\newtheorem{theorem}{Theorem}[section]
\newtheorem{corollary}[theorem]{Corollary}
\newtheorem{lemma}[theorem]{Lemma}
\newtheorem{proposition}[theorem]{Proposition}
\theoremstyle{definition}
\newtheorem{definition}[theorem]{Definition}
\newtheorem{remark}{Remark}
\def\N{\mathbb{N}}
\def\R{\mathbb{R}}
\let\e=\varepsilon
\let\vp=\varphi
\let\t=\tilde
\let\ol=\overline
\let\mc=\mathcal
\def\ex{\exists\;}
\def\O{\Omega}
\def\o{\omega}
\def\L{\mathcal{L}}
\DeclareMathOperator*{\esssup}{ess\,sup}
\DeclareMathOperator*{\essinf}{ess{\,}inf}
\newcommand{\lm}[1]{\left\lfloor#1\right\rfloor}
\newcommand{\lgr}[1]{%
    \left\lfloor\hspace{-.33em}\left\lfloor #1\right\rfloor\hspace{-.33em}\right\rfloor%
    }
\newcommand{\ggr}[1]{%
    \left\lceil\hspace{-.33em}\left\lceil #1\right\rceil\hspace{-.33em}\right\rceil%
    }
\newcommand{\su}[2]{\genfrac{}{}{0pt}{}{#1}{#2}}
\def\norma#1{\|#1\|_\infty}
\def\thm#1{Theorem \ref{thm:#1}}
\def\seq#1{(#1_n)_{n\in\N}}
\def\limn{\lim_{n\to+\infty}}
\def\gpe{generalized principal eigenvalue}
\def\MP{maximum principle}
\newenvironment{formula}[1]{\begin{equation}\label{#1}}
                       {\end{equation}\noindent}
\def\Fi#1{\begin{formula}{#1}}
\def\Ff{\end{formula}\noindent}
\begin{document}
\title{\bf Generalized principal eigenvalues for parabolic operators in bounded domains}

\author{Henri {\sc Berestycki}\,\footnote{(a) Department of Mathematics, University of Maryland, MD 20742, USA,
(b) EHESS, CAMS, 54 boulevard Raspail, F-75006 Paris, France,
 (c) Institute for Advanced Study,  Hong Kong University of Science and Technology, Hong Kong}\ , \
 Grégoire {\sc Nadin}\,\footnote{Institut Denis Poisson, Université d'Orléans, Université de Tours, CNRS,
 Orléans, France}\ , \
 Luca {\sc Rossi}\,\footnote{Istituto ``G.~Castelnuovo'', Sapienza Università di Roma, Rome, Italy}}

\maketitle

\setstcolor{red}



\begin{abstract}
We introduce here new generalized principal eigenvalues for linear parabolic operators with heterogeneous coefficients in space and time. We consider a bounded spatial domain and an unbounded time interval $I$ : $I=\R,\ \R^+$ or $\R^-$, and operators with coefficients having  a fairly general dependence on space and time. 
The notions we introduce rely on the parabolic maximum principle and extend
some earlier definitions introduced for elliptic operators~\cite{BerestyckiHamelRossi, BerNirVaradhan}. 

We first show that these eigenvalues hold the key to understanding the large time behavior and entire solutions of heterogeneous Fisher-KPP type equations. 
We then describe the relation of these quantities with principal Floquet bundles for parabolic operators which provides further characterizations of the principal eigenvalues. These allow us to derive monotonicity properties and comparisons between
generalized principal eigenvalues, as well as perturbation results and further properties involving limit operators. We show that the sign of these eigenvalues encodes 
different versions of the maximum principle for parabolic operators. 
Lastly, we~explicitly compute the generalized principal eigenvalues for several classes of operators 
such as spatial-independent, periodic, almost periodic, uniquely ergodic or random stationary 
ergodic coefficients. 
\end{abstract}

\maketitle

\noindent {\bf Key-words:} generalized principal eigenvalues, linear parabolic operator, principal Floquet bundles, semilinear parabolic equation, parabolic maximum principle.
\smallskip

\noindent {\bf AMS classification.} 35B40, 35B50, 35K10, 35K57, 35P05, 47A75.


\tableofcontents

\section{State of the art}

We investigate the notion of the
generalized principal eigenvalue for a linear parabolic operator. 
We consider uniformly parabolic operators of the form
$$P=\partial_t-a_{ij}(t,x)\partial_{ij}-b_i(t,x)\partial_i-c(t,x),$$
defined for $t\in I$ and $x\in\O$, where $I$ is an unbounded interval
and $\O$ is a bounded domain of $\R^N$, $N\geq1$.
The partial derivatives $\partial_i$ refer to the spatial variable $x$ only,
and we adopt the summation convention. 
We focus on the Dirichlet boundary condition. Most of the results derived in this paper should hold
true, when properly adapted, for Neumann, Robin or periodic boundary conditions, but we leave such extensions as open problems.


Throughout the paper, we will assume the following hypotheses: 

\begin{equation} \label{hyp:reg}\tag{H} 
\left\{\begin{array}{c}
\O \hbox{ is a bounded Lipschitz domain},\\
\\
a_{ij},  b_{i} , c \in L^\infty(\R\times \O) \hbox{ and } a_{ij}\in \mathcal{C}^0 (\R\times \O) \hbox{ for all } i,j=1,...,N,\\
\\
\exists \alpha>0 \ | \ \forall \xi \in \R^{N}, \quad (t,x)\in \R\times \O, \quad 
a_{ij}(t,x) \xi_{i}\xi_{j}\geq \alpha |\xi|^{2}. \\
\end{array}
\right.
\end{equation}

\bigskip

We choose in this paper to consider operators in non-divergence form. Many of our results could easily be extended to operators in  divergence form. We will sometimes assume that $\nabla a_{ij}\in L^\infty(\R\times \O)$ in order to use previous results  for operators in divergence form (see Section \ref{sec:perturbation} for example).

\subsection{Floquet and Lyapounov exponents for time-periodic and almost periodic operators}

When the coefficients of the operator are time-periodic (say with period $T$), the usual notion of eigenvalue is derived from that of Floquet exponents. That is, we consider the principal eigenvalue $r(P)$ of the Poincar\'e map  $Q_T:u_0(\cdot)\mapsto u(T,\cdot)$, where $u$ is the solution of the Dirichlet problem
\begin{equation}\label{Pu=0+} 
    \begin{cases}
	 Pu=0& \text{in }(0,+\infty)\times\O\\
	 u=0& \text{on }(0,+\infty)\times\partial\O,
    \end{cases}
\end{equation}
emerging from the initial datum $u_0$.
It is easily seen that one can write $r(P)=e^{-\lambda (P)T}$, where $\lambda (P)$ is the principal eigenvalue of the operator $P$, that is, the unique $\lambda$ associated with some positive function $\phi$ on $(0,T)\times \O$, with $\phi=0$ on $(0,T)\times \partial \O$, $\phi(\cdot +T,\cdot)\equiv \phi$, and $P\phi=\lambda \phi$ on $(0,T)\times \O$. 
Its existence, together with the simplicity, follows from the Krein-Rutman theory.

A comprehensive study of these eigenvalue has been provided by Hess \cite{Hess}. We refer to \cite{Nadinper} for some extensions to space-periodic coefficients. 

For almost periodic or random stationary ergodic coefficients with respect to $t$, the Poincar\'e map~$Q_T$ is not well-defined and the inverse of the operator $P$ is not compact anymore. 
This prevents the application of the Krein-Rutman theory to obtain
 the principal eigenvalue. However, one can still prove the existence of a finite quantity $\lambda$ which plays the role of the Floquet exponent, i.e.~such that any solution of problem \eqref{Pu=0+} emerging from a positive initial datum satisfies
$$\lim_{t\to +\infty}\frac{1}{t}\ln \|u(t,\cdot)\|_{L^\infty (\O)}=-\lambda.$$
This indeed means that positive solutions behave like
$e^{-\lambda t}$ for large $t$.
The above limit holds with some uniformity with respect to translations of the coefficients 
in almost periodic media~\cite{SYbook}, and for almost every event in the case of random stationary ergodic coefficients \cite{MSbook}. Indeed, the $L^\infty$ norm could be replaced by other norms or pointwise convergence using classical parabolic estimates. In the almost periodic setting, the quantity~$\lambda$ is called a Lyapounov exponent. 
Moreover, for sign-changing initial data, one can prove some exponential separation (see \cite{MSbook, SYbook}), extending earlier spectral gap results that were derived in the periodic framework. 

It is easily seen that the above convergence property 
cannot hold in general when the dependence with respect to~$t$ of the coefficients is arbitrary. Among other things, we will show in this paper that for operators
with general time/space heterogeneity of the coefficients, the semi limits $$\limsup_{t\to +\infty}\frac{1}{t}\ln \|u(t,\cdot)\|_{L^\infty (\O)}
\quad\text{ and }\quad
\liminf_{t\to +\infty}\frac{1}{t}\ln \|u(t,\cdot)\|_{L^\infty (\O)},$$ 
are characterized by two distinct notions of generalized principal eigenvalue,
cf.\ Theorem~\ref{thm:caraceta} below.

\subsection{Generalized principal eigenvalues for elliptic operators}

The notion of generalized principal eigenvalues for elliptic operators has first been introduced by Berestycki, Nirenberg and Varadhan \cite{BerNirVaradhan} 
in the case of bounded, non-smooth domains
(see also Agmon~\cite{A1} and Nussbaum and Pinchover \cite{NP} for an earlier related notion)
and then extended by Berestycki, Hamel and 
Rossi~\cite{BerestyckiHamelRossi}
to unbounded domains. Consider an elliptic operator 
\Fi{L}
\mathcal{L}=a_{ij}(x)\partial_{ij}+b_i(x)\partial_i+c(x), \qquad x\in\O
\Ff
(with the summation convention on $i,j$).
The  notion of generalized principal eigenvalue of elliptic operators of \cite{BerNirVaradhan} reads
$$\lambda(-\mathcal{L}):=\sup\{\lambda\ :\ \ex\phi>0 \text{ in }\O,\ -\mathcal{L}\phi\geq\lambda\phi\text{ in }\O\}.$$
This notion extends the relevant properties of the classical
principal eigenvalue to the case where $\O$ is non-smooth.
This is no longer the case when the domain $\O$ is unbounded.
In order to deal with the unbounded case, an alternative notion was introduced in \cite{BHR1,BerestyckiHamelRossi}:
$$\lambda'(-\mathcal{L}):=\inf\{\lambda\ :\ \ex\phi>0 \text{ in }\O,
\ \phi=0 \text{ on }\partial\O,
\ -\mathcal{L}\phi\leq\lambda\phi\text{ in }\O\},$$
and still another one in \cite{BR4}:
$$\lambda''(-\mathcal{L}):=\sup\{\lambda\ :\ \ex\phi, \ \inf_\O \phi>0 \text{ in }\O,\ -\mathcal{L}\phi\geq\lambda\phi\text{ in }\O\}.$$
These notions incorporate in their definition the Dirichlet boundary condition.
For their extension to Neumann or oblique derivative boundary conditions we refer to~\cite{R-oblique}.

It has been proved in \cite{BerestyckiHamelRossi, BR4} that, for an arbitrary
elliptic operator, there holds $$\lambda''(-\mathcal{L})\leq \lambda'(-\mathcal{L})\leq \lambda(-\mathcal{L}).$$
It may happen that $\lambda'(-\mathcal{L})<\lambda(-\mathcal{L})$. But equality holds if 
the operator is self-adjoint. 
Further relevant cases of equalities between these quantities are established in
\cite[Theorem 1.9]{BR4}.

These eigenvalues provide both necessary and sufficient conditions for the validity
of the maximum principle. They also play a central role in 
the study of heterogeneous Fisher-KPP type equations, as they allow to characterize existence, positivity, and attractivity of the stationary solutions, see \cite{BerestyckiHamelRossi, BR4} and the discussion
in the next section. 

In the present paper, we want to find a notion of 
principal eigenvalue that fits for a parabolic operator $P$ on a bounded, regular, spatial domain $\O$.
Here we face the same issue of lack of compactness
as in the elliptic case when $\O$ is unbounded, because we
consider unbounded intervals of times.
As in the elliptic case, this leads us to introduce several notions of generalized principal eigenvalue.


\subsection{A first notion of generalized principal eigenvalue}\label{sec:hyp}

Let us start by introducing a first notion of 
{\em generalized principal eigenvalue} for the parabolic operator $P$ on a time interval $I$ that can be one of the following:
$$\R,\qquad \R^+:=(0,+\infty),\qquad \R^-:=(-\infty,0).$$
We consider a fixed, open ball $B\Subset \O$, i.e.\ such that $\bar B\subset \O$.
Then we define
\begin{align*}
\mu_{b,p}(I):=\inf\{&\lambda\ :\ \ex\phi>0\text{ in }I\times\O, \ \phi=0\text{ on }I\times\partial\O,\\
&\ \inf_{I\times B}\phi>0,\ \sup_{I\times\O}\phi<+\infty,\ 
P\phi\leq\lambda\phi\text{ in }I\times\O\},
\end{align*}

The indices ``$b,p$'' refer to the requirements that the ``test-functions'' $\phi$
are bounded and strictly positive.
The functions $\phi$ are also assumed to belong to the Sobolev space 
$W^{1,2}_{p,\,\mathrm{loc}}(I\times\ol\O)$ for
some $p>N+1$, that is, 
$\phi, \partial_{t}\phi, \partial_i \phi, \partial_{ij}\phi \in L^{p}(J\times \O)$
for any compact interval $J\subset I$,
and the differential inequalities are understood to hold in the
strong sense, i.e.\ almost everywhere.
As a consequence, the functions $\phi$ are continuous up to the boundary of $\O$, by Morrey's embedding.



It is a priori not obvious that the the generalized principal eigenvalue 
$\mu_{b,p}(I)$ is well defined and finite.
This is a fact that we show as a byproduct of our main results in Proposition~\ref{pro:basic} below.
We will further see that $\mu_{b,p}(I)$
does not depend on the choice of the ball $B$. 
Actually, its definition would not change if one requires 
the stronger condition 
$\inf_{I\times K}\phi>0$ for every compact set $K\subset\O$,
or the weaker one
$\inf_{t\in I}\|\phi(t,\.)\|_{L^\infty(\O)}>0$
(see Remark \ref{rk:ball}$(\ref{rk:ballB})$ below). 

It is natural to wonder whether there exists a 
{\em generalized principal eigenfunction} associated 
with $\mu_{b,p}(I)$, i.e., a positive eigenfunction fulfilling the requirements 
$\inf_{I\times B}\phi>0$, $\sup_{I\times\O}\phi<+\infty$.
We will see that in general this is not the case (see \ Remark \ref{rmk:differ} below).

In Section \ref{sec:other}, we discuss other possible definitions of generalized eigenvalues playing with the requirements imposed on the test functions or with sup rather than inf. We will see that they are relevant in particular
for the study of linear Dirichlet problems.



\subsection{Applications to semilinear problems}\label{sec:semilinear}

In order to illustrate the use of the new notion $\mu_{b,p}$
of generalized principal eigenvalue, we now state some 
applications to the study of  the semilinear parabolic~problem
\Fi{RD}
\begin{cases}
\partial_t u-a_{ij}(t,x)\partial_{ij} u-b_i(t,x)\partial_i u= f(t,x,u), &
t\in I,\ x\in\O,\\
u(t,x)=0,  & t\in I,\ x\in\partial \O,
\end{cases}
\Ff
where $I$ will be either $\R^\pm$ or $\R$.
We will always assume here that the nonlinear term $f=f(t,x,s)$ is of class 
$\mathcal{C}^{1}$ with respect to $s$, uniformly with respect to 
$(t,x) \in I\times \O$, 
with bounded derivative $f_s'$.
We further assume that 
\Fi{f=0}
\forall(t,x)\in I\times\O,\quad
f(t,x,0)=0,
\Ff
and that there exists a constant $M>0$ such that
\Fi{f<0}
\forall(t,x)\in I\times\O, \ \forall s\geq M,\quad
f(t,x,s)\leq 0.
\Ff
Lastly, we will consider the KPP hypothesis:
\begin{equation}\label{hyp:fKPP} 
\forall s>0,\ x\in\O, \quad \inf_{t\in I}
\big(f_s'(t,x,0)s-f(t,x,s)\big)>0.\end{equation}


For the Fisher KPP equation without temporal dependence on a bounded, non-smooth domain~$\O$, 
\cite[Theorems 1.1 and 2.2]{BerNirVaradhan} imply that there exists a positive 
stationary solution if and only if the eigenvalue $\lambda(-\L)$ of
the linearized elliptic operator $\L$ is negative.
This fact is extended to unbounded smooth domains in \cite{BerestyckiHamelRossi}, using also the notion $\lambda'(-\L)$.
The eigenvalue $\lambda(-\L)$ also allows one to derive conditions for the uniqueness  and 
the stability of this stationary state.

Keeping in mind these results, we aim to derive here analogous 
properties for the reaction-diffusion problem \eqref{RD} 
on a bounded
domain $\O$, under general space/time dependence of the coefficients,
based on criteria involving generalized principal eigenvalues. 

A first result concerns
the long-time behavior of solutions to the Cauchy problem. It turns out that the quantity $\mu_{b,p}(\R^+)$
encodes the {\em persistence property}, that is, 
the fact that solutions preserve a positive mass as $t\to+\infty$. This information is particularly relevant in the context of population dynamics models.

\begin{proposition}\label{prop:initdatum}
Assume that $I=\R^+$, that $\O$ is of class $\mathcal{C}^{2,\alpha}$, for some $0<\alpha<1$, 
and that $f$ satisfies \eqref{f=0}-\eqref{hyp:fKPP}.
Let $u$ be the solution of problem~\eqref{RD} 
with a continuous, nonnegative initial datum $u_{0}\not\equiv 0$.
Let $\mu_{b,p}(\R^+)$ be the eigenvalue associated with
the linearized operator around $0$, namely
\Fi{opP}
P:=\partial_t -a_{ij}(t,x)\partial_{ij} -b_i(t,x)\partial_i - f_{s}'(t,x,0).
\Ff
Then the {\em persistence property} 
\begin{equation}\label{eq:persistence}
\liminf_{t\to +\infty}u(t,x)>0 \quad \hbox{ for all } x\in \O,
\end{equation}
holds if and only if $\mu_{b,p}(\R^+)<0$. 
\end{proposition}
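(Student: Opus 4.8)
The plan is to prove the two implications separately, using the KPP structure to compare $u$ with solutions of the linearized problem both from above and below.

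\medskip

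\noindent\textbf{Necessity of $\mu_{b,p}(\R^+)<0$.} Suppose for contradiction that $\mu_{b,p}(\R^+)\ge 0$. By definition of $\mu_{b,p}$ as an infimum, for every $\lambda>\mu_{b,p}(\R^+)$ there is a test function $\phi>0$, bounded, with $\inf_{\R^+\times B}\phi>0$, $\phi=0$ on the lateral boundary, and $P\phi\le\lambda\phi$. In particular, for any $\lambda>0$ we have such a supersolution. Since $f(t,x,s)\le f_s'(t,x,0)s$ by the KPP hypothesis \eqref{hyp:fKPP} (which forces $f$ to lie below its tangent line at $0$), a large multiple $C\phi$ with $C$ large enough to dominate $u_0$ is a supersolution of \eqref{RD} perturbed by the $\lambda\phi$ term; more precisely $\partial_t(C\phi) - a_{ij}\partial_{ij}(C\phi) - b_i\partial_i(C\phi) \ge f_s'(t,x,0)(C\phi) - \lambda C\phi \ge f(t,x,C\phi) - \lambda C\phi$. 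Comparing $u$ with $e^{\lambda t}C\phi$ (which then is a genuine supersolution, absorbing the $\lambda$) via the parabolic comparison principle on $\R^+\times\O$ — using that $\O$ is $\mathcal C^{2,\alpha}$ so the Hopf/boundary behaviour is controlled — we would get $u(t,x)\le e^{\lambda t}C\phi(t,x)$. This is not yet a contradiction since $e^{\lambda t}\to\infty$. The cleaner route is: take $\lambda=\mu_{b,p}(\R^+)+\e\ge \e/2>0$ won't help; instead one should show that $\mu_{b,p}(\R^+)\ge 0$ forces decay. I expect the correct argument uses the characterization (to be supplied later in the paper, e.g.\ Theorem~\ref{thm:caraceta}) that $\mu_{b,p}(\R^+)\ge0$ implies $\limsup_{t\to\infty}\frac1t\ln\|v(t,\cdot)\|_{L^\infty}\ge 0$ is the wrong sign — rather, one builds a bounded positive supersolution at level $\lambda=0$: since $\mu_{b,p}(\R^+)\le 0$ would be needed for persistence, assume $\mu_{b,p}(\R^+)\ge 0$ and pick $\lambda>0$ arbitrarily small with test function $\phi_\lambda$; then $e^{-\lambda t}\phi_\lambda$ is... not monotone in the right way.

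\medskip

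Let me restate the strategy more carefully. For \emph{necessity}, I would argue the contrapositive: if $\mu_{b,p}(\R^+)\ge 0$, then $\mu_{b,p}(\R^+)=0$ is the borderline, and for each $\lambda>0$ there is a bounded positive supersolution $\phi$ of $P\phi\le\lambda\phi$ with $\inf_B\phi>0$. Choose $C$ with $Cu_0\le \phi$ won't make sense dimensionally; instead normalize $\sup\phi$ and use $C\phi\ge u_0$. Set $\bar u(t,x):=C\phi(t,x)$. Then $P\bar u\le \lambda\bar u$ and by KPP $f(t,x,\bar u)\le f_s'(t,x,0)\bar u$, so $\bar u$ is a supersolution of the linear problem with potential $f_s'+\lambda$; since $u$ solves \eqref{RD} with $f(t,x,u)\le f_s'(t,x,0)u\le f_s'(t,x,0)u$, we need the comparison for the \emph{nonlinear} equation: $\partial_t\bar u - \mathcal M\bar u \ge f_s'(t,x,0)\bar u - \lambda\bar u$ where $\mathcal M=a_{ij}\partial_{ij}+b_i\partial_i$; this is $\ge f(t,x,\bar u)$ only if $-\lambda\bar u\ge f(t,x,\bar u)-f_s'(t,x,0)\bar u$, i.e.\ only if $\lambda\bar u\le f_s'\bar u - f(t,x,\bar u)$, which by \eqref{hyp:fKPP} is $\ge$ a positive quantity but we'd need it $\ge \lambda\bar u$ which fails for small $\bar u$. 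So the bare supersolution fails near extinction. The fix is to compare with $\bar u$ only as a \emph{barrier from above} for the linearized flow and separately run a sub-solution argument: since $u\le$ (something bounded) by \eqref{f<0}, $u$ stays bounded; then $\liminf u(t,x)=0$ somewhere would, via the strong maximum principle and the KPP convexity, propagate. Honestly the clean proof certainly goes through the \emph{Floquet bundle / eigenfunction} material advertised in the abstract; assuming that, necessity reduces to: persistence $\Rightarrow$ a nontrivial bounded positive entire-in-forward-time subsolution of the linearization exists $\Rightarrow$ $\mu_{b,p}(\R^+)<0$ by testing against it.

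\medskip

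\noindent\textbf{Sufficiency of $\mu_{b,p}(\R^+)<0$.} Pick $\lambda$ with $\mu_{b,p}(\R^+)<\lambda<0$. By definition there is $\phi>0$ bounded, $\phi=0$ on $\R^+\times\partial\O$, $m:=\inf_{\R^+\times B}\phi>0$, and $P\phi\le\lambda\phi$. For small $\delta>0$ the function $\underline u:=\delta\phi$ satisfies $\partial_t\underline u - \mathcal M\underline u = \delta P\phi + f_s'(t,x,0)\delta\phi \le \lambda\delta\phi + f_s'\delta\phi$; we want $\partial_t\underline u - \mathcal M\underline u \le f(t,x,\underline u)$, i.e.\ $f_s'(t,x,0)\delta\phi - f(t,x,\delta\phi)\le -\lambda\delta\phi$; by \eqref{hyp:fKPP} the left side is positive, and since $f\in\mathcal C^1$ in $s$ it is $O(\delta^2)$ locally uniformly, namely $\le \omega(\delta)\delta\phi$ for a modulus $\omega(\delta)\to0$; since $-\lambda>0$ is fixed, choosing $\delta$ small enough gives $\omega(\delta)\le-\lambda$, so $\underline u=\delta\phi$ is a bounded subsolution of \eqref{RD}. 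Next, because $u_0\not\equiv0$ is continuous and nonnegative, by the strong parabolic maximum principle $u(1,\cdot)>0$ in $\O$, so $u(1,\cdot)\ge \delta'\phi(1,\cdot)$ for some small $\delta'>0$ after possibly shrinking $\delta$ (use that $\phi(1,\cdot)$ is bounded and vanishes on $\partial\O$ at a controlled rate — here $\mathcal C^{2,\alpha}$ regularity and Hopf's lemma for $u$ are essential to dominate $\phi$ near $\partial\O$). Then the comparison principle on $[1,\infty)\times\O$ gives $u(t,x)\ge\delta\phi(t,x)$ for all $t\ge1$, hence $\liminf_{t\to\infty}u(t,x)\ge\delta\,\liminf\phi(t,x)$, which is $\ge\delta m>0$ for $x\in B$, and for general $x\in\O$ one propagates positivity by a further comparison using a fixed sub-solution on a slightly larger ball, or by repeating the argument with the ball $B$ replaced by any $B'\Subset\O$ containing $x$ (legitimate since $\mu_{b,p}$ is independent of the ball).

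\medskip

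\noindent\textbf{Main obstacle.} The genuinely delicate point is the \emph{necessity} direction: turning ``persistence'' (a statement about $\liminf u$) into the existence of an admissible test function for $\mu_{b,p}(\R^+)$ with a \emph{strictly negative} $\lambda$. One cannot simply use $u$ itself: $u$ need not be bounded below away from $\partial\O$ uniformly in a way that yields $P\phi\le\lambda\phi$ with $\lambda<0$, and $f(t,x,u)$ is only $\le f_s'(t,x,0)u$, giving $P u = f(t,x,u)-f_s'u \le 0$, i.e.\ $u$ is at best a subsolution at level $\lambda=0$, not $\lambda<0$. Extracting the strict negativity requires exploiting \eqref{hyp:fKPP} quantitatively together with the fact that $u$ stays bounded \emph{below} by a positive constant on $B$ for large $t$: then $f_s'(t,x,0)u - f(t,x,u)\ge c_0>0$ on $\R^+\times B$ (eventually), which downgrades the supersolution inequality to $Pu\le -c_0$ on $B$, and one must massage this into $Pu\le \lambda u$ for some $\lambda<0$ after correcting $u$ near $\partial\O$ (where $u\to0$) by a suitable barrier and restricting to a forward time interval $[T,\infty)$, then translating to $\R^+$ using the interval-independence of the definition or the characterizations established later. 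I would therefore structure the necessity proof so that it invokes the forthcoming characterization of $\mu_{b,p}(\R^+)$ via the sign of the long-time exponential rate (Theorem~\ref{thm:caraceta}) and the monotonicity/perturbation results, reducing it to the clean assertion: if $u$ persists then the linearized flow cannot decay exponentially, whence $\mu_{b,p}(\R^+)\le0$, and strictness follows from the KPP gap.
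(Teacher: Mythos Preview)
Your sufficiency argument is essentially the paper's: scale down a test function $\phi$ for some $\lambda<0$ to a subsolution $\delta\phi$ of \eqref{RD} via the $C^1$ regularity of $f$, compare at time $1$, then run forward and use Harnack to pass from $B$ to any $x\in\O$. The only technical divergence is in establishing $u(1,\cdot)\ge\delta'\phi(1,\cdot)$: you invoke Hopf's lemma for $u$ against a Lipschitz bound on $\phi(1,\cdot)$, whereas the paper, not wishing to rely on boundary regularity of the abstract test function, linearizes the equation for $u$ along $u$ itself (zero-order coefficient $c=f(t,x,u)/u$) and applies the quotient Harnack inequality of Theorem~\ref{thm:Harnack} to compare $u(1,\cdot)$ with the solution $v$ of that linear problem started from $\phi(0,\cdot)$, then separately shows $v\ge m''\phi$ near $\partial\O$. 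Under the $C^{2,\alpha}$ hypothesis your route should also go through.

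For necessity, your early attempts (building supersolutions, invoking $e^{\lambda t}\phi$) are going the wrong way and you rightly abandon them. Your final paragraph lands on the correct strategy --- $Pu\le0$ gives $\mu_{b,p}(\R^+)\le0$, and the strict KPP gap plus persistence should buy a strictly negative $\lambda$ after a boundary correction --- but you stop short of the construction, and the suggestion to fall back on Theorem~\ref{thm:caraceta} is a red herring (that theorem concerns $\mu_p,\lambda_b$, not $\mu_{b,p}$, and the paper does not use it here). The missing piece is concrete: set
\[
\phi(t,x):=u(t,x)+\e\,\vp^2(x),\qquad -\Delta\vp=1\text{ in }\O,\ \ \vp=0\text{ on }\partial\O.
\]
One computes $P(\vp^2)=-2a_{ij}\partial_i\vp\partial_j\vp + (\text{terms vanishing on }\partial\O)$, so by uniform ellipticity and Hopf's lemma ($|\nabla\vp|>0$ on $\partial\O$; this is where the $C^{2,\alpha}$ regularity of $\O$ enters) there is a collar $\O\setminus K$ on which $P(\vp^2)\le -h<0$, hence $P\phi\le Pu-\e h\le -\e h$ there. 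On the compact $K$, persistence and Harnack give $u\ge\delta>0$ for $t\ge T$, so the strict inequality in \eqref{hyp:fKPP} yields $Pu=f(t,x,u)-f_s'(t,x,0)u\le -\zeta<0$, and for $\e$ small $P\phi\le-\zeta/2$ there. Since $u$ (hence $\phi$) is bounded by \eqref{f<0} and comparison, one obtains $P\phi\le\lambda\phi$ on $[T,+\infty)\times\O$ for some $\lambda<0$, whence $\mu_{b,p}((T,+\infty))<0$, and this equals $\mu_{b,p}(\R^+)$ by the translation invariance recorded in Remark~\ref{rk:ball}$(\ref{rk:IT})$.
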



Next, we  focus on entire (i.e.\ time-global) solutions to
the semilinear Dirichlet problem.
Namely, we consider~\eqref{RD} with $I=\R$.
We are concerned with solutions that are uniformly positive away from the boundary of $\O$, that is,
\Fi{infK}
\forall K\Subset\O,\quad\inf_{I\times K}u>0.
\Ff
The following result shows that the existence of these solutions
is completely characterized by the sign of~$\mu_{b,p}(\R)$, as in the temporal-independent case.

\begin{proposition}\label{pro:RD-existence} 
	Assume that $I=\R$, that $\O$ is of class $\mathcal{C}^{2,\alpha}$, for some $0<\alpha<1$,
    and that $f$ satisfies \eqref{f=0}-\eqref{hyp:fKPP}.
 Let $\mu_{b,p}(\R)$ be the eigenvalue associated with
the linearized operator $P$ given by \eqref{opP}.
Then the problem \eqref{RD}
admits a bounded entire solution satisfying \eqref{infK} if and only if $\mu_{b,p}(\R)<0$.
\end{proposition}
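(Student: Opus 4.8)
The plan is to prove the two implications separately, exploiting the KPP structure \eqref{hyp:fKPP} to relate the semilinear problem to the linearized operator $P$ in \eqref{opP}.

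\medskip

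\textbf{Necessity ($\mu_{b,p}(\R)<0$ is necessary).} Suppose $u$ is a bounded entire solution of \eqref{RD} satisfying \eqref{infK}. First I would observe that $u>0$ in $\R\times\O$: indeed $u\ge 0$ by the strong maximum principle applied to the equation $\partial_t u - a_{ij}\partial_{ij}u - b_i\partial_i u - c(t,x)u = 0$ where $c(t,x):=\int_0^1 f_s'(t,x,su(t,x))\,ds$ is bounded, so $u$ is a nonnegative supersolution of a linear parabolic equation and either vanishes identically or is strictly positive; since $u$ satisfies \eqref{infK} it is not identically zero. Moreover $u=0$ on $\R\times\partial\O$ and $\sup_{\R\times\O}u<+\infty$ by hypothesis. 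Now the KPP condition \eqref{hyp:fKPP} gives $f(t,x,u)\le f_s'(t,x,0)u$, hence
\[
P u = \partial_t u - a_{ij}\partial_{ij}u - b_i\partial_i u - f_s'(t,x,0)u \le 0 \le 0\cdot u \quad\text{in }\R\times\O.
\]
Thus $u$ is an admissible test function (after checking the required $W^{1,2}_{p,\mathrm{loc}}$ regularity, which follows from interior and boundary parabolic $L^p$ estimates since the coefficients are bounded and $\O$ is $\mathcal{C}^{2,\alpha}$) in the definition of $\mu_{b,p}(\R)$ with $\lambda=0$; since $\overline B\subset\O$, \eqref{infK} gives $\inf_{\R\times B}u>0$. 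Therefore $\mu_{b,p}(\R)\le 0$. To upgrade this to a strict inequality I would use the strict KPP gap: by \eqref{hyp:fKPP}, on the compact set $\overline B$ and for $s$ ranging in the compact interval $[\,\inf_{\R\times B}u,\ \sup_{\R\times\O}u\,]$ there is $\delta>0$ with $f_s'(t,x,0)u - f(t,x,u)\ge \delta$ for $(t,x)\in\R\times B$, whence $Pu\le -\delta \le -\delta' u$ in $\R\times B$ with $\delta'=\delta/\sup u>0$. One then runs a standard perturbation argument: since $Pu\le 0$ everywhere and $Pu\le-\delta$ on $\R\times B$, modifying $u$ near $B$ (or invoking the Harnack-type comparison for the principal Floquet bundle developed earlier in the paper) yields a test function witnessing $\mu_{b,p}(\R)<0$. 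Alternatively, and more cleanly, I would appeal to the characterization of $\mu_{b,p}(\R)$ via Proposition~\ref{pro:basic} / the Floquet bundle results, which say $\mu_{b,p}(\R)\le 0$ already forces the persistence behaviour; combined with the strict sign of the KPP defect this excludes $\mu_{b,p}(\R)=0$. The main subtlety here is precisely this strictness, i.e.\ ruling out the borderline case $\mu_{b,p}(\R)=0$.

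\medskip

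\textbf{Sufficiency ($\mu_{b,p}(\R)<0$ implies existence).} Assume $\mu_{b,p}(\R)<0$ and fix $\lambda\in(\mu_{b,p}(\R),0)$, so that there is a test function $\overline\phi>0$, bounded, with $\inf_{\R\times B}\overline\phi>0$, $\overline\phi=0$ on $\R\times\partial\O$, and $P\overline\phi\le\lambda\overline\phi$. The idea is a standard monotone iteration / sub- and super-solution scheme between a small subsolution built from $\overline\phi$ and the large constant supersolution $\overline u\equiv M$ from \eqref{f<0}. For the supersolution: since $f(t,x,M)\le 0$, the constant $M$ is a (generalized) supersolution of \eqref{RD}. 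For the subsolution: set $\underline u_\varepsilon := \varepsilon\overline\phi$. Using \eqref{hyp:fKPP} in the sharp form $f(t,x,s)\ge f_s'(t,x,0)s - \omega(s)$ where $\omega(s)=o(s)$ as $s\to0^+$ uniformly in $(t,x)$ (a consequence of $f\in\mathcal{C}^1$ in $s$ with $f(t,x,0)=0$), we get for small $\varepsilon$
\[
\partial_t\underline u_\varepsilon - a_{ij}\partial_{ij}\underline u_\varepsilon - b_i\partial_i\underline u_\varepsilon - f(t,x,\underline u_\varepsilon)
\le \varepsilon(P\overline\phi) + \omega(\varepsilon\overline\phi) \le \varepsilon\lambda\overline\phi + \omega(\varepsilon\|\overline\phi\|_\infty)\cdot\tfrac{\overline\phi}{\inf_{\R\times B}\overline\phi}\ \text{ on }\R\times B,
\]
which is $\le 0$ for $\varepsilon$ small because $\lambda<0$; off $B$ one uses $f\ge 0$ near $0$ fails in general, so instead one should either take $B$ large (the definition of $\mu_{b,p}$ is independent of $B$, so replace $B$ by a large ball, or rather use the $\inf$ over compacts version from Remark~\ref{rk:ball}) or argue that $P\overline\phi\le\lambda\overline\phi<0$ together with boundedness of $f_s'$ absorbs the error term globally for $\varepsilon$ small. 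Once $0<\underline u_\varepsilon\le M$ is a subsolution and $M$ a supersolution, I would construct the entire solution by a limiting procedure: solve \eqref{RD} on $(-n,+\infty)\times\O$ with initial datum $\underline u_\varepsilon(-n,\cdot)$ at time $-n$; parabolic comparison gives $\underline u_\varepsilon\le u_n\le M$; monotonicity in $n$ (by comparison, since $u_{n+1}\ge\underline u_\varepsilon=u_n$ at time $-n$) plus parabolic $L^p$ and Schauder estimates give a locally uniform limit $u:=\lim_n u_n$, an entire solution with $\underline u_\varepsilon\le u\le M$. Finally \eqref{infK} follows from $u\ge\underline u_\varepsilon=\varepsilon\overline\phi$ together with the Harnack inequality: for any $K\Subset\O$, connecting $K$ to $B$ by a chain of balls and using that $u>0$ is a solution of a linear parabolic equation with bounded coefficients, $\inf_{\R\times K}u>0$.

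\medskip

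\textbf{Main obstacle.} I expect the delicate point of the sufficiency direction to be the \emph{global} construction of the subsolution: near $\partial\O$ the KPP lower bound does not by itself make $\varepsilon\overline\phi$ a subsolution, because $\omega(\varepsilon\overline\phi)$ must be controlled by $\varepsilon\lambda\overline\phi$ and both degenerate at the boundary. The cleanest fix is to use that $\lambda<0$ so $P\overline\phi\le\lambda\overline\phi$ with $\overline\phi$ bounded: writing $f(t,x,\varepsilon\overline\phi) - f_s'(t,x,0)\varepsilon\overline\phi = \varepsilon\overline\phi\,g(t,x,\varepsilon\overline\phi)$ with $g(t,x,s)\to0$ as $s\to0$ uniformly, and $|g|\le\|\varepsilon\overline\phi\|_\infty$-small, we get the subsolution inequality $\le \varepsilon\overline\phi(\lambda + \sup|g|)\le 0$ on \emph{all} of $\R\times\O$ once $\varepsilon$ is small enough that $\sup_{(t,x),\,0\le s\le\varepsilon\|\overline\phi\|_\infty}|g(t,x,s)| < -\lambda$. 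For the necessity direction the analogous obstacle is ruling out $\mu_{b,p}(\R)=0$, handled by the strict positivity in \eqref{hyp:fKPP} on compacts as sketched above, or by quoting the equivalence (established earlier via the Floquet bundle) between $\mu_{b,p}(\R)<0$ and the generalized principal eigenfunction giving genuine exponential decay, which a uniformly-positive-on-compacts entire solution cannot exhibit.
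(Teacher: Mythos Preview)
Your sufficiency argument is essentially the same as the paper's: build a small multiple $m\phi$ of the test function as a global subsolution (using $f(t,x,s)=f_s'(t,x,0)s+o(s)$ uniformly, so that $f(t,x,m\phi)\geq (f_s'(t,x,0)+\lambda)m\phi$ for $m$ small), use the constant $M$ as a supersolution, solve on $(-n,+\infty)$ and pass to the limit. The paper initializes at $M$ rather than at the subsolution, but both work; your ``cleanest fix'' paragraph is exactly the right computation.

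The necessity direction, however, has a genuine gap. You correctly obtain $\mu_{b,p}(\R)\leq 0$ by using $u$ itself as a test function, and you correctly identify that ruling out $\mu_{b,p}(\R)=0$ is the delicate point. But neither of your two proposed fixes works as stated. Having $Pu\leq -\delta$ only on $\R\times B$ and $Pu\leq 0$ elsewhere does \emph{not} produce, by any local modification of $u$ near $B$, a test function with $P\phi\leq\lambda\phi$ for some $\lambda<0$ on all of $\R\times\O$: the problem is near $\partial\O$, where $u$ vanishes and $Pu$ may be only $\leq 0$, so you cannot divide by $u$ to get a negative $\lambda$. Your alternative appeal to the Floquet characterization is also not a proof: $\mu_{b,p}(\R)=0$ does not, via Theorem~\ref{thm:caracmu}, preclude the existence of a bounded entire solution satisfying \eqref{infK}.

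The paper's device is to perturb $u$ additively by a boundary layer: set $\phi:=u+\e\vp^2$, where $\vp$ solves $-\Delta\vp=1$ in $\O$, $\vp=0$ on $\partial\O$ (this is where $\O\in\mathcal{C}^{2,\alpha}$ is used). Since $|\nabla\vp|\neq 0$ on $\partial\O$ and the operator is uniformly elliptic, one has $P(\vp^2)<-h$ in a neighborhood of $\partial\O$, hence $P\phi\leq Pu-\e h\leq -\e h$ there. On the remaining compact set, the strict KPP gap \eqref{hyp:fKPP} together with $\inf u>0$ gives $Pu\leq -\zeta$, which absorbs the bounded error $\e P(\vp^2)$ for $\e$ small. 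Thus $P\phi\leq\lambda\phi$ globally for some $\lambda<0$, and $\phi$ is still bounded with $\inf_{\R\times B}\phi>0$. This is the missing idea in your necessity argument.
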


The two results above can be deduced from the results of \cite{MSKol} 
under the additional hypothesis of random stationarity and ergodicity of the coefficients with respect to time. We will describe this hypothesis in Section \ref{sec:cases}. In the present article, we prove these results in full generality.

Let us point out that the strict inequality in \eqref{hyp:fKPP}, as well as
the $\mathcal{C}^{2,\alpha}$ regularity of $\O$, are only required in Propositions 
\ref{prop:initdatum}-\ref{pro:RD-existence}
to handle the case $\mu_{b,p}=0$.
While the former is really necessary, we believe that the latter could be relaxed.

Next, we show that if one assumes the following stronger version of the KPP hypothesis \eqref{hyp:fKPP}:
\Fi{fconcave}
\forall 0<s<s',\quad \inf_{\su{t\in I}{x\in\O}}\left(\frac{f(t,x,s)}{s}-\frac{f(t,x,s')}{s'}\right)>0,
\Ff
then the bounded solution to problem~\eqref{RD},~\eqref{infK} is unique.
This is true not only for entire solutions ($I=\R$), but also for {\em ancient} solutions ($I=\R^-$).

\begin{theorem}\label{thm:RD-uniqueness}
	Assume that $I=\R^-$, that the $a_{ij}$ are uniformly continuous on $\R^-\times\O$
 and that $f$ satisfies \eqref{f=0}, \eqref{f<0}, \eqref{fconcave}.
Then problem \eqref{RD}, \eqref{infK}
admits at most a unique bounded ancient solution.
\end{theorem}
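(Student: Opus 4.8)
The plan is to argue by a sliding-type comparison, exploiting the strengthened KPP hypothesis~\eqref{fconcave} exactly as one uses concavity to prove uniqueness of the positive stationary state in the time-independent case. Suppose $u$ and $v$ are two bounded ancient solutions of \eqref{RD} satisfying \eqref{infK}. Since $u,v$ are bounded above and bounded below away from $\partial\O$, for each fixed time the ratio is controlled; the key quantity to study is
\[
\sigma^*:=\sup\{\sigma>0\ :\ \sigma v\le u\text{ in }\R^-\times\O\}.
\]
First I would check that $\sigma^*$ is a well-defined positive real number: finiteness follows from $\inf_{\R^-\times K}u>0$ for balls $K$ together with $\sup v<+\infty$ and the Hopf-lemma behaviour near $\partial\O$ (both solutions vanish on the boundary with comparable normal derivatives by the parabolic boundary-point lemma applied on a fixed inner neighbourhood), while positivity follows symmetrically. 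By continuity, $\sigma^* v\le u$ everywhere. The goal is to show $\sigma^*\ge 1$; by symmetry this yields $u\equiv v$.

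Assume for contradiction $\sigma^*<1$. Set $w:=u-\sigma^* v\ge0$. Using \eqref{f=0} and writing the equation for $w$, one gets, for $t\in\R^-,\ x\in\O$,
\[
\partial_t w - a_{ij}\partial_{ij}w - b_i\partial_i w
= f(t,x,u)-\sigma^* f(t,x,v)
= \big(f(t,x,u)-f(t,x,\sigma^*v)\big)+\big(f(t,x,\sigma^*v)-\sigma^*f(t,x,v)\big).
\]
The first bracket is $c(t,x)\,w$ with $c\in L^\infty$ (by the $\mathcal C^1$ bound on $f'_s$), and the second bracket is, on the set where $v>0$, equal to $\sigma^*v\big(\tfrac{f(t,x,\sigma^*v)}{\sigma^*v}-\tfrac{f(t,x,v)}{v}\big)$, which by \eqref{fconcave} is bounded below by $\sigma^*v\cdot\delta(v)$ for some positive quantity; in particular it is $\ge0$, and is strictly positive, uniformly, on any region where $v$ is bounded away from $0$ and from a fixed level $M$. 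Hence $w$ is a nonnegative supersolution of a linear parabolic operator $\partial_t - a_{ij}\partial_{ij}-b_i\partial_i - c(t,x)$ with a strictly positive right-hand side on a fixed interior region.

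Now I would run a two-part alternative. Either $w$ vanishes somewhere in the interior of $\R^-\times\O$: then, since $w\ge0$ solves $\partial_t w-a_{ij}\partial_{ij}w-b_iw\partial_i w-c w\ge0$, the strong parabolic maximum principle (Nirenberg) forces $w\equiv0$ on the connected component of $\{t\le t_0\}$ containing that point, hence $w\equiv0$ on all of $\R^-\times\O$ by letting $t_0\to0$; but $w\equiv0$ means $u=\sigma^*v$, and plugging back into the equation the second bracket above must vanish a.e., contradicting \eqref{fconcave} on the set where $v$ is bounded away from $0$ (which is nonempty by \eqref{infK}). Or $w>0$ everywhere in $\R^-\times\O$: then I claim $\sigma^*$ was not maximal. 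Indeed, fix a ball $B\Subset\O$; on $\R^-\times \bar B$ we have $\inf w>0$ by \eqref{infK} applied to $u$ together with boundedness of $v$ — more precisely $w=u-\sigma^* v\ge \inf_{\R^-\times \bar B}u-\sigma^*\sup v$, which need not be positive, so instead I would localize: since the right-hand side of the $w$-equation is $\ge \eta>0$ on a fixed region $\R^-\times A$ with $A\Subset\O$ where $\sigma^*M/2\le \sigma^* v\le \sigma^* M$, bounded-below interior Harnack / the quantitative strong maximum principle gives $\inf_{\R^-\times B}w\ge \kappa>0$ for a slightly smaller ball; combined with the boundary Hopf estimate giving $w\ge c\,d(x,\partial\O)$ near $\partial\O$ and with $v\le C\,d(x,\partial\O)$ there, one concludes $w\ge \varepsilon v$ on all of $\R^-\times\O$ for some $\varepsilon>0$, i.e.\ $(\sigma^*+\varepsilon)v\le u$, contradicting the definition of $\sigma^*$.

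The main obstacle is the step just described: upgrading the pointwise inequality $w>0$ to the \emph{uniform} comparison $w\ge\varepsilon v$ over the whole \emph{unbounded} time strip $\R^-\times\O$, simultaneously near $\partial\O$ and in the interior. This requires uniform-in-$t$ parabolic estimates — a uniform interior Harnack inequality and a uniform boundary Hopf lemma — which is why the hypothesis that the $a_{ij}$ are \emph{uniformly} continuous on $\R^-\times\O$ (not merely continuous) is invoked: it guarantees a modulus of continuity independent of the time slab, hence time-independent constants in the Krylov–Safonov / Hopf estimates. Once those uniform bounds are in hand, the rest is the standard sliding argument. I would present the proof by first establishing the uniform Harnack and Hopf estimates as a preliminary lemma (or citing them), then carrying out the $\sigma^*$ argument, and finally noting that the symmetric conclusion $\sigma^{**}:=\sup\{\sigma:\sigma u\le v\}\ge1$ gives $u\equiv v$.
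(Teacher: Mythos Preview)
Your sliding framework via $\sigma^*$ is the same as the paper's, but two steps do not go through as written. First, to show $\sigma^*\in(0,\infty)$ you invoke a ``parabolic boundary-point lemma'' to compare $u$ and $v$ near $\partial\O$. The domain is only Lipschitz (hypothesis~\eqref{hyp:reg}), so the classical Hopf lemma is unavailable. The paper circumvents this entirely by using the Harnack inequality for \emph{quotients} of positive solutions (Theorem~\ref{thm:Harnack}, valid in Lipschitz domains): it linearizes around $v$, lets $\tilde u$ solve the resulting linear equation on $(T,0]$ with datum $u(T,\cdot)$, applies the quotient Harnack to $\tilde u/v$ at time $T+1$, and then compares $u$ with $\tilde u$ via a crude exponential factor. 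This yields $\sup_\O u(T+1,\cdot)/v(T+1,\cdot)\le C$ with $C$ independent of $T\le-1$, which is exactly what is needed.

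Second, and more seriously, your dichotomy ``$w$ vanishes at an interior point'' versus ``$w>0$ everywhere'' misses the real difficulty: in the second alternative one may still have $\inf_{\R^-\times K}w=0$ for some compact $K\subset\O$, the infimum being approached along $t_n\to-\infty$. Your proposed fix (``RHS $\ge\eta>0$ on a fixed region, so Harnack gives $\inf w\ge\kappa$'') fails because parabolic Harnack only propagates information \emph{forward} in time and cannot manufacture a bound uniform as $t\to-\infty$. The paper splits this case further. If $w$ does satisfy \eqref{infK}, it reruns the quotient-Harnack argument with $\tilde w$ (a linear solution dominated by $w$) in place of $v$ to get $u\le C''w$, hence $(\sigma^*+\varepsilon)v\le u$, a contradiction. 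If $w$ does \emph{not} satisfy \eqref{infK}, one takes $(t_n,x_n)$ with $w(t_n,x_n)\to0$, uses interior Harnack to get $w\to0$ on cylinders $[t_n-3,t_n-1]\times B$, and then applies interior $W^{1,2}_p$ estimates --- \emph{this} is where uniform continuity of the $a_{ij}$ is actually used, not in the Harnack constants as you suggest --- to pass to the limit in the integrated equation and force $\int \big(\bar k f(t,x,v)-f(t,x,\bar k v)\big)\to0$, contradicting \eqref{fconcave} together with $\inf_{\R^-\times B}v>0$.
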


Theorem \ref{thm:RD-uniqueness} 
extends a uniqueness result for ancient solutions obtained by 
Rodr\'iguez-Bernal and Vidal-L\'opez \cite{RB-VL} in the particular case where 
$a_{ij}(t,x)\equiv\delta_{ij}$ and $b_i(t,x)\equiv 0$, and assuming that the solutions are ordered.
Namely, \cite[Theorem 2]{RB-VL} asserts that 
if $u_{1}$ and $u_{2}$ are two bounded ancient solutions of \eqref{RD}, \eqref{infK}
such that $u_1\leq u_{2}$, then $u_{1}\equiv u_{2}$. 


\subsection{Aims of this study}

The above results highlight the relevance of the generalized principal eigenvalue $\mu_{b,p}$ for discussing the evolution Fisher-KPP type equation (\ref{RD}). 
The purpose of this paper is to introduce various possible notions of generalized principal eigenvalues and to investigate their properties. We have three main objectives. 

Our first task is to extend the notion of generalized principal eigenvalues from the elliptic case to the parabolic framework. Furthermore, following the systematic study carried out for elliptic operators, we would like to derive comparison or equivalence properties between the various notions of generalized principal eigenvalues in the parabolic setting. 

Secondly, we establish a connection between the generalized principal eigenvalues
and the Floquet theory. In doing so, we derive a new characterization of the exponential growth of the Floquet bundles when the coefficients of the parabolic operator have a general dependence on $t$. For some classes of operators (such as operators with uniquely ergodic or random stationary ergodic coefficients), we are then able to establish the equivalence between the various notions of generalized principal eigenvalues. 

Finally, we show some applications of the generalized principal
eigenvalues to the qualitative study of solutions
of linear and semilinear parabolic problems.

All these tasks will also give a unified point of view on
some results obtained in the literature using distinct tools and techniques.

The main technical tool employed in our proofs
is a new notion of average for possibly unbounded functions,
that we call {\em global growth-rate}, 
see Definition~\ref{def:lgr} below, which extends a previous notion introduced by the last two authors of the present work in~\cite{NR1}.

\medskip
We hope that this work will inspire future investigations aiming at extending the theory of parabolic generalized principal eigenvalues to unbounded domains. In such domains, the Floquet approach is not well-established. For want of such a theory, most of the results we discuss here are open in general in unbounded domains. Since generalized principal eigenvalues have proved to be successful in the discussion of problems in unbounded domains in the elliptic case, it is natural to expect that the same will be true in the parabolic setting as well.

\medskip

\textbf{Organization of the paper.}
In Section~\ref{sec:other} we introduce several definitions of generalized principal eigenvalue.
Next, we reclaim  the notion of principal Floquet bundle and we state the characterization of the generalized principal eigenvalues in terms of such notion in Section~\ref{sec:caracHuska}.
These results will allow us to derive some comparisons between the
different generalized principal eigenvalues, 
which are stated in Section~\ref{sec:comp}.
The  two following subsections
contain applications to the qualitative study of solutions
of linear problems and the maximum principle for ancient solutions.
In Sections~\ref{sec:perturbation},~\ref{sec:cases} and \ref{sec:*}
we summarize and re-frame some known results in the light
of our generalized principal eigenvalues:
perturbation results, examples and
computation in some particular cases and for limit operators, 
relation with the notion of exponential type.
Section~\ref{sec:tool}
contains some technical results: the H\"older regularity of the Floquet 
bundle, and the properties of our main tool, the global growth-rate,
which is of independent interest.
The last two sections contain the proofs of the connection 
with the Floquet bundle and of the remaining main results.

\medskip

\textbf{Acknowledgement.} This research has received funding from the French ANR Project ANR-23-CE40-0023-01 ReaCh.
LR was supported by the 
European Union -- Next Generation EU, on the PRIN project
 ``PDEs and optimal control methods in mean field games, population dynamics and multi-agent models'', and by INdAM Gnampa.


\section{Definitions and properties of the generalized principal eigenvalues
} 

\subsection{Further definitions of generalized principal eigenvalues}
\label{sec:other}

We now define further quantities that we also call {\em generalized principal eigenvalues}, by modifying the requirements
on the ``test-functions'' on which the operator $P$ acts. 
Such requirements are reflected by the symbol used to indicate the generalized principal eigenvalue,
according to the following notation table:


\begin{center}
\begin{tabular}{|c|l|}
        \hline
	$\lambda$ \  & \ supersolutions \\
        $\mu$     \ & \ subsolutions \\
	  ${(\;)}_b$ \ & \ bounded \\
        ${(\;)}_p$     \ & \ locally strictly positive \phantom{$A_{A_{A_{A_a}}}$}\\
        \hline
\end{tabular}
\end{center}

\noindent 
More precisely, we consider a fixed, open ball $B\Subset \O$, i.e.\ such that $\bar B\subset \O$,
and we define 
$$\lambda_{b,p}(I):=\sup\{\lambda\ :\ \ex\phi>0, \ \sup_{I\times\O}\phi<+\infty,\
\inf_{I\times B}\phi>0,\
P\phi\geq\lambda\phi\text{ in }I\times\O\}.$$

$$\mu_b(I):=\inf\{\lambda\ :\ \ex\phi>0 \text{ in }I\times\O,\  
	\sup_{I\times\O}\phi<+\infty,\ \phi=0\text{ on }I\times\partial\O,\
	P\phi\leq\lambda\phi\text{ in }I\times\O\}$$

$$\lambda_p(I):=\sup\{\lambda\ :\ \ex\phi>0 \text{ in }I\times\O,\ 
	\inf_{I\times B}\phi>0, \ P\phi\geq\lambda\phi\text{ in }I\times\O\}$$

%

$$
\mu_{p}(I)
:= \inf \{\lambda, \ \exists \phi>0, \
\inf_{I\times B}\phi > 0, \ \phi=0 \hbox{ on } I\times \partial \O,\ 
P\phi \leq \lambda \phi \hbox{ in } I\times\O \}$$

$$
\lambda_{b}(I)
:= \sup \{\lambda, \ \exists \phi >0, \ \sup_{I\times\O}\phi<+\infty,\ P\phi \geq \lambda \phi \hbox{ in }
I\times\O\}\,.$$

As in the definitions of $\mu_{b,p}$, 
the test-functions $\phi$ are assumed to belong to  
$W^{1,2}_{p,\, \mathrm{loc}}(I\times\ol\O)$ for some $p>N+1$.
The quantities $\lambda,\mu$ are defined as suprema and infima respectively, and doing it the other way around would yield $-\infty$ and $+\infty$ respectively.

We will show that the notions of generalized principal eigenvalue
introduced so far, including $\mu_{b,p}$,
coincide in some cases (see Propositions \ref{pro:elliptic}, \ref{pro:periodic},
\ref{pro:ap}, \ref{pro:ergo} below),
but in general they differ, see Remark \ref{rmk:differ}.
Some inequalities between them, together with some further properties, are 
provided by Proposition \ref{pro:basic} below.


\subsection{The relation between generalized principal eigenvalues and the Floquet bundle}\label{sec:caracHuska}


We consider here positive entire -- i.e., global-in-time -- solutions of the Dirichlet
problem in a bounded Lipschitz domain $\O$, that is,
\Fi{Pu=0}
	 \begin{cases}
	 Pu=0& \text{in }\R\times\O\\
	 u=0& \text{on }\R\times\partial\O.
	 \end{cases}
\Ff
The existence and uniqueness (up to normalization) of the positive entire solution 
has been derived in several frameworks \cite{CLM-Floquet,Huska, M, P, PT} and is a first step to extend
the Floquet theory to non-periodic settings. In the present paper, we will mostly rely on the work of H{\'u}ska, Pol{\'a}{\v{c}}ik and 
Safonov~\cite{Huska}.

\begin{theorem}[{\cite[Theorem 2.6]{Huska}}]\label{thm:uP}
There exists a unique time-global, nonnegative solution $u_{P}$
 to~\eqref{Pu=0} normalized by $\|u_{P}(0, \cdot)\|_{L^{\infty}(\O)}=1$. 
\end{theorem}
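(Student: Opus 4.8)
\medskip
\noindent\textbf{Proof idea.}
Both the existence and the uniqueness assertions rest on two ingredients: the parabolic maximum principle for $P$, and Harnack-type estimates up to the lateral boundary of the Lipschitz cylinder $\R\times\O$ — namely the comparability of positive solutions of the Dirichlet problem near $\partial\O$ (a \emph{boundary Harnack principle}), its backward-in-time counterpart, and the associated interior Krylov--Safonov estimates. Under hypothesis \eqref{hyp:reg} these hold with constants depending only on $N$, the ellipticity constant $\alpha$, the $L^\infty$-bounds on the coefficients, the modulus of continuity of the $a_{ij}$ and $\O$; this uniformity (in particular in time) is exactly what makes the construction of a genuinely \emph{entire} positive solution possible, and it is the technical heart of \cite{Huska}.

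For existence, the plan is to fix a nontrivial $\psi\in\mathcal C_c(\O)$ with $\psi\ge0$ and, for $n\in\N$, let $v_n$ solve $Pv_n=0$ in $(-n,+\infty)\times\O$, $v_n=0$ on the lateral boundary, $v_n(-n,\cdot)=\psi$; by the comparison principle $v_n>0$ for $t>-n$, so one may set $u_n:=v_n/\|v_n(0,\cdot)\|_{L^\infty(\O)}$, normalized by $\|u_n(0,\cdot)\|_{L^\infty(\O)}=1$. Comparison with the supersolution $(t,x)\mapsto e^{\|c\|_{L^\infty}t}$ bounds $\|u_n(t,\cdot)\|_{L^\infty(\O)}$ for $t\ge0$ uniformly in $n$, while the backward boundary Harnack inequality bounds it on each slab $[-R,0]$ uniformly in $n$ (for $n>R$). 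Interior and boundary parabolic estimates then give $\mathcal C_{\mathrm{loc}}(\R\times\ol\O)$ — indeed $W^{1,2}_{p,\,\mathrm{loc}}$ — compactness of $(u_n)$; along a subsequence $u_n\to u_P$, an entire nonnegative strong solution of \eqref{Pu=0}. Uniform convergence up to the boundary on $\{0\}\times\ol\O$ preserves the normalization, and the strong maximum principle together with the Hopf lemma yields $u_P>0$ throughout $\R\times\O$.

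For uniqueness, let $u$ and $v$ be two positive entire solutions of \eqref{Pu=0} with $\|u(0,\cdot)\|_{L^\infty(\O)}=\|v(0,\cdot)\|_{L^\infty(\O)}=1$. Applying the two-sided boundary Harnack principle to the pair $(u,v)$ on cylinders $(\tau-2,\tau)\times\O$ and letting $\tau$ range over $\R$ shows $u/v$ is bounded above and below by positive constants on all of $\R\times\O$. Set $\kappa:=\sup_{\R\times\O}u/v\in(0,+\infty)$, so $w:=\kappa v-u\ge0$ solves $Pw=0$ in $\R\times\O$ and vanishes on $\R\times\partial\O$. By the strong maximum principle, either $w\equiv0$ or $w>0$ in $\R\times\O$. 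In the latter case $(w,v)$ is again a pair of positive solutions of the Dirichlet problem, so the boundary Harnack principle gives $c_0>0$ with $w\ge c_0 v$, i.e.\ $u\le(\kappa-c_0)v$, contradicting the definition of $\kappa$. Hence $w\equiv0$, that is $u=\kappa v$, and the normalization forces $\kappa=1$, so $u\equiv v$.

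The step I expect to be the main obstacle is the boundary analysis underlying the first two paragraphs: establishing, for the non-divergence operator $P$ with merely continuous leading coefficients on a Lipschitz spatial domain, the comparability of positive solutions vanishing on the lateral boundary — in both time directions — with constants uniform in time. This is precisely the delicate part of the Fabes--Safonov--Yuan / Húska--Poláčik--Safonov theory; once it is granted, the compactness argument for existence and the maximum-principle argument for uniqueness are routine. In the present paper we simply invoke \cite[Theorem 2.6]{Huska}.
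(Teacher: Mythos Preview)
The paper does not supply its own proof of this theorem: it is stated as a direct citation of \cite[Theorem~2.6]{Huska}, with no argument given. Your proposal correctly identifies this at the end (``In the present paper we simply invoke \cite[Theorem 2.6]{Huska}''), and the sketch you provide --- existence by normalizing solutions on expanding time intervals $(-n,+\infty)$ and extracting a convergent subsequence via uniform-in-time Harnack and boundary estimates, uniqueness via the boundary Harnack principle applied to the quotient $u/v$ followed by a strong maximum principle argument on $\kappa v - u$ --- is indeed the strategy carried out in the cited reference, so there is nothing to correct.
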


The collection of the one-dimensional spaces $X_1(t):=\{k u_P (t,\cdot)\ :\ k\in\R\}$, $t \in \R$, is called the 
{\em principal Floquet bundle} of (\ref{Pu=0}).
%


If the operator $P$ is just defined on the domain $\R^+\times\O$ or on $\R^-\times\O$,
then we extend it by even reflection with respect to $t$, and we let $u_P$ denote the
corresponding function given by Theorem~\ref{thm:uP}.
As a matter of fact, despite of the fact that the particular choice of extension of the operator $P$ 
clearly affects the function 
$u_{P}$, our results will be independent of it. They hold true 
for any arbitrary extension, as long as hypotheses \eqref{hyp:reg} are fulfilled,
(see Remark \ref{rk:ball}$(\ref{rk:extension})$  below).

The function $u_{P}$ is continuous, in a certain sense, with respect to $L^{\infty}$ perturbations of the coefficients 
of $P$ and of the domain $\O$, see  \cite[Theorems 2.8 and 2.9]{Huska}. 

It is also proved in  \cite[Theorem 2.6]{Huska} (see also \cite{M, Map, P, PT}) that, for general solutions, exponential separation occurs, meaning that
the long time behavior is essentially determined by the projection on the space generated by $u_P$, up to a factor converging exponentially to $0$ in time, as stated in the next theorem.

\begin{theorem}[\cite{Huska}] \label{thm:Huska}
There exist $C,\gamma>0$ such that, 
for any continuous initial datum $u_0$ vanishing on $\partial\O$,
the solution to the problem $Pu=0$ on $(0,+\infty)\times\O$ with Dirichlet boundary condition satisfies
$$\forall t>0,\quad
\|u(t,\.)-q u_{P}(t,\.)\|_{L^{\infty}(\O)}\leq 
C\|u_{0}-q u_{P}(0,\.)\|_{L^{\infty}(\O)}
\|u_{P}(t,\cdot)\|_{L^{\infty}(\O)}\,e^{-\gamma t}\,,$$
for some $q\in \R$. Moreover, $u(t,\.)-q u_{P}(t,\.)$
has a zero in $\O$ for all $t \geq 0$.
\end{theorem}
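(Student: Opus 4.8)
Since this statement is quoted verbatim from \cite[Theorem 2.6]{Huska}, I only outline the strategy, as this exponential--separation mechanism underlies several of the arguments below. The plan is to realize the Dirichlet problem through the evolution family $\{U(t,s)\}_{t\ge s}$ acting on the Banach space $C_0(\ol\O)$ of continuous functions vanishing on $\partial\O$. Under \eqref{hyp:reg}, classical parabolic theory shows that each $U(t,s)$ with $t>s$ is compact and strongly positive (the latter via the parabolic Harnack inequality up to the boundary), and that $U(t,s)u_P(s,\cdot)=u_P(t,\cdot)$, so that $X_1(t)=\R\,u_P(t,\cdot)$ is an invariant line bundle.

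First I would construct a codimension-one invariant complement $X_2(t)$. Following H\'uska--Pol\'a\v{c}ik--Safonov, this comes from the formal adjoint problem: there is a positive entire solution $u_P^{*}$ of the adjoint Dirichlet problem, unique up to scaling, and one sets $X_2(t):=\{v\in C_0(\ol\O):\ \int_\O v\,u_P^{*}(t,\cdot)\,dx=0\}$. The duality identity $\int_\O (U(t,s)v)\,u_P^{*}(t,\cdot)\,dx=\int_\O v\,u_P^{*}(s,\cdot)\,dx$ yields the invariance of $X_2$, and every $u_0\in C_0(\ol\O)$ splits uniquely as $u_0=q\,u_P(0,\cdot)+w_0$ with $w_0\in X_2(0)$ and $q=\int_\O u_0\,u_P^{*}(0,\cdot)\,dx\,/\,\int_\O u_P(0,\cdot)\,u_P^{*}(0,\cdot)\,dx$; moreover $\|w_0\|_{L^\infty(\O)}$ is comparable to $\|u_0-q\,u_P(0,\cdot)\|_{L^\infty(\O)}$. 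Then $u(t,\cdot)-q\,u_P(t,\cdot)=U(t,0)w_0\in X_2(t)$.

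Second --- and this is the crux --- I would establish the uniform exponential contraction of $X_2$ relative to $X_1$: there exist $C,\gamma>0$, depending only on the structural data in \eqref{hyp:reg}, such that $\|U(t,s)w\|_{L^\infty(\O)}\le C\,e^{-\gamma(t-s)}\,\frac{\|u_P(t,\cdot)\|_{L^\infty(\O)}}{\|u_P(s,\cdot)\|_{L^\infty(\O)}}\,\|w\|_{L^\infty(\O)}$ for all $s\le t$ and all $w\in X_2(s)$. The mechanism is a Birkhoff--Hopf contraction in Hilbert's projective metric: applied to the pair $u_P(s,\cdot)\pm\e w$ (positive for small $\e$), the map $U(s+1,s)$ contracts the projective distance by a ratio $\le\kappa<1$, and the key point is that $\kappa$ can be taken \emph{independent of $s$}, thanks to uniform interior and boundary Harnack estimates (Krylov--Safonov together with boundary/Carleson estimates on the Lipschitz domain) whose constants depend only on $\alpha$, the ellipticity and the $L^\infty$ bounds of the coefficients. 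Iterating over unit time steps produces the exponential rate, and $X_2$ is exactly the direction annihilated by the contraction; taking $s=0$, $w=w_0$ gives the stated inequality.

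Finally, for the nodal claim: $v(t):=u(t,\cdot)-q\,u_P(t,\cdot)$ lies in $X_2(t)$, hence $\int_\O v(t)\,u_P^{*}(t,\cdot)\,dx=0$ with $u_P^{*}(t,\cdot)>0$ in $\O$, so the continuous function $v(t)$ cannot keep a constant sign and must vanish at some point of $\O$. The main obstacle in this program is the uniformity in $s$ of the Harnack and Carleson constants on the Lipschitz domain $\O$; once that is secured, the exponential separation --- and with it the whole statement --- follows by the iteration just described.
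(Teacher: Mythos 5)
Your outline is a faithful reconstruction of the machinery behind \cite[Theorem 2.6]{Huska}, whereas the paper does not reprove the result at all: it simply observes that statement (ii) of that theorem gives a unique $q$ such that $u_0-qu_P(0,\cdot)$ lies in the complementary Floquet bundle (whose defining property \emph{is} that $u(t,\cdot)-qu_P(t,\cdot)$ vanishes somewhere in $\O$ for all $t$), and that statement (iii) then gives the exponential estimate. So the two routes agree in substance, and your extra detail on the contraction mechanism --- uniform interior and boundary Harnack estimates feeding a projective-metric contraction over unit time steps, with constants depending only on the data in \eqref{hyp:reg} --- is exactly what makes the rate $\gamma$ and the constant $C$ uniform. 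One caveat on your construction of the complement $X_2(t)$ via the pairing with a positive entire solution $u_P^{*}$ of the adjoint problem: under hypothesis \eqref{hyp:reg} the operator is in non-divergence form with $a_{ij}$ merely continuous, so the formal adjoint is not a classical parabolic operator and $u_P^{*}$ is not directly available as a function; the paper itself points this out by giving the explicit formula $q=\int_\O u_0\,u_{P^\star}(0,\cdot)$ only in the divergence-form case. In the general setting one must either work with the dual evolution family on measures or, as in \cite{Huska}, take the sign-change property as the \emph{characterization} of the complementary bundle --- in which case the nodal claim is built into the decomposition rather than deduced from an orthogonality relation. With that adjustment your argument is sound.
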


This result is an immediate application of \cite[Theorem~2.6]{Huska}. Indeed, 
statement \cite[Theorem~2.6(ii)]{Huska} guarantees the existence of a unique $q\in\R$ such that $u_0-q u_P(0,\.)$ 
belongs to the {\em complementary Floquet bundle} associated to the problem,
meaning that $u(t,\.)-q u_P(t,\.)$ vanishes somewhere in $\O$, for any $t>0$.
Thus statement \cite[Theorem~2.6(iii)]{Huska} yields Theorem \ref{thm:Huska}.

When the operator is in divergence form, that is, 
$$P u:=\partial_t u-\partial_{i}\big(a_{ij}(t,x)\partial_{j}u\big)-\tilde{b}_i(t,x)\partial_i u-c(t,x)u,$$
the quantity $q$ is actually explicit. Namely, in this case we have
$q=\int_\O u_0(x)u_{P^\star}(0,x)dx$, where $P^\star$ is the adjoint operator:
$$P^\star u:=-\partial_t u-\partial_{i}\big(a_{ij}(t,x)\partial_{j}u\big)+\partial_i\big( \tilde{b}_i(t,x)u\big)-c(t,x)u,$$
and $u_{P^\star}$ is the unique entire solution of $P^\star u_{P^\star}=0$ on $\R\times \O$, $u_{P^\star}=0$ on $\R\times \partial\O$, normalized by $\int_\O u_P(0,x)u_{P^\star}(0,x)dx=1$. The function $u_{P^\star}$ is well-defined thanks to Theorem \ref{thm:Huska}, with the change of variables $t\mapsto -t$. 

\bigskip

We are now in position to state the results providing the link between Floquet bundles
and the generalized principal eigenvalues. 


\begin{theorem}\label{thm:caracmu}
For $I= \R,\;\R^+$ or $\R^-$, there holds
$$\mu_{b,p}(I)=-\lim_{t\to +\infty}
\left(\inf_{s,s+t\in I}\frac{\ln \|u_{P} (s+t,\cdot)\|_{L^{\infty}(\O)} - 
	\ln \| u_{P} (s,\cdot)\|_{L^\infty(\O)}}{t}\right),$$
$$\lambda_{b,p}(I)=-\lim_{t\to +\infty}
\left(\sup_{s,s+t\in I}\frac{\ln \|u_{P} (s+t,\cdot)\|_{L^{\infty}(\O)} - 
	\ln \| u_{P} (s,\cdot)\|_{L^\infty(\O)}}{t}\right).$$
\end{theorem}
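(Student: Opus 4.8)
The plan is to establish the two identities separately, but by completely parallel arguments; I describe the one for $\mu_{b,p}(I)$ in detail, the one for $\lambda_{b,p}(I)$ being obtained by replacing infima over $s$ by suprema and reversing the relevant inequalities. Write
$$
\gamma_*(I) := \lim_{t\to+\infty}\left(\inf_{s,\,s+t\in I}\frac{\ln\|u_P(s+t,\cdot)\|_{L^\infty(\O)}-\ln\|u_P(s,\cdot)\|_{L^\infty(\O)}}{t}\right),
$$
so that the claim is $\mu_{b,p}(I)=-\gamma_*(I)$. A first point to settle is that this limit exists: the quantity $\Phi(t):=\inf_{s,\,s+t\in I}\big(\ln\|u_P(s+t,\cdot)\|-\ln\|u_P(s,\cdot)\|\big)$ is superadditive in $t$ (using $\ln\|u_P(s+t_1+t_2,\cdot)\|-\ln\|u_P(s,\cdot)\|$ split at the intermediate time $s+t_1$), so by Fekete's lemma $\Phi(t)/t$ converges to $\sup_{t>0}\Phi(t)/t\in(-\infty,+\infty]$; boundedness above (hence finiteness of $\gamma_*$) follows from the uniform parabolic bounds on $u_P$ coming from hypotheses \eqref{hyp:reg} and the normalization in Theorem~\ref{thm:uP}, together with Harnack-type estimates that control $\|u_P(s+t,\cdot)\|/\|u_P(s,\cdot)\|$ on bounded time increments. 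This is where I expect the first bit of real work: getting \emph{uniform-in-$s$} two-sided bounds on the ratio $\|u_P(s+t,\cdot)\|/\|u_P(s,\cdot)\|$ for $t$ in a fixed bounded window, which is exactly the kind of estimate packaged in \cite{Huska} (and in the Hölder regularity of the Floquet bundle stated in Section~\ref{sec:tool}).

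For the inequality $\mu_{b,p}(I)\le -\gamma_*(I)$, I would build an admissible test function directly from $u_P$. Fix $\e>0$. By definition of $\gamma_*(I)$ there is $T>0$ with $\ln\|u_P(s+T,\cdot)\|-\ln\|u_P(s,\cdot)\|\ge (\gamma_*(I)-\e)T$ for all admissible $s$; this forces, for a suitable $\rho>\gamma_*(I)-\e$, a lower bound of the form $\|u_P(\sigma,\cdot)\|\ge c\,e^{-\rho|\sigma|}\|u_P(0,\cdot)\|$-type control along the bundle, i.e. the normalized bundle function
$$
\phi(t,x):=\frac{u_P(t,x)}{\|u_P(t,\cdot)\|_{L^\infty(\O)}}
$$
is bounded by $1$, vanishes on $\partial\O$, and satisfies $P\big(e^{-\lambda t}u_P\big) = -\lambda e^{-\lambda t} u_P$ trivially; the point is to renormalize the time-profile so that the resulting function has $\inf_{I\times B}\phi>0$ while remaining bounded. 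Concretely I would take $\phi(t,x)=\theta(t)\,u_P(t,x)$ with $\theta(t):=\big(\sup_{s\le t,\,s\in I}\|u_P(s,\cdot)\|\big)^{-1}$ or a smoothed variant, so that $\theta$ is monotone and $\theta(t)\|u_P(t,\cdot)\|$ is bounded above and below away from $0$; then $P\phi = \theta'(t)u_P/\theta(t)\cdot\phi \le \lambda\phi$ holds for $\lambda$ close to $-\gamma_*(I)$ because the logarithmic derivative of $\theta$ is controlled by $-\gamma_*(I)$ up to $\e$. One must check $\phi\in W^{1,2}_{p,\mathrm{loc}}$, which is why a smooth monotone majorant of $t\mapsto\|u_P(t,\cdot)\|$ is convenient. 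Letting $\e\to0$ gives $\mu_{b,p}(I)\le-\gamma_*(I)$.

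For the reverse inequality $\mu_{b,p}(I)\ge-\gamma_*(I)$, take any admissible test function $\phi$ for $\mu_{b,p}(I)$ with $P\phi\le\lambda\phi$, so $v:=e^{-\lambda t}\phi$ is a positive, bounded, $\partial\O$-vanishing supersolution of $P$. I would compare $v$ with the solution of $Pu=0$ issued at some time $s_0$ from data $v(s_0,\cdot)$: by the maximum principle $u(t,\cdot)\le v(t,\cdot)$ for $t\ge s_0$, while Theorem~\ref{thm:Huska} says $u(t,\cdot)$ is asymptotic to $q\,u_P(t,\cdot)$ with $q>0$ (positivity of $q$ follows since $v(s_0,\cdot)>0$ on $B$ and the exponential separation forbids $q\le 0$ for a positive datum staying positive). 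Hence $e^{-\lambda t}\phi(t,\cdot)=v(t,\cdot)\ge c\,u_P(t,\cdot)$ for large $t$, and feeding in $\inf_{I\times B}\phi>0$, $\sup\phi<\infty$ and the Harnack comparison of $\|u_P(t,\cdot)\|$ with $u_P(t,\cdot)|_B$ yields $e^{-\lambda t}\gtrsim \|u_P(t,\cdot)\|$ uniformly, i.e. $\ln\|u_P(s+t,\cdot)\|-\ln\|u_P(s,\cdot)\|\le -\lambda t + o(t)$ along the relevant sequences, whence $\gamma_*(I)\le-\lambda$ and so $\lambda\ge-\gamma_*(I)$; taking the infimum over $\lambda$ gives $\mu_{b,p}(I)\ge-\gamma_*(I)$. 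The main obstacle throughout is the uniformity in the base time $s$: both the construction of $\theta$ and the comparison argument must produce estimates with constants independent of $s$, which is precisely what the exponential-separation and bundle-continuity results of \cite{Huska}, together with the interior Harnack inequality, are there to supply; assembling them into clean $s$-uniform statements — especially when $I=\R^-$, where one pushes $s\to-\infty$ rather than $t\to+\infty$ — is the delicate part.
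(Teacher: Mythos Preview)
Your overall strategy matches the paper's: for $\mu_{b,p}(I)\le-\gamma_*(I)$ build a test function of the form $\theta(t)u_P(t,x)$, and for the reverse inequality start the Cauchy problem from $\phi(s,\cdot)$ at a generic time $s$ and compare with $u_P$. But the first half has a genuine gap. Your specific choice $\theta(t)=(\sup_{s\le t}\|u_P(s,\cdot)\|)^{-1}$ fails on both counts you need: the product $\theta(t)\|u_P(t,\cdot)\|=\|u_P(t,\cdot)\|/\sup_{s\le t}\|u_P(s,\cdot)\|$ is \emph{not} bounded below away from $0$ (whenever $\beta(t):=\ln\|u_P(t,\cdot)\|$ dips far below its running maximum, this ratio collapses), and $(\ln\theta)'$ equals $0$ on those same intervals, so $P\phi=(\theta'/\theta)\phi=0$ there, which is useless when $-\gamma_*(I)<0$. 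Saying ``a smoothed variant'' is precisely the missing content: you need a Lipschitz function $A$ with $A-\beta\in L^\infty(I)$ and $\essinf A'>\gamma_*(I)-\e$, and producing such an $A$ is nontrivial because only long-time increments of $\beta$ are controlled, not $\beta'$ pointwise. The paper isolates exactly this construction as Proposition~\ref{pro:gr-char}, proved by piecewise-linear interpolation of $\beta$ on a grid of step $T$ large enough that every increment $(\beta(s+T)-\beta(s))/T$ exceeds $\gamma_*(I)-\e$; then $\phi:=u_P\,e^{-A}$ satisfies $\|\phi(t,\cdot)\|=e^{\beta(t)-A(t)}\in L^\infty$ with positive infimum, and $P\phi=-A'\phi<(-\gamma_*(I)+\e)\phi$.

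In the second half there is a sign slip that propagates: $P(e^{-\lambda t}\phi)=e^{-\lambda t}(P\phi-\lambda\phi)\le 0$, so $v$ is a \emph{sub}solution and the solution $u$ with the same initial data satisfies $u\ge v$, not $u\le v$; once corrected, the argument produces a lower bound on $\|u_P\|$, which is what you actually need for $\gamma_*(I)\ge-\lambda$ (your line ``$\gamma_*(I)\le-\lambda$ and so $\lambda\ge-\gamma_*(I)$'' is also inverted). For the $s$-uniformity, the paper uses the ratio Harnack inequality (Theorem~\ref{thm:Harnack}) rather than exponential separation: it gives $m_s:=\sup_\O u(s{+}1,\cdot)/u_P(s{+}1,\cdot)\le C\,\|u(s{+}1,\cdot)\|_{L^\infty}/\|u_P(s{+}1,\cdot)\|_{L^\infty}$ with $C$ independent of $s$, after which a single comparison yields $\|u_P(s+t,\cdot)\|\ge m_s^{-1}\|u(s+t,\cdot)\|\ge c\,e^{-\lambda t}\|u_P(s+1,\cdot)\|$. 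Your route through Theorem~\ref{thm:Huska} would require controlling the projection coefficient $q=q_s$ uniformly in $s$, which in the end comes down to the same ratio Harnack estimate.
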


In the Floquet bundles theory, the closed interval $[-\mu_{b,p}(I), -\lambda_{b,p}(I)]$ is called the {\em principal spectrum} of the operator $P$ (see for example \cite{HST, Map, MS, MSbook}).

We remark that the $L^\infty$ norm in Theorem \ref{thm:caracmu}
can be replaced by any $L^p$ norm, $p\geq1$, owing to the Harnack inequality,
cf.\ \cite[Corollary 5.3 $(ii)$]{Huska}.
A first consequence is the following. 
\begin{corollary}\label{cor:caracmu}
There holds
$$\mu_{b,p}(\R) = \max \{\mu_{b,p}(\R^-), \mu_{b,p}(\R^+)\},$$
$$\lambda_{b,p}(\R) = \min \{\lambda_{b,p}(\R^-), \lambda_{b,p}(\R^+)\}.$$
\end{corollary}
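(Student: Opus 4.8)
The plan is to deduce Corollary~\ref{cor:caracmu} directly from the characterization of $\mu_{b,p}$ and $\lambda_{b,p}$ in terms of the growth rates of $\|u_P(\cdot,\cdot)\|_{L^\infty(\O)}$ given in Theorem~\ref{thm:caracmu}. Introduce the shorthand $g(s):=\ln\|u_P(s,\cdot)\|_{L^\infty(\O)}$, a real-valued function on $\R$ (recall that on $\R^\pm$ we use the even reflection of $P$, so $g$ is defined on all of $\R$ and by Remark~\ref{rk:ball} the value of $\mu_{b,p}(I)$ does not depend on the extension). By Theorem~\ref{thm:caracmu}, with $I=\R$, $\R^+$ or $\R^-$,
$$-\mu_{b,p}(I)=\lim_{t\to+\infty}\ \inf_{s,\,s+t\in I}\frac{g(s+t)-g(s)}{t},\qquad
-\lambda_{b,p}(I)=\lim_{t\to+\infty}\ \sup_{s,\,s+t\in I}\frac{g(s+t)-g(s)}{t}.$$
Thus it suffices to prove the purely real-variable identities
$$\lim_{t\to+\infty}\inf_{s,\,s+t\in\R}\tfrac{g(s+t)-g(s)}{t}
=\min\Big\{\lim_{t\to+\infty}\inf_{s\le 0}\tfrac{g(s+t)-g(s)}{t},\ \lim_{t\to+\infty}\inf_{0\le s}\tfrac{g(s+t)-g(s)}{t}\Big\}$$
(here ``$s,s+t\in\R^-$'' forces $s+t\le 0$, hence $s\le -t<0$; ``$s,s+t\in\R^+$'' forces $s\ge 0$) and the analogous one with $\inf\rightsquigarrow\sup$ and $\min\rightsquigarrow\max$.

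For the first identity I would argue as follows. The inclusion ``$\le$'' is immediate in the sense that the infimum over $s,s+t\in\R$ is taken over a larger set than each of the two restricted infima, so for every fixed $t$,
$$\inf_{s,\,s+t\in\R}\tfrac{g(s+t)-g(s)}{t}\ \le\ \min\Big\{\inf_{s\le -t}\tfrac{g(s+t)-g(s)}{t},\ \inf_{s\ge 0}\tfrac{g(s+t)-g(s)}{t}\Big\};$$
passing to the limit $t\to+\infty$ yields ``$\le$''. For the reverse inequality ``$\ge$'', the key observation is that an arbitrary pair $(s,s+t)$ with both endpoints in $\R$ falls into one of two cases: either the whole interval $[s,s+t]$ lies in $\R^-$ (i.e.\ $s+t\le 0$), or the whole interval lies in $\R^+$ (i.e.\ $s\ge 0$), or it straddles $0$, i.e.\ $s<0<s+t$. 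In the straddling case I would split the increment at $0$:
$$g(s+t)-g(s)=\big(g(s+t)-g(0)\big)+\big(g(0)-g(s)\big),$$
with $s+t\in\R^+$ over a length $t_1:=s+t>0$ and $-s\in\R^+$ (after reflection, or directly $g(0)-g(s)$ with $s<0$, which is a $\R^-$-increment of length $t_2:=-s>0$), and $t_1+t_2=t$. Writing $\tfrac{g(s+t)-g(s)}{t}=\tfrac{t_1}{t}\cdot\tfrac{g(s+t)-g(0)}{t_1}+\tfrac{t_2}{t}\cdot\tfrac{g(0)-g(s)}{t_2}$, this is a convex combination of a $\R^+$-type and a $\R^-$-type difference quotient, hence bounded below by $\min$ of the two, provided we control the case when $t_1$ or $t_2$ is small. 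To handle small $t_1,t_2$ one uses the crucial fact, available from the Floquet theory (and already implicit in Theorem~\ref{thm:caracmu}), that $g$ is Lipschitz: indeed $\|u_P(s+t,\cdot)\|_{L^\infty}/\|u_P(s,\cdot)\|_{L^\infty}$ is bounded above and below by $e^{\pm L|t|}$ for bounded $|t|$ thanks to the uniform bounds on the coefficients and parabolic/Harnack estimates, so $|g(s+t)-g(s)|\le L|t|$ for $|t|\le 1$, say. Thus when $t_1\le 1$ (resp.\ $t_2\le 1$) the corresponding term contributes at most $O(1/t)\to 0$, and only the other, long piece matters; a clean way to organize this is to note that for the limit it suffices to take $t$ along a sequence and, for each $t$, either $t_1\ge t/2$ or $t_2\ge t/2$, in which case the short piece is $O(1)$ and the long difference quotient over length $\ge t/2$ converges to the relevant one-sided limit. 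Making these estimates uniform in $s$ and passing to the limit gives ``$\ge$''.

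The second identity, for $\lambda_{b,p}$ with $\sup$ and $\max$, is proved by the same argument with all inequalities reversed (a convex combination is bounded above by the max of its constituents). The main obstacle — really the only nontrivial point — is the reverse inequality in the first identity, specifically the bookkeeping for straddling intervals and the verification that the limits defining $-\mu_{b,p}(\R^\pm)$ exist and that one may restrict attention to the ``long'' half of a straddling interval; the Lipschitz bound on $g$ (equivalently, the uniform two-sided exponential control of $\|u_P(s+t,\cdot)\|_{L^\infty}/\|u_P(s,\cdot)\|_{L^\infty}$ for bounded time increments) is what makes this work, and it should be cited from \cite{Huska} or proved in a line from the boundedness of the coefficients. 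One should also remark that the existence of all four one-sided limits is guaranteed by Theorem~\ref{thm:caracmu} itself (those limits \emph{are} $-\mu_{b,p}(I)$ and $-\lambda_{b,p}(I)$), so no separate subadditivity/Fekete argument is needed here — although it is worth noting in passing that $t\mapsto\inf_{s,s+t\in\R}(g(s+t)-g(s))$ is superadditive, which is the underlying reason the limit exists and equals the supremum over $t$ of $\frac1t$ times this quantity.
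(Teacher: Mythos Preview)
Your approach is correct and reaches the same conclusion as the paper, but by a different route. The paper's proof is a one-liner: it invokes Theorem~\ref{thm:caracmu} to rewrite $\mu_{b,p}(I)=-\lgr{\beta}_I$ and $\lambda_{b,p}(I)=-\ggr{\beta}_I$, and then appeals to Proposition~\ref{pro:gr-char}, where the identity $\lgr{\beta}_\R=\min\{\lgr{\beta}_{\R^-},\lgr{\beta}_{\R^+}\}$ (and its companion for $\ggr{\cdot}$) has already been proved. There, the reverse inequality is obtained not by splitting intervals but through the characterization $\lgr{G}_I=\sup\{\essinf_I A':A-G\in L^\infty(I),\ A'\in L^\infty(I)\}$: one takes near-optimal Lipschitz approximants $A_\pm$ on $\R^\pm$, glues them continuously at $0$, and reads off the bound. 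Your argument bypasses this characterization entirely and works directly on the difference quotients, splitting a straddling interval $[s,s+t]$ at $0$ into a convex combination of a $\R^+$-quotient and a $\R^-$-quotient, and controlling the short piece via the uniform bound on $g$ over bounded time increments. This is more elementary once Theorem~\ref{thm:caracmu} is in hand; the paper's route, in exchange, establishes the approximant characterization of Proposition~\ref{pro:gr-char}, which is a tool used elsewhere (notably inside the proof of Theorem~\ref{thm:caracmu} itself).

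One small correction: you assert that $g$ is Lipschitz, i.e.\ $|g(s+t)-g(s)|\le L|t|$ for $|t|\le 1$. The paper only proves H\"older continuity of $\beta=g$ (Lemma~\ref{lem:beta-Holder}); the two-sided bound \eqref{C'<C'} from \cite{Huska} gives $|g(s+t)-g(s)|\le \ln C'$ for $t\in[0,1]$, which is the sublinear growth condition \eqref{hyp:sublinear} rather than a Lipschitz bound. Fortunately your argument only needs that the ``short'' piece has bounded absolute increment (so that, divided by $t$, it is $O(1/t)$), and this is exactly what \eqref{hyp:sublinear} provides. So replace ``Lipschitz'' by a reference to \eqref{hyp:sublinear} (or to \eqref{C'<C'}) and the argument goes through unchanged. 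The endpoint issue that $0\notin\R^\pm$ is harmless by the continuity of $g$.
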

We now state further characterizations and comparison results.
\begin{theorem}\label{thm:caraceta}
There holds
$$\mu_p(\R)=\mu_p(\R^+)= - \liminf_{t\to +\infty}\frac{\ln \|u_{P} (t,\cdot)\|_{L^{\infty}(\O)}}{t},$$
$$\lambda_b(\R)=\lambda_b(\R^+)= -\limsup_{t\to +\infty}\frac{\ln \|u_{P} (t,\cdot)\|_{L^{\infty}(\O)}}{t}.$$
In particular, one has $\lambda_b(\R)\leq\mu_p(\R)$.

\end{theorem}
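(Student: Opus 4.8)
The plan is to establish each of the two displayed identities by a double inequality, exploiting the characterization of the principal Floquet bundle and the exponential-separation property from Theorems~\ref{thm:uP} and~\ref{thm:Huska}. Let me write $m(t):=\ln\|u_P(t,\cdot)\|_{L^\infty(\O)}$ and first treat $\mu_p$. For the inequality $\mu_p(\R^+)\le -\liminf_{t\to+\infty} m(t)/t$, I would use $u_P$ itself (restricted to $\R^+\times\O$, after the even reflection) as an admissible test function: it is positive, vanishes on the boundary, satisfies $Pu_P=0$, and the Harnack inequality of H\'uska \cite[Corollary 5.3]{Huska} gives $\inf_{\R^+\times B}u_P>0$ once we renormalize by dividing by $\|u_P(t,\cdot)\|_{L^\infty}$; more precisely, for any $\e>0$ the function $\phi(t,x):=e^{(\liminf m(t)/t - \e)t}\,e^{-m(t)}u_P(t,x)$ — or a suitable variant chosen so that $P\phi\le\lambda\phi$ with $\lambda=-\liminf m(t)/t+\e$ — is admissible for the infimum defining $\mu_p$. (Here $Pu_P=0$ forces $P\phi = (\dot\psi/\psi)\phi$ where $\psi$ is the scalar time-factor, so the inequality is just a statement about the derivative of $\ln\psi$; some care with a.e.\ differentiability is needed, but monotone comparison and the fact that only the asymptotic slope matters should handle it.) For the reverse inequality, given any admissible $\phi$ with $P\phi\le\lambda\phi$, I would compare $\phi$ with the solution of $Pu=0$ issued from $\phi(0,\cdot)$ via the maximum principle, use Theorem~\ref{thm:Huska} to say that this solution is asymptotically $q\,u_P(t,\cdot)$ with $q>0$ (positivity of $q$ following from $\inf_{I\times B}\phi>0$ and the Harnack inequality, which rules out the component in the complementary bundle dominating), and conclude that $\|u_P(t,\cdot)\|_{L^\infty}\lesssim e^{-\lambda t}\sup_\O\phi$ is false unless... — actually the comparison gives $e^{-\lambda t}\phi$ is a supersolution, hence dominates $u(t,\cdot)\sim q u_P(t,\cdot)$, forcing $m(t)\ge -\lambda t + O(1)$ along a subsequence, i.e.\ $\liminf m(t)/t\ge-\lambda$, whence $-\liminf m(t)/t\le\lambda$; taking the infimum over $\lambda$ yields the bound. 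The argument for $\lambda_b$ is entirely parallel with inf/sup, sub/supersolutions, and $\liminf$/$\limsup$ interchanged, using supersolutions $\phi$ bounded above (no positivity-away-from-boundary needed), so that the comparison goes the other way.

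The equality $\mu_p(\R)=\mu_p(\R^+)$ (and likewise for $\lambda_b$) I would get from the already-available inequality $\mu_p(\R)\ge$ or $\le \mu_p(\R^+)$ coming from restriction of test functions — a test function on $\R\times\O$ restricts to one on $\R^+\times\O$, giving $\mu_p(\R^+)\le\mu_p(\R)$ — together with the fact that the right-hand side $-\liminf_{t\to+\infty}m(t)/t$ only sees the behavior of $u_P$ as $t\to+\infty$ and is insensitive to the extension of $P$ to negative times (Remark~\ref{rk:ball}$(\ref{rk:extension})$); so both $\mu_p(\R)$ and $\mu_p(\R^+)$ equal the same limit quantity. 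Concretely, once the two-sided bound above is proved for $I=\R^+$, I would prove the matching bound for $I=\R$ by the same comparison argument (now on all of $\R$, which is actually easier on the ``admissible $\phi\Rightarrow$ bound'' side and requires the $\R^+$-construction, read off on $\R$, for the other side), and the two agree.

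Finally, $\lambda_b(\R)\le\mu_p(\R)$ is immediate once both identities are in hand: $\lambda_b(\R)=-\limsup m(t)/t\le-\liminf m(t)/t=\mu_p(\R)$, since $\limsup\ge\liminf$ always. (One could alternatively observe it directly: any $\phi$ admissible for $\lambda_b$ and any $\psi$ admissible for $\mu_p$ can be compared via the maximum principle on a half-line to force $\lambda\le\mu$, but going through the limit characterization is cleaner.)

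The main obstacle I anticipate is the first inequality for $\mu_p$, namely producing a genuinely admissible test function from $u_P$: one must verify that the time-renormalized function $e^{-m(t)}u_P(t,x)$ — or rather a corrected version with the right exponential prefactor — lies in $W^{1,2}_{p,\mathrm{loc}}$, is bounded (it is, by normalization and Harnack) and, crucially, that the scalar factor $\psi(t)=e^{m(t)}e^{(-\liminf m/t+\e)t}$ can be arranged to be nondecreasing (or to have nonnegative a.e.\ derivative in the relevant averaged sense) so that $P\phi=-(\psi'/\psi)\phi+\lambda_{\rm eff}\phi\le\lambda\phi$ holds almost everywhere; this is precisely where the paper's ``global growth-rate'' machinery (Definition~\ref{def:lgr}) is presumably designed to be used, replacing the naive pointwise slope by a well-behaved average that is bounded below by $\liminf m(t)/t$ and yields an honest differential inequality. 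Getting the positivity of $q$ in the reverse direction — i.e.\ that an admissible $\phi$ with $\inf_{I\times B}\phi>0$ cannot have vanishing projection onto the principal bundle — is a second, milder point, handled by Harnack together with the zero-number/exponential-separation statement in Theorem~\ref{thm:Huska}.
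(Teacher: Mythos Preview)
Your overall two-sided-bound strategy matches the paper's, but you overcomplicate the test-function direction and misidentify where the technical difficulty lies. The global growth-rate machinery (Definition~\ref{def:lgr}, Proposition~\ref{pro:gr-char}) is \emph{not} used for this theorem; it is reserved for Theorem~\ref{thm:caracmu} on $\mu_{b,p},\lambda_{b,p}$, where one needs control uniform over all time translations. Here the construction is elementary: given $\lambda>-\liminf_{t\to+\infty}\beta(t)/t$ and $\lambda'<-\limsup_{t\to-\infty}\beta(t)/t$, the paper sets $\phi_p(t,x):=u_P(t,x)\,e^{\int_0^t\gamma(\tau)\,d\tau}$ with $\gamma$ \emph{smooth}, nondecreasing, equal to $\lambda'$ on $(-\infty,-1]$ and to $\lambda$ on $[1,+\infty)$. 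Then $P\phi_p=\gamma(t)\phi_p\le\lambda\phi_p$ holds exactly, with no differentiability of $\beta$ required, and a direct check gives $\|\phi_p(t,\cdot)\|_{L^\infty}\to+\infty$ as $t\to\pm\infty$, whence $\inf_{\R\times B}\phi_p>0$ via the boundary Harnack inequality. Your candidate $\phi=e^{(L-\e)t-m(t)}u_P$ fails both because $m$ is only H\"older-continuous (Lemma~\ref{lem:beta-Holder}) and because the resulting differential inequality would impose a pointwise lower bound on $m'$ that need not hold; invoking the growth-rate characterization to repair this is unnecessary. Two further slips: when $P\phi\le\lambda\phi$, the function $e^{-\lambda t}\phi$ is a \emph{sub}solution of $Pu=0$, not a supersolution (your conclusion $m(t)\ge-\lambda t+O(1)$ is correct once the direction is fixed); and this bound must hold for \emph{all} large $t$, not merely along a subsequence, since it is the $\liminf$ that is being bounded from below.

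For the comparison direction the paper bypasses the full exponential separation of Theorem~\ref{thm:Huska} in favor of the quotient Harnack inequality (Theorem~\ref{thm:Harnack}): with $u$ the solution of $Pu=0$ issued from $\phi(0,\cdot)$, one has $u\ge e^{-\lambda t}\phi$ by comparison, then Theorem~\ref{thm:Harnack} at time $1$ gives $\sup_\O u(1,\cdot)/u_P(1,\cdot)=:m<\infty$, hence $u_P\ge m^{-1}u\ge m^{-1}Ke^{-\lambda t}$ for all $t\ge1$. This sidesteps the question of whether the projection coefficient $q$ is positive. Your route via Theorem~\ref{thm:Huska} would also succeed but is less direct. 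The equalities $\mu_p(\R)=\mu_p(\R^+)$ and $\lambda_b(\R)=\lambda_b(\R^+)$ are obtained as you describe: the test function $\phi_p$ (resp.\ $\phi_b$) is built on all of $\R$ using the two-slope $\gamma$, giving the bound for $I=\R$, while the comparison argument on $\R^+$ together with the trivial restriction inequality closes the chain.
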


\begin{theorem}\label{thm:caraclambda}
There holds
$$\mu_b(\R)=\mu_b(\R^-) =- \liminf_{t\to  -\infty}\frac{\ln \|u_{P} (t,\cdot)\|_{L^{\infty}(\O)}}{t},$$
$$\lambda_p(\R)=\lambda_p(\R^-)= -\limsup_{t\to -\infty}\frac{\ln \|u_{P} (t,\cdot)\|_{L^{\infty}(\O)}}{t}.$$
In particular,  one has     $\lambda_p(\R)\leq\mu_b(\R)$.

\end{theorem}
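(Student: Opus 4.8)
The plan is to prove the four identities ``from the past'', by an argument parallel to (and in a suitable sense dual to) that of Theorem~\ref{thm:caraceta}. The heuristic for the exchange of roles---$\mu_p,\lambda_b$ in Theorem~\ref{thm:caraceta} becoming $\mu_b,\lambda_p$ here---is that reversing time interchanges ``bounded'' with ``bounded below on $B$'', i.e.\ the indices $b$ and $p$; this is the parabolic shadow of $L^1$--$L^\infty$ duality, made precise through the adjoint operator $P^\star$. I would record this observation but carry out all estimates directly on $P$, since $\O$ is only Lipschitz and passing to $P^\star$ would cost a divergence-form hypothesis.

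\emph{Easy inequalities, and the reduction from $\R$ to $\R^-$.} Let $u_P$ be the entire solution from Theorem~\ref{thm:uP}; since $Pu_P=0$, the function $\phi_\lambda:=e^{\lambda t}u_P$ satisfies $P\phi_\lambda=\lambda\phi_\lambda$, so it is at once an admissible sub- and super-solution. Using the interior Harnack inequality on $B$ one checks that $\phi_\lambda$ is bounded on $\R^-\times\O$ whenever $\lambda>-\liminf_{t\to-\infty}t^{-1}\ln\|u_P(t,\cdot)\|_\infty$, and that $\inf_{\R^-\times B}\phi_\lambda>0$ whenever $\lambda<-\limsup_{t\to-\infty}t^{-1}\ln\|u_P(t,\cdot)\|_\infty$; this gives $\mu_b(\R^-)\le-\liminf$ and $\lambda_p(\R^-)\ge-\limsup$. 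For the two inequalities between the $\R$- and $\R^-$-eigenvalues that are not automatic by restriction, I would replace $\phi_\lambda$ by $\beta(t)\phi_\lambda$ with $\beta\equiv1$ on $\R^-$ and $\beta$ monotone on $\R^+$---decreasing and bounded for the $\mu_b$ statement, increasing (and possibly unbounded, which $\lambda_p$ allows) for the $\lambda_p$ statement---so that $P(\beta\phi_\lambda)-\lambda\beta\phi_\lambda=\beta'\phi_\lambda$ has the right sign; choosing $\beta$ to beat the at-most-exponential behavior of $\|u_P(t,\cdot)\|_\infty$ on $\R^+$ keeps the test function admissible there. Combined with the trivial restriction inequalities, this yields $\mu_b(\R)=\mu_b(\R^-)$ and $\lambda_p(\R)=\lambda_p(\R^-)$ once the $\R^-$ identities are known; and $\lambda_p(\R)\le\mu_b(\R)$ then follows at once from $\limsup\ge\liminf$.

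\emph{The reverse inequalities $\mu_b(\R^-)\ge-\liminf$ and $\lambda_p(\R^-)\le-\limsup$.} Let $\phi$ be admissible for some parameter $\lambda$ and put $\psi:=e^{-\lambda t}\phi$, a genuine sub- (resp.\ super-) solution. For $\tau>0$, I would compare $\psi$ on the strip $(-\tau,0)\times\O$ with the solution $w_\tau$ of $Pw_\tau=0$ whose datum at $t=-\tau$ equals $\psi(-\tau,\cdot)$ in the $\mu_b$ case, resp.\ a fixed positive bump of height $\asymp\psi(-\tau,\cdot)$ supported in $B$ in the $\lambda_p$ case; the parabolic maximum principle then yields a one-sided comparison between $\psi$ and $w_\tau$ on the strip, hence between $\phi(0,\cdot)$ and $w_\tau(0,\cdot)$. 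Feeding $w_\tau$ into Theorem~\ref{thm:Huska} (applied on $(-\tau,+\infty)$) gives $w_\tau(0,\cdot)=q_\tau u_P(0,\cdot)+O\big(\|w_\tau(-\tau,\cdot)-q_\tau u_P(-\tau,\cdot)\|_\infty e^{-\gamma\tau}\big)$ with $q_\tau>0$ (the datum being nonnegative and nontrivial). The comparison at $t=0$ shows that $q_\tau$ is of order $1$; the size of the datum at $t=-\tau$ shows that $q_\tau\|u_P(-\tau,\cdot)\|_\infty$ is of order $e^{\lambda\tau}$; combining the two controls $\|u_P(-\tau,\cdot)\|_\infty$ and produces exactly $\limsup_{\tau\to+\infty}\tau^{-1}\ln\|u_P(-\tau,\cdot)\|_\infty\le\lambda$ (resp.\ $\liminf_{\tau\to+\infty}\ge\lambda$), which is the desired inequality.

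The main obstacles are the following. (i) The control $q_\tau\asymp e^{\lambda\tau}\|u_P(-\tau,\cdot)\|_\infty^{-1}$ rests on the representation $q=\int_\O u_0\,u_{P^\star}(-\tau,\cdot)$ together with $\|u_P(t,\cdot)\|_\infty\|u_{P^\star}(t,\cdot)\|_\infty\asymp1$ (from $\int_\O u_P u_{P^\star}\equiv1$ and Harnack), both valid only in divergence form; removing this hypothesis calls for an approximation smoothing the $a_{ij}$ together with the continuous dependence of $u_P$ on the coefficients. (ii) Comparing $\phi$, which is merely a sub/supersolution, with genuine solutions near $\partial\O$ must go through the boundary Harnack principle, Hopf's lemma being unavailable in a general Lipschitz domain. (iii) The argument as described delivers the conclusion only for $\lambda<\gamma$, where $\gamma$ is the exponential-separation rate of Theorem~\ref{thm:Huska}; the remaining range is reached after the shift $c\mapsto c+\sigma$, which translates $\mu_b$, $\lambda_p$ and both semilimits by $-\sigma$ while leaving $\gamma$ unchanged.
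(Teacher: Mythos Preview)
Your overall architecture---easy direction via $e^{\lambda t}u_P$, reduction from $\R$ to $\R^-$ by multiplying by a monotone factor on $\R^+$, hard direction via solutions launched at time $-\tau$ and compared at time $0$---matches the paper's. The easy half and the reduction are essentially identical to what the paper does (its functions $\phi_b,\phi_p$ from the proof of Theorem~\ref{thm:caraceta}, reused here, are precisely your $\beta(t)\phi_\lambda$).

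The substantive difference is the tool you pick for the hard direction. You invoke Theorem~\ref{thm:Huska} (exponential separation) and then spend your three obstacles paying for it: controlling $q_\tau$ without divergence form, boundary behavior, and the artificial restriction $\lambda<\gamma$. The paper instead uses only Theorem~\ref{thm:Harnack}, the Harnack inequality for \emph{quotients} of two positive solutions, and this single change makes all three obstacles disappear. Concretely: with $u^s$ the solution of $Pu=0$ on $(s,0]$ launched from $\phi(s,\cdot)$, comparison gives $u^s\le\phi\,e^{-\lambda(t-s)}$ (resp.\ $\ge$) on the strip; then Theorem~\ref{thm:Harnack} at time $s+1$ yields
\[
\sup_\O\frac{u_P(s+1,\cdot)}{u^s(s+1,\cdot)}\ \le\ C\,\inf_\O\frac{u_P(s+1,\cdot)}{u^s(s+1,\cdot)}\ \le\ C\,\frac{\|u_P(s+1,\cdot)\|_\infty}{\inf_B u^s(s+1,\cdot)},
\]
with $C$ independent of $s$; one propagates this by the ordinary comparison principle to $t=0$ and reads off the desired bound on $\|u_P(s+1,\cdot)\|_\infty$ directly. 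No projection coefficient $q_\tau$, no adjoint, no spectral gap $\gamma$, no shift $c\mapsto c+\sigma$. Your obstacle (ii) remains, but it is absorbed wholesale into Theorem~\ref{thm:Harnack}, which is stated up to $\partial\O$.

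In short: your route can be made to work---your fixes for (i) and (iii) are valid, and indeed $q_\tau$ can be pinned down without divergence form using the sign-change clause in Theorem~\ref{thm:Huska} plus Harnack, as done in the paper's discussion of the exponential type in Section~\ref{sec:*}---but it is a detour. The cleaner observation is that full exponential separation is more than the proof needs; the quotient Harnack inequality already compares an arbitrary positive solution to $u_P$ up to a multiplicative constant uniform in time, and that is exactly the estimate the argument requires.
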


\begin{remark}\label{rk:ball} 
\begin{enumerate}[$(i)$]

\item\label{rk:extension} 
Since the \gpe s associated with the intervals 
$\R^-$ or $\R^+$ only depend on the operator $P$ restricted to such intervals, 
one deduces that the quantities at the the right-hand sides of the formulas 
in Theorems \ref{thm:caracmu},
\ref{thm:caraceta}, \ref{thm:caraclambda} do not 
depend on the values of the coefficients of $P$ outside the interval $I$, or outside 
$\R^+$ or $\R^-$ respectively.
In particular, those theorems hold true independently of 
the extension of $P$ used in order to 
define the function $u_P$ when $I\neq\R$, 
provided it preserves the hypotheses \eqref{hyp:reg}.

\item\label{rk:IT} As a byproduct of Theorems \ref{thm:caracmu},
\ref{thm:caraceta}, \ref{thm:caraclambda} one infers that the quantities
$\mu_{b,p}(I)$,  $\lambda_{b,p}(I)$, $\mu_p(I)$, $\lambda_b(I)$,
$\mu_b(I)$, $\lambda_p(I)$ do not change if one 
replaces the interval $I=\R^-$ or 
$I=\R^+$ with any half-line $(-\infty,T)$ or $(T,+\infty)$ respectively.

\item\label{rk:ballB} As a consequence of the above results,
it turns out that the definition of the eigenvalues
$\mu_{b,p}$, $\lambda_{b,p}$, $\mu_{p}$, $\lambda_{p}$ do not depend on the 
choice of the ball $B\Subset \O$ in the condition $\inf_{I\times B}\phi>0$
required on the test functions.
We will actually show that one could equivalently require 
the stronger condition
$\inf_{I\times K}\phi>0$ for every compact set $K\subset\O$,
or the weaker one
$$\inf_{t\in I}\|\phi(t,\.)\|_{L^\infty(\O)}>0,$$
(and even a weaker condition) without changing the definitions, 
see Remarks \ref{rk:relax_bp}, \ref{rk:relax} below.
\end{enumerate}
\end{remark}


\subsection{Comparison between the different generalized principal eigenvalues}
\label{sec:comp}

The characterization of the generalized 
principal eigenvalues via the Floquet bundle
yield some relations between the different 
generalized principal eigenvalues.
We summarize them in the following statement. 
It also includes a monotonicity property with respect to the inclusions  of the domains.
\begin{proposition} \label{pro:basic} 
For $I= \R,\;\R^+$ or $\R^-$, 
the quantities $\lambda_{b,p}(I)$, $\mu_{b,p}(I)$, $\mu_p(I)$, $\lambda_b(I)$,
$\mu_b(I)$, $\lambda_p(I)$
are well defined and finite, except for $\lambda_b(\R^-)=\lambda_p(\R^+)=+\infty$,
$\mu_p(\R^-)=\mu_b(\R^+)=-\infty$, 
and satisfy
\[
\begin{array}{c}
\displaystyle
-\sup_{I\times\O}c\;\leq\;\lambda_{b,p}(I)\;\leq\; \min\{\lambda_b(I),\lambda_p(I)\}
\;\leq\;
\max\{\mu_b(I),\mu_p(I)\}\;\leq\; \mu_{b,p}(I).
\end{array}
\]
Moreover, these eigenvalues are nonincreasing with respect to the inclusion of the domain $\O$,
in the sense that the ones associated with the domain $\O$ are
smaller than or equal to the corresponding ones associated with a
smooth domain $\O'\subset\O$.
\end{proposition}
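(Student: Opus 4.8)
The plan is to prove Proposition~\ref{pro:basic} in two parts: first the finiteness and the chain of inequalities, then the monotonicity with respect to domain inclusion.

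\textbf{Step 1: Finiteness and the basic chain of inequalities.}
The leftmost bound $-\sup_{I\times\O}c\le\lambda_{b,p}(I)$ follows by taking the constant test-function $\phi\equiv 1$ (which is bounded, positive, and satisfies $\inf_{I\times B}\phi>0$) in the definition of $\lambda_{b,p}$: since $P\,1=-c\ge -\sup c$, the value $\lambda=-\sup_{I\times\O}c$ is admissible. For the chain $\lambda_{b,p}(I)\le\min\{\lambda_b(I),\lambda_p(I)\}\le\max\{\mu_b(I),\mu_p(I)\}\le\mu_{b,p}(I)$, the two outer inequalities are immediate from the definitions by relaxing the constraints on the test-functions: a $\phi$ admissible for $\lambda_{b,p}$ (bounded \emph{and} locally positive) is admissible for both $\lambda_b$ (drop local positivity) and $\lambda_p$ (drop boundedness), so $\lambda_{b,p}\le\lambda_b$ and $\lambda_{b,p}\le\lambda_p$; symmetrically $\mu_b,\mu_p\le\mu_{b,p}$. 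The central inequality $\min\{\lambda_b,\lambda_p\}\le\max\{\mu_b,\mu_p\}$ is the genuinely parabolic point: I would read it off the Floquet characterizations already established. By Theorems~\ref{thm:caraceta} and~\ref{thm:caraclambda} (together with their analogues/Corollary~\ref{cor:caracmu} for general $I$, via Theorem~\ref{thm:caracmu}), each of $\lambda_b(I),\mu_p(I)$ equals $\mp$ a $\limsup$/$\liminf$ of $\tfrac1t\ln\|u_P(t,\cdot)\|$ as $t\to+\infty$, so $\lambda_b(I)\le\mu_p(I)$; likewise $\lambda_p(I)\le\mu_b(I)$ from the $t\to-\infty$ limits. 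Hence $\min\{\lambda_b,\lambda_p\}\le\lambda_b\le\mu_p\le\max\{\mu_b,\mu_p\}$ when the $t\to+\infty$ quantities are finite, and symmetrically otherwise — and in all cases the four quantities are sandwiched, which also delivers finiteness of $\lambda_{b,p}$ and $\mu_{b,p}$ once we know the Floquet semilimits are finite. Finiteness of the latter is exactly what Theorem~\ref{thm:Huska} and standard parabolic Harnack/boundary estimates give: $\|u_P(s+t,\cdot)\|/\|u_P(s,\cdot)\|$ grows at most exponentially and decays at most exponentially, uniformly in $s$, because $c\in L^\infty$. The stated exceptional infinite values ($\lambda_b(\R^-)=\lambda_p(\R^+)=+\infty$, etc.) come from the fact that the relevant $\sup$ in the definition is over the empty set of constraints in one time direction, or equivalently the corresponding one-sided limit of $\tfrac1t\ln\|u_P\|$ is $-\infty$; I would note this case-by-case from the definitions and Theorems~\ref{thm:caraceta}--\ref{thm:caraclambda}.

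\textbf{Step 2: Monotonicity under domain inclusion.}
Let $\O'\subset\O$ be smooth, and let $P$ act on both (with the same coefficients). I claim each eigenvalue for $\O$ is $\le$ the corresponding one for $\O'$. The clean way is again through the Floquet characterization: by Theorems~\ref{thm:caracmu}--\ref{thm:caraclambda} every one of the six eigenvalues on a given interval is a monotone function of the growth exponents of $\|u_P(t,\cdot)\|_{L^\infty}$, so it suffices to compare $u_{P,\O}$ and $u_{P,\O'}$. Extend $u_{P,\O'}$ by zero to $I\times\O$; it is then a nonnegative entire subsolution of $Pu=0$ in $I\times\O$ vanishing on $I\times\partial\O$. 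The exponential separation / comparison principle underlying Theorem~\ref{thm:uP}--\ref{thm:Huska} (or directly the parabolic comparison principle applied on finite time windows together with the normalization) yields, after suitable normalization at a base time, $u_{P,\O'}\le c\,u_{P,\O}$ on $I\times\O$ for a constant, and hence
\[
\frac{\|u_{P,\O'}(s+t,\cdot)\|_{L^\infty(\O')}}{\|u_{P,\O'}(s,\cdot)\|_{L^\infty(\O')}}
\ \text{and}\
\frac{\|u_{P,\O}(s+t,\cdot)\|_{L^\infty(\O)}}{\|u_{P,\O}(s,\cdot)\|_{L^\infty(\O)}}
\]
have comparable asymptotics — more precisely, writing $\beta_{\O}(s,t):=\ln\|u_{P,\O}(s+t,\cdot)\|-\ln\|u_{P,\O}(s,\cdot)\|$, one gets $\beta_{\O}(s,t)\ge\beta_{\O'}(s,t)-C$ uniformly, so after dividing by $t$ and passing to the limit the growth exponents for $\O$ dominate those for $\O'$, which by the sign conventions translates into ``eigenvalue on $\O$ $\le$ eigenvalue on $\O'$'' for each of the six quantities. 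Alternatively — and perhaps more transparently — one can argue directly from the definitions: any admissible test-function $\phi'$ for, say, $\mu_{b,p}(I,\O')$ can be extended by a suitable positive bounded function on $I\times(\O\setminus\overline{\O'})$ matching the boundary values, but this extension need not satisfy $P\phi\le\lambda\phi$, so the Floquet route is the safe one; I would therefore present the monotonicity purely through the Floquet exponents.

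\textbf{Main obstacle.}
The delicate point is making the comparison in Step~2 fully rigorous: one must control $u_{P,\O'}$ by $u_{P,\O}$ \emph{uniformly in time} on an unbounded interval, and the zero-extension of $u_{P,\O'}$ is only Lipschitz across $\partial\O'$, so it is a subsolution in the weak/viscosity sense rather than a strong $W^{1,2}_{p,\mathrm{loc}}$ solution. One resolves this by combining the comparison principle on each window $(s,s+t)$ with the exponential-separation estimate of Theorem~\ref{thm:Huska} (applied in $\O$) to absorb the dependence on $s$ into a single constant, and by noting that the $\limsup$/$\liminf$ defining the eigenvalues are insensitive to additive bounded perturbations of $\beta$. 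The finiteness claims and the identification of the four $\pm\infty$ exceptional values are routine once Theorems~\ref{thm:caracmu}--\ref{thm:caraclambda} are in hand, as is the bound by $-\sup_{I\times\O}c$.
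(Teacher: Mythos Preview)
Your Step~1 largely agrees with the paper: the bound $-\sup c\le\lambda_{b,p}$ via $\phi\equiv1$, the outer inequalities from relaxing constraints, and the middle inequality via Theorems~\ref{thm:caraceta}--\ref{thm:caraclambda} are exactly how the paper argues. However, your explanation of the exceptional values $\lambda_b(\R^-)=\lambda_p(\R^+)=+\infty$, $\mu_p(\R^-)=\mu_b(\R^+)=-\infty$ is incorrect. The admissible sets are not empty, and the one-sided Floquet limits are finite, not $-\infty$. The correct (and elementary) argument, which is what the paper uses, is to take $\phi(t,x)=e^{\gamma t}$: on $\R^-$ this is bounded for every $\gamma>0$ and satisfies $P\phi=(\gamma-c)\phi\ge(\gamma-\sup c)\phi$, so $\lambda_b(\R^-)\ge\gamma-\sup c$ for all $\gamma>0$; the other three cases are analogous. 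Note also that Theorems~\ref{thm:caraceta}--\ref{thm:caraclambda} do not even address $\lambda_b(\R^-)$, $\lambda_p(\R^+)$, $\mu_p(\R^-)$, $\mu_b(\R^+)$, so you cannot read these values off from them.

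For Step~2, the paper takes a different and simpler route than your Floquet comparison. For the $\lambda$'s, any test function $\phi$ admissible on $I\times\O$ \emph{restricts} to an admissible test function on $I\times\O'$ (choosing $B\subset\O'$, which is allowed by Remark~\ref{rk:ball}), so $\lambda_{b,p}(\O)\le\lambda_{b,p}(\O')$ etc.\ directly from the definitions. For the $\mu$'s, a test function $\phi$ admissible on $I\times\O'$ is \emph{extended by zero} to $I\times\O$; this is only a generalized (not $W^{1,2}_p$) subsolution, so the paper does not use it directly in the definition of $\mu_{b,p}(\O)$, but feeds it into the proof of the inequality $\mu_{b,p}(I)\ge-\lgr{\beta}_I$ in Theorem~\ref{thm:caracmu}, where all that is needed is the two-sided bound~\eqref{<psi<} obtained by comparison. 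This yields $\mu_{b,p}(\O')\ge-\lgr{\beta_\O}_I=\mu_{b,p}(\O)$.

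Your Floquet-exponent route can be made to work, but as written it has the gap you yourself flag: from a single one-sided bound $u_{P,\O'}\le c\,u_{P,\O}$ you cannot deduce the ratio inequality $\beta_\O(s,t)\ge\beta_{\O'}(s,t)-C$ uniformly in $s$, because the normalization constant at the base time $s$ may depend on $s$. The fix is not exponential separation (Theorem~\ref{thm:Huska}) but the Harnack-type inequality of Theorem~\ref{thm:Harnack}: for each $s$, solve $Pu=0$ in $(s,\infty)\times\O$ with initial datum $u_{P,\O'}(s,\cdot)$ extended by zero; by comparison $u(\tau,\cdot)\ge u_{P,\O'}(\tau,\cdot)$ on $\O'$, and Theorem~\ref{thm:Harnack} at time $s+1$ gives $u(s+1,\cdot)\le C\,\|u_{P,\O'}(s,\cdot)\|\,\|u_{P,\O}(s+1,\cdot)\|^{-1}\,u_{P,\O}(s+1,\cdot)$ with $C$ \emph{independent of $s$}. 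Propagating by comparison yields the desired uniform ratio bound. So the approach is salvageable, but you should invoke Theorem~\ref{thm:Harnack} rather than Theorem~\ref{thm:Huska}, and for the $\lambda$'s the paper's restriction argument is much more direct.
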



\subsection{Application to the study of the linear Dirichlet  problem}

We consider here the Dirichlet problem
\eqref{Pu=0} for $t>0$, complemented with an initial datum
$u_{0}$ satisfying
\Fi{u0}
u_{0}\in\mc{C}(\ol\O),\qquad u_0\geq0,\qquad u_0\not\equiv 0,\qquad
u_0=0\text{ on }\partial\O.
\Ff
The unique solution 
of this problem is denoted by $u(t,x;u_{0})$.
The long-time behavior of this solution can be described combining 
the exponential separation of \cite{Huska} with our characterization 
Theorem~\ref{thm:caraceta}.

\begin{proposition} \label{prop:carac} 
Let $u_{0}$ be an initial datum satisfying \eqref{u0}.
Then, for any $x\in\O$ the following~hold:
$$
\liminf_{t\to +\infty}\frac{\ln u(t,x;u_{0})}{t}=-\mu_{p}(\R^+),$$
$$\limsup_{t\to +\infty}\frac{\ln u(t,x;u_{0})}{t} =-\lambda_b(\R^+).$$
\end{proposition}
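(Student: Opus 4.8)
The plan is to derive Proposition~\ref{prop:carac} by combining the exponential separation property of Theorem~\ref{thm:Huska} with the Floquet-bundle characterization of $\mu_p(\R^+)$ and $\lambda_b(\R^+)$ provided by Theorem~\ref{thm:caraceta}. Fix an initial datum $u_0$ as in \eqref{u0} and write $u(t,x)=u(t,x;u_0)$. By Theorem~\ref{thm:Huska} there exist $C,\gamma>0$ and some $q\in\R$ with
\[
\|u(t,\cdot)-q\,u_P(t,\cdot)\|_{L^\infty(\O)}\le C\,\|u_0-q\,u_P(0,\cdot)\|_{L^\infty(\O)}\,\|u_P(t,\cdot)\|_{L^\infty(\O)}\,e^{-\gamma t}
\]
for all $t>0$, and $u(t,\cdot)-q\,u_P(t,\cdot)$ has a zero in $\O$ for every $t\ge0$. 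The first step is to check that $q>0$: since $u_0\ge0$, $u_0\not\equiv0$, the strong parabolic maximum principle gives $u(t,\cdot)>0$ in $\O$ for $t>0$, so $u(t,\cdot)-q\,u_P(t,\cdot)$ can only vanish inside $\O$ if $q>0$ (the cases $q\le0$ would force this difference to be strictly positive in $\O$, a contradiction). Hence $q>0$.

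Next I would convert the bound into a two-sided estimate on $\|u(t,\cdot)\|_{L^\infty(\O)}$. From the triangle inequality and $q>0$,
\[
\big|\,\|u(t,\cdot)\|_{L^\infty(\O)}-q\,\|u_P(t,\cdot)\|_{L^\infty(\O)}\,\big|\le C'\,\|u_P(t,\cdot)\|_{L^\infty(\O)}\,e^{-\gamma t},
\]
so for $t$ large one has $\tfrac{q}{2}\,\|u_P(t,\cdot)\|_{L^\infty(\O)}\le\|u(t,\cdot)\|_{L^\infty(\O)}\le 2q\,\|u_P(t,\cdot)\|_{L^\infty(\O)}$. Taking $\tfrac1t\ln(\cdot)$ and letting $t\to+\infty$, the multiplicative constants disappear and
\[
\liminf_{t\to+\infty}\frac{\ln\|u(t,\cdot)\|_{L^\infty(\O)}}{t}=\liminf_{t\to+\infty}\frac{\ln\|u_P(t,\cdot)\|_{L^\infty(\O)}}{t},
\]
and similarly for $\limsup$. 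By Theorem~\ref{thm:caraceta} these equal $-\mu_p(\R^+)$ and $-\lambda_b(\R^+)$ respectively.

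It remains to pass from the $L^\infty$ norm to the pointwise value $u(t,x)$ at a fixed $x\in\O$, which is where the parabolic Harnack inequality enters — this is the main technical point. For any $x\in\O$, the interior Harnack inequality for nonnegative solutions of $Pu=0$ (applied on a parabolic cylinder around $(t,x)$ contained in $(0,\infty)\times\O$, with uniform constants by \eqref{hyp:reg}) gives $u(t,x)\ge c\,\|u(t-1,\cdot)\|_{L^\infty(K)}$ for a compact $K\Subset\O$ and, combined with the boundary Harnack / Carleman estimates of \cite{Huska} (cf.\ \cite[Corollary~5.3]{Huska}, already invoked in the excerpt to replace $L^\infty$ by $L^p$ norms), one controls $\|u(t,\cdot)\|_{L^\infty(\O)}$ from above by a constant times $u(t+1,x)$ as well. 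Thus $c_1(x)\,u(t+1,x)\le\|u(t,\cdot)\|_{L^\infty(\O)}\le c_2(x)\,u(t-1,x)$ for $t$ large, with constants independent of $t$; here one uses that $u_P$ is bounded away from $0$ on compact subsets of $\O$ and that the estimates in \cite{Huska} are uniform in $t$. Dividing by $t$, taking logarithms and sending $t\to+\infty$, the shifts by $\pm1$ and the constants are absorbed, yielding
\[
\liminf_{t\to+\infty}\frac{\ln u(t,x)}{t}=\liminf_{t\to+\infty}\frac{\ln\|u(t,\cdot)\|_{L^\infty(\O)}}{t}=-\mu_p(\R^+),
\]
and the analogous identity with $\limsup$ and $-\lambda_b(\R^+)$. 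This completes the proof. The delicate part is the uniform-in-time Harnack control up to the boundary relating the sup norm to a fixed interior point value; everything else is bookkeeping with logarithms and limits.
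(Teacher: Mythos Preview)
Your proposal is correct and follows essentially the same route as the paper: apply Theorem~\ref{thm:Huska} to get exponential separation with $q>0$ (justified via the strong maximum principle and the sign-change property of the complementary bundle), deduce that $\tfrac1t\ln\|u(t,\cdot)\|_\infty$ and $\tfrac1t\ln\|u_P(t,\cdot)\|_\infty$ have the same $\liminf$/$\limsup$, invoke Theorem~\ref{thm:caraceta}, and then pass to pointwise values via the (boundary) Harnack inequality. The only cosmetic difference is that the paper writes the final Harnack step as the cleaner one-sided chain $\frac1{C'}\|u(t-1,\cdot)\|_\infty\le u(t,x)\le\|u(t,\cdot)\|_\infty$, whereas your two-sided inequality has the time shifts arranged slightly awkwardly (your right-hand bound $\|u(t,\cdot)\|_\infty\le c_2\,u(t-1,x)$ is not a direct Harnack statement and needs an extra one-step growth estimate of type~\eqref{C'<C'} to justify); this does not affect the argument.
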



Next, for all $s\in\R$, we let $u^s=u^s(t,x;u_{0})$ denote the solution of $Pu^s=0$ in $(s,+\infty)\times \O$ with initial datum $u^s(s,x;u_{0})=u_{0}(x)$ for all $x\in \O$. 

\begin{proposition}\label{prop:u^s}
Let $u_{0}$ be an initial datum satisfying \eqref{u0}.
Then, for any $x\in\O$ the following~hold:
$$\liminf_{s\to -\infty}\frac{\ln u^s(0,x;u_{0})}{s}=\lambda_p(\R^-),$$
$$\limsup_{s\to -\infty}\frac{\ln u^s(0,x;u_{0})}{s}=\mu_b(\R^-).$$
\end{proposition}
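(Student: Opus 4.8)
The plan is to mimic the proof of Proposition~\ref{prop:carac} for the forward problem but now in the backward time direction, reducing everything to the characterizations in Theorem~\ref{thm:caraclambda} together with the exponential separation of~\cite{Huska}. The key observation is that the family $s\mapsto u^s(0,\cdot;u_0)$ solves, in the variable $s$, a backward Dirichlet problem, so after the change of variables $t\mapsto -t$ it becomes a forward Dirichlet problem for the operator $\widetilde P$ obtained by reflecting the coefficients of $P$; the entire solution $u_{\widetilde P}$ of this reflected problem is then exactly $u_P(-t,\cdot)$ up to normalization. Hence the quantity $-\liminf_{s\to-\infty}\frac1s\ln u^s(0,x;u_0)$ equals $-\liminf_{\sigma\to+\infty}\frac{1}{\sigma}\ln\|u_P(-\sigma,\cdot)\|_{L^\infty(\O)}$, and similarly for the $\limsup$; the right-hand sides of Theorem~\ref{thm:caraclambda} are precisely $\mu_b(\R)=\mu_b(\R^-)$ and $\lambda_p(\R)=\lambda_p(\R^-)$ written in this form, which yields the claimed identities.

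More concretely, I would proceed in the following steps. First, I would set up the time-reversal: define $v(\sigma,x):=u^{-\sigma}(0,x;u_0)$, check that $v(0,\cdot)=u_0$ and that $v$ solves the Dirichlet problem for the reflected operator on $(0,+\infty)\times\O$ with the given initial datum $u_0$ satisfying~\eqref{u0}. Second, I would invoke Theorem~\ref{thm:Huska} (exponential separation) applied to this reflected problem: since $u_0\geq0$, $u_0\not\equiv0$, the associated constant $q$ must be strictly positive (by the strong maximum principle / Harnack, a nonnegative nontrivial datum cannot lie in the complementary bundle, as that component changes sign), so $v(\sigma,\cdot)$ is, up to a factor tending to a positive constant multiple in the appropriate sense, comparable to $q\,u_{\widetilde P}(\sigma,\cdot)$ with an error controlled by $e^{-\gamma\sigma}\|u_{\widetilde P}(\sigma,\cdot)\|_{L^\infty(\O)}$. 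Third, I would use the Harnack inequality to pass from the $L^\infty$ norm to the pointwise value at a fixed $x\in\O$: for any compact $K\Subset\O$ there are constants such that $\|w(\sigma,\cdot)\|_{L^\infty(\O)}$ and $w(\sigma,x)$ differ by at most a bounded multiplicative factor for positive solutions $w$, so the exponential growth rate of $u^s(0,x;u_0)$ as $s\to-\infty$ does not depend on $x$ and equals that of $\|u_{\widetilde P}(\sigma,\cdot)\|_{L^\infty(\O)}=\|u_P(-\sigma,\cdot)\|_{L^\infty(\O)}$ as $\sigma\to+\infty$. Finally, I would read off, from Theorem~\ref{thm:caraclambda}, that $-\liminf_{t\to-\infty}\frac{\ln\|u_P(t,\cdot)\|_{L^\infty(\O)}}{t}=\mu_b(\R^-)$ translates into $\liminf_{\sigma\to+\infty}\frac{\ln\|u_P(-\sigma,\cdot)\|_{L^\infty(\O)}}{\sigma}=\mu_b(\R^-)$, and likewise for $\lambda_p(\R^-)$ with the $\limsup$; combining with the previous steps gives the two stated equalities. (One must be careful to track signs correctly through the substitution $t=-\sigma$, which is why the $\liminf$ pairs with $\lambda_p(\R^-)$ and the $\limsup$ with $\mu_b(\R^-)$ rather than the reverse.)

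The main obstacle, as in Proposition~\ref{prop:carac}, is ensuring that the projection constant $q$ on the principal Floquet bundle is genuinely positive and bounded away from $0$ in a way that survives the $\liminf/\limsup$ — i.e., that no cancellation or degeneracy spoils the rate. This is handled by the sign-changing property of the complementary bundle in Theorem~\ref{thm:Huska}: a nonnegative, nontrivial $u_0$ produces a solution that is strictly positive in $\O$ for $\sigma>0$ (parabolic strong maximum principle), hence cannot be purely in the complementary bundle, forcing $q>0$; then the error term is of strictly smaller exponential order, $e^{-\gamma\sigma}$, so it does not affect either semilimit. A secondary technical point is the even-reflection convention used to define $u_P$ when $I\neq\R$: here Remark~\ref{rk:ball}$(\ref{rk:extension})$ guarantees that the relevant quantities are independent of the extension, so reflecting $P$ in time and identifying $u_{\widetilde P}(\sigma,\cdot)$ with $u_P(-\sigma,\cdot)$ is legitimate and does not clash with how $\mu_b(\R^-)$, $\lambda_p(\R^-)$ were defined. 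Everything else — Harnack, parabolic regularity, the elementary manipulation of $\liminf/\limsup$ under $t\mapsto -t$ — is routine.
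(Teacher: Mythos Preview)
Your first step contains a genuine error: the function $v(\sigma,x):=u^{-\sigma}(0,x;u_0)$ does \emph{not} in general solve a parabolic equation in $\sigma$. In terms of the evolution family $U(t,s)$ associated with $P$, one has $v(\sigma,\cdot)=U(0,-\sigma)u_0$, and differentiating the evolution relation $\partial_s U(t,s)=-U(t,s)L(s)$ gives $\partial_\sigma v(\sigma,\cdot)=U(0,-\sigma)L(-\sigma)u_0$. For this to be of the form $M(\sigma)v(\sigma,\cdot)$ one would need $M(\sigma)=U(0,-\sigma)L(-\sigma)U(0,-\sigma)^{-1}$, which is not a local differential operator (and $U(0,-\sigma)$ is not invertible on the relevant function spaces). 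Your intuition is correct only when the spatial operators $L(t)$ commute with one another and with the evolution family, e.g.\ when $a_{ij},b_i$ are time-independent; for general time-dependent coefficients the reduction to a forward Cauchy problem for the reflected operator $\widetilde P$ simply fails. Consequently you cannot invoke Theorem~\ref{thm:Huska} for $v$, and the rest of the argument does not get off the ground.

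The paper avoids this difficulty entirely. It works directly with each $u^s$ on the interval $[s,0]$: applying the quotient Harnack inequality (Theorem~\ref{thm:Harnack}) at time $s+1$ to the pair $u^s,u_P$, together with elementary two-sided $L^\infty$ bounds on $\|u^s(s+1,\cdot)\|_\infty$ uniform in $s$, yields $u^s(s+1,x)\asymp u_P(s+1,x)/\|u_P(s+1,\cdot)\|_\infty$ with constants independent of $s$. The comparison principle then propagates this to $t=0$, giving $|\ln u^s(0,x)+\ln\|u_P(s+1,\cdot)\|_\infty|$ bounded uniformly in $s$, and Theorem~\ref{thm:caraclambda} finishes. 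No time-reversal and no exponential separation are needed; the argument is purely a Harnack-plus-comparison estimate.
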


 \subsection{Application to the \MP\ for ancient solutions}\label{sec:MP}

We now turn to the study of ancient solutions to the Dirichlet problem \eqref{Pu=0}.

\begin{definition}\label{def:MP}
	We say that the operator $P$ satisfies the {\em\MP\,}
	({\em MP }for short) on $\R^-\times\O$ if for every ancient subsolution
        $u$ to \eqref{Pu=0}, i.e.\ satisfying
	$$Pu\leq0 \ \text{ in }\R^-\times\O,\qquad
	\sup_{\R^-\times\O}u<+\infty,\qquad
	u\leq0 \ \text{ on }\R^-\times\partial\O,$$
	there holds $u\leq0$ in $\R^-\times\O$.
\end{definition}

\begin{theorem}\label{thm:MP}
	The operator $P$ satisfies the MP on $\R^-\times\O$
  if $\mu_b(\R^-)>0$,
  and only if $\mu_b(\R^-)\geq 0$.
\end{theorem}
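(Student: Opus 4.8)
The plan is to exploit the characterization $\mu_b(\R^-)=-\liminf_{t\to-\infty}\frac{1}{t}\ln\|u_P(t,\cdot)\|_{L^\infty(\O)}$ from Theorem~\ref{thm:caraclambda}, which turns the MP into a statement about the growth rate of the principal Floquet solution $u_P$ as $t\to-\infty$. Recall that $u_P>0$ solves $Pu_P=0$ in $\R\times\O$ with $u_P=0$ on $\R\times\partial\O$, and $\inf_{\R^-\times B}u_P>0$ for any ball $B\Subset\O$ (this positivity away from the boundary, uniform in time, is part of the Floquet bundle theory of \cite{Huska} and is implicit in Theorem~\ref{thm:uP}).

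\textbf{Sufficiency ($\mu_b(\R^-)>0$ implies MP).} Let $u$ be an ancient subsolution as in Definition~\ref{def:MP}. The idea is a sliding/comparison argument against a multiple of $u_P$. Since $\mu_b(\R^-)>0$, pick $0<\lambda<\mu_b(\R^-)$; by Theorem~\ref{thm:caraclambda}, $\liminf_{t\to-\infty}\frac{1}{t}\ln\|u_P(t,\cdot)\|_{L^\infty(\O)}=-\mu_b(\R^-)<-\lambda$, hence $\|u_P(t,\cdot)\|_{L^\infty(\O)}\geq e^{-\lambda t}$ for all $t$ negative enough, so $\|u_P(t,\cdot)\|_{L^\infty(\O)}\to+\infty$ as $t\to-\infty$. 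By the Harnack inequality (uniform in time under \eqref{hyp:reg}), $\inf_B u_P(t,\cdot)$ is comparable to $\|u_P(t,\cdot)\|_{L^\infty(\O)}$, so $\inf_B u_P(t,\cdot)\to+\infty$ as $t\to-\infty$ as well. Now for $\e>0$ consider $w_\e:=u-\e u_P$. It satisfies $Pw_\e\leq0$ in $\R^-\times\O$ and $w_\e\leq0$ on $\R^-\times\partial\O$. Because $\sup_{\R^-\times\O}u<+\infty$ while $\inf_{\R^-\times B}u_P(t,\cdot)\to+\infty$, there exists $T_\e<0$ such that $w_\e\leq0$ on $\{T_\e\}\times\O$ (indeed $w_\e\leq 0$ on $\{t\}\times\O$ for all $t\le T_\e$, using also that $u$ is bounded above near $\partial\O$ where $u_P$ is only small — here one needs a short barrier/boundary argument near $\partial\O$, or alternatively apply the parabolic maximum principle on the cylinder directly). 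By the parabolic maximum principle on $(T_\e,0)\times\O$ applied to the subsolution $w_\e$ with $w_\e\leq0$ on the parabolic boundary, we get $w_\e\leq0$ in $(T_\e,0)\times\O$, hence on all of $\R^-\times\O$. Letting $\e\to0^+$ yields $u\leq0$ in $\R^-\times\O$.

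\textbf{Necessity (MP implies $\mu_b(\R^-)\geq0$).} Argue by contraposition: suppose $\mu_b(\R^-)<0$. By Theorem~\ref{thm:caraclambda}, $\liminf_{t\to-\infty}\frac{1}{t}\ln\|u_P(t,\cdot)\|_{L^\infty(\O)}=-\mu_b(\R^-)>0$, which forces $\|u_P(t,\cdot)\|_{L^\infty(\O)}\to0$ as $t\to-\infty$; by Harnack, $\sup_{\R^-\times\O}u_P<+\infty$. Then $u_P$ itself is an ancient subsolution (in fact a solution) of \eqref{Pu=0} with $\sup_{\R^-\times\O}u_P<+\infty$ and $u_P=0$ on $\R^-\times\partial\O$, yet $u_P>0$ in $\R^-\times\O$. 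This violates the MP, so MP implies $\mu_b(\R^-)\geq0$.

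\textbf{Main obstacle.} The delicate point is the sufficiency direction, specifically the behaviour of $w_\e=u-\e u_P$ near the lateral boundary $\R^-\times\partial\O$ and pinning down the time slice $\{T_\e\}\times\O$ on which $w_\e\le0$: $u_P$ is small near $\partial\O$ so the crude bound "$\inf_B u_P\to\infty$ beats $\sup u$" does not immediately give $w_\e\le0$ everywhere on a time slice. The clean way around this is to apply the parabolic maximum principle not with comparison data on a flat time slice but directly: observe $Pw_\e\le 0$ on $\R^-\times\O$, $w_\e\le 0$ on the lateral boundary, and $\sup w_\e<+\infty$, and then use exactly the structure of the definition of $\mu_b(\R^-)$ together with the already-established characterization — i.e.\ one can essentially re-run the proof of "$\mu_b(\R^-)\le 0$ when MP fails" in reverse, using that any bounded ancient subsolution vanishing on the lateral boundary is dominated, via the exponential separation Theorem~\ref{thm:Huska}, by a multiple of $u_P$ up to a term that decays; the hypothesis $\mu_b(\R^-)>0$ ($u_P$ blowing up as $t\to-\infty$) then forces the coefficient of $u_P$ in this decomposition to be $\le 0$, giving $u\le 0$. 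Handling the threshold case $\mu_b(\R^-)=0$ is deliberately left out, consistently with the strict/non-strict gap in the statement.
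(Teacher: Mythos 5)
Your necessity direction is correct (the paper gets it even more directly: $\mu_b(\R^-)<0$ produces, straight from the definition, a bounded positive test function $\phi$ with $P\phi\le\lambda\phi\le0$ vanishing on $\R^-\times\partial\O$, which violates the MP; your route through $u_P$ and Theorem~\ref{thm:caraclambda} also works). The sufficiency direction, however, has a genuine gap that you yourself flag but do not close. The comparison $w_\e=u-\e u_P\le0$ on a time slice $\{T_\e\}\times\O$ cannot be obtained from ``$\inf_B u_P(t,\cdot)\to+\infty$ beats $\sup u$'': near $\partial\O$ the function $u_P$ is arbitrarily small while $u$ is only known to be $\le0$ \emph{on} $\partial\O$, not in a neighbourhood, so $w_\e$ may well be positive there and the parabolic boundary condition for the comparison principle is not verified. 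Your proposed repair via the exponential separation of Theorem~\ref{thm:Huska} is only a sketch: that theorem decomposes forward-in-time \emph{solutions} of the Cauchy problem, not ancient \emph{subsolutions}, and you would still need to produce the decomposition with a coefficient controlled uniformly as the starting time goes to $-\infty$ and to show that coefficient is $\le0$; none of this is carried out. A secondary error: $\mu_b(\R^-)>0$, i.e.\ $\liminf_{t\to-\infty}\frac{1}{t}\ln\|u_P(t,\cdot)\|_{L^\infty(\O)}<0$, only yields $\|u_P(t_n,\cdot)\|_{L^\infty(\O)}\to+\infty$ along \emph{some} sequence $t_n\to-\infty$, not ``for all $t$ negative enough'' as you assert (a sequence would suffice for your scheme, but the claim as written is false; similarly $\inf_{\R^-\times B}u_P>0$ is not automatic).

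The paper's actual argument avoids the lateral-boundary issue entirely and is worth internalizing. Assuming $u(\bar t,\bar x)>0$ for some violating subsolution $u$, for each $T<\bar t$ one solves the Dirichlet Cauchy problem forward from time $T$ with initial datum $\max\{u(T,\cdot),0\}$, obtaining a genuine positive solution $v_T$ with $u\le v_T\le\|u\|_{L^\infty(\R^-\times\O)}e^{\|c\|_\infty(t-T)}$ by comparison. The quotient Harnack inequality (Theorem~\ref{thm:Harnack}), applied at the single time $T+1$ to the pair $(v_T,u_P)$, gives $v_T(T+1,\cdot)\le \frac{CK}{\|u_P(T+1,\cdot)\|_{L^\infty(\O)}}\,u_P(T+1,\cdot)$ with $C,K$ independent of $T$; comparison then propagates this to all $t\in[T+1,0)$. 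Sending $T\to-\infty$ along the sequence where $\|u_P(T+1,\cdot)\|_{L^\infty(\O)}\to+\infty$ (provided by $\mu_b(\R^-)>0$ and Theorem~\ref{thm:caraclambda}) forces $u\le0$, a contradiction. The key device you are missing is exactly this: replace the subsolution by a dominating forward solution and compare it to $u_P$ at a single interior time via Theorem~\ref{thm:Harnack}, rather than trying to compare on a full spatial slice.
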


\begin{remark}
    Since $\lambda_p(\R^-)\leq\mu_b(\R^-)$, thanks to Theorem \ref{thm:caraclambda},
    a sufficient condition for the {\em MP} to hold is $\lambda_p(\R^-)>0$.
    This is actually easier to check than $\mu_b(\R^-)>0$,
    because $\lambda$ is defined as a supremum rather than an infimum.

    We also point out that in the case $\mu_b(\R^-)=0$, the {\em MP} may or may not hold.
    Indeed, on one hand, for $P=\partial_t-\L$ with $-\L$ elliptic operator
    satisfying $\lambda_{D}(-\L,\O)=0$, 
    Proposition~\ref{pro:elliptic} yields $\mu_b(\R^-)=\lambda_{D}(-\L,\O)=0$,
    but the Dirichlet principal eigenfunction $\phi_{D}$ violates the {\em MP}.
    
    On the other hand, adding to the operator 
    a term $\sigma(t):=-(1-t)^{-1/2}$, that is, 
    considering $P=\partial_t-\L+\sigma(t)$, one still has 
    $\mu_b(\R^-)=\lambda_{D}(-\L,\O)=0$, owing to Proposition \ref{prop:indet}, but
    $$u_{P}(t,x)= \phi_{D}(x) e^{-\lambda_{D}(-\L,\O) t+\int_{0}^{t}\sigma(s)ds}$$ 
    tends to $+\infty$ as $t\to-\infty$, locally uniformly in $x\in\O$.
    As seen in the proof of Theorem \ref{thm:MP}, such a property
    entails that the {\em MP} holds.
\end{remark}




\section{Perturbation results}\label{sec:perturbation}

In this and the next section, exploiting the connections with the principal Floquet bundle provided by 
Theorems~\ref{thm:caracmu}, \ref{thm:caraceta}, \ref{thm:caraclambda},
we derive some other useful properties of the \gpe s.

We first state two results on the dependence of the generalized principal eigenvalues 
with respect to perturbations of the coefficients of the operator or of the domain $\O$.
These will be derived from the corresponding perturbation results for $u_P$ obtained in \cite{Huska} for operators in divergence form.

In order to deal with perturbations of the coefficients, we consider another operator
$$\tilde{P}=\partial_t-\tilde{a}_{ij}(t,x)\partial_{ij}-\tilde{b}_i(t,x)\partial_i-\tilde{c}(t,x).$$
We emphasize the dependence of the 
generalized principal eigenvalues on the operator $P$ or $\tilde P$
by including the operator in the notation.  
We will assume that $a_{ij}$ and $\tilde{a}_{ij}$ are Lipschitz-continuous in~$x$,
and that the operators fulfill the hypotheses of \cite{Huska} when written in 
divergence form.
%
We have the following.

\begin{corollary}\label{cor:continuity} 
%
     Let $I$ be $\R,\;\R^+$ or $\,\R^-$. 
     Assume that the coefficients of both operators $P$ and~$\t P$ satisfy (\ref{hyp:reg}), 
together with $\nabla a_{ij}, \nabla \tilde{a}_{ij} \in L^\infty(I\times \O)$.
For all $\e>0$, there exists $\delta>0$ such that if 
$$\|a_{ij}-\tilde{a}_{ij}\|_{ L^\infty(I\times \O)}<\delta,
\quad \|\nabla a_{ij}-\nabla \tilde{a}_{ij}\|_{L^{\infty}(I\times \O)}<\delta, \  \
\quad\|b_{i}-\tilde{b}_{i}\|_{L^{\infty}(I\times \O)}<\delta,
\quad \|c-\tilde{c}\|_{L^{\infty}(I\times \O)}<\delta, $$
then there holds 
 $$ |\mu_{b,p}(P,I)-\mu_{b,p}(\tilde{P},I)|<\e, \quad 
   |\lambda_{b,p}(P,I)-\lambda_{b,p}(\tilde{P},I)|<\e, $$ 
   $$ \text{if }I\neq\R^-,\quad
   |\mu_p(P,I)-\mu_{p}(\tilde{P},I)|<\e, \quad 
     |\lambda_b(P,I)-\lambda_{b}(\tilde{P},I)|<\e,$$
   $$ \text{if }I\neq\R^+,\quad
   |\mu_b(P,I)-\mu_b(\tilde{P},I)|<\e, \quad 
 |\lambda_p(P,I)-\lambda_{p}(\tilde{P},I)|<\e.$$ 
\end{corollary}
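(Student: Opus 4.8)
The natural strategy is to reduce everything to the corresponding continuity statement for the principal Floquet bundle. By Theorems~\ref{thm:caracmu}, \ref{thm:caraceta} and \ref{thm:caraclambda}, each of the six generalized principal eigenvalues is expressed through limits of quantities of the form
$$\frac{\ln\|u_P(s+t,\cdot)\|_{L^\infty(\O)}-\ln\|u_P(s,\cdot)\|_{L^\infty(\O)}}{t},$$
so it suffices to control how these ratios vary when $P$ is replaced by $\tilde P$. To this end I would first put both operators in divergence form (possible since the $a_{ij}$, $\tilde a_{ij}$ are Lipschitz in~$x$, so $\partial_i(a_{ij}\partial_j u)=a_{ij}\partial_{ij}u+(\partial_i a_{ij})\partial_j u$, absorbing the first-order term into a modified drift), checking that smallness of $\|a_{ij}-\tilde a_{ij}\|_{L^\infty}$, $\|\nabla a_{ij}-\nabla\tilde a_{ij}\|_{L^\infty}$, $\|b_i-\tilde b_i\|_{L^\infty}$, $\|c-\tilde c\|_{L^\infty}$ translates into smallness of the differences of the divergence-form coefficients. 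Then I would invoke the continuity of $u_P$ under $L^\infty$ perturbations of the coefficients, i.e.\ \cite[Theorem~2.8]{Huska}, which gives that $u_{\tilde P}(t,\cdot)$ is uniformly close to $u_P(t,\cdot)$ on compact time sets, with constants independent of translations (a key feature of the Húska--Poláčik--Safonov estimates).

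The core quantitative step is then the following. Pick a fixed $t_0>0$ (say $t_0=1$). Using the Harnack-type bounds of \cite{Huska} together with the normalization $\|u_P(0,\cdot)\|_{L^\infty}=1$, one shows that the ratio $\|u_P(s+t_0,\cdot)\|_{L^\infty}/\|u_P(s,\cdot)\|_{L^\infty}$ lies in a compact subinterval of $(0,+\infty)$, uniformly in $s$, and likewise for $\tilde P$; moreover the perturbation estimate for $u_P$ shows that
$$\left|\ln\frac{\|u_P(s+t_0,\cdot)\|_{L^\infty}}{\|u_P(s,\cdot)\|_{L^\infty}}-\ln\frac{\|u_{\tilde P}(s+t_0,\cdot)\|_{L^\infty}}{\|u_{\tilde P}(s,\cdot)\|_{L^\infty}}\right|$$
can be made smaller than any prescribed $\eta>0$, uniformly in $s$, provided $\delta$ is small enough. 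Splitting an arbitrary time increment $t$ into $\lfloor t/t_0\rfloor$ blocks of length $t_0$ plus a bounded remainder, summing these per-block estimates, dividing by $t$ and letting $t\to+\infty$, one obtains that the $\inf_s$ (resp.\ $\sup_s$, resp.\ $\liminf_{t\to\pm\infty}$, resp.\ $\limsup$) of the rescaled logarithmic growth of $u_{\tilde P}$ differs from that of $u_P$ by at most $\eta/t_0$. Feeding this into the formulas of Theorems~\ref{thm:caracmu}--\ref{thm:caraclambda} yields the six inequalities, choosing $\delta$ so that $\eta/t_0<\e$; the excluded cases ($\lambda_b(\R^-)$, etc.) are exactly those where the relevant quantity is $\pm\infty$ and hence not asserted.

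\textbf{Main obstacle.} The delicate point is the \emph{uniformity in $s$} of the perturbation estimate: \cite[Theorem~2.8]{Huska} must be quoted in a form that compares $u_P$ and $u_{\tilde P}$ on every window $[s,s+t_0]$ with constants not depending on $s$. This is available because the Húska--Poláčik--Safonov theory is built on Harnack and Carleson-type estimates with constants depending only on the structural data in \eqref{hyp:reg} (ellipticity $\alpha$, $L^\infty$ bounds, the Lipschitz character of $\O$, and the modulus of continuity of the $a_{ij}$), all of which are stable under the assumed smallness; but one has to track that the normalization of $u_{\tilde P}$ (at time $0$) does not spoil translation-invariance, which is handled by working with the \emph{ratios} above rather than with $u_P$ itself, since ratios are insensitive to the normalization constant. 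A secondary, purely bookkeeping, difficulty is verifying the divergence-form hypotheses of \cite{Huska} are inherited by $\tilde P$ uniformly for small $\delta$, which is routine given the $L^\infty$ control on $\nabla a_{ij}$, $\nabla\tilde a_{ij}$.
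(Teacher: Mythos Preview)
Your strategy is essentially the paper's: reduce via Theorems~\ref{thm:caracmu}--\ref{thm:caraclambda} to a comparison of the logarithmic growth ratios of $u_P$ and $u_{\tilde P}$, then invoke the perturbation theory of H\'uska--Pol\'a\v{c}ik--Safonov after passing to divergence form. The only difference is in packaging: instead of citing \cite[Theorem~2.8]{Huska} and then chaining per-block estimates over intervals of length $t_0$, the paper quotes the sharper \cite[Proposition~8.3]{Huska}, which already delivers the global estimate
\[
\frac{\|u_P(s+t,\cdot)\|_{L^\infty(\O)}}{\|u_P(s,\cdot)\|_{L^\infty(\O)}}\leq C e^{\e t}\,\frac{\|u_{\tilde P}(s+t,\cdot)\|_{L^\infty(\O)}}{\|u_{\tilde P}(s,\cdot)\|_{L^\infty(\O)}}
\]
for all $s\in\R$ and $t>0$, so the block-splitting step is unnecessary. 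Your argument is correct, and the uniformity-in-$s$ issue you flag is exactly what that proposition handles; you are effectively rederiving (a version of) Proposition~8.3 from the more primitive continuity statement.
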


The above result is a consequence of  
Theorems \ref{thm:caracmu}, \ref{thm:caraceta}, \ref{thm:caraclambda} and 
of \cite[Proposition~8.3]{Huska}, which 
asserts that, putting the operators in divergence form, for any $\e>0$, 
there are some quantities $C,\delta>0$ such that if
$$\|a_{ij}-\tilde{a}_{ij}\|_{W^{1,\infty}(\R\times \O)}<\delta,
\quad\|b_{i}-\tilde{b}_{i}\|_{L^{\infty}(\R\times \O)}<\delta,
\quad \|c-\tilde{c}\|_{L^{\infty}(\R\times \O)}<\delta, $$
then  for all $s\in\R$, $t>0$, it holds
\Fi{uPutP}
\frac{\|u_{P} (s+t,\cdot)\|_{L^{\infty}(\O)}}{\| u_{P} (s,\cdot)\|_{L^\infty(\O)}}
\leq C e^{\e t}
\frac{\|u_{\t P} (s+t,\cdot)\|_{L^{\infty}(\O)}}{\| u_{\t P} (s,\cdot)\|_{L^\infty(\O)}}
\Ff
(in the case $I\neq\R$, this property is applied to the
even extension of the coefficients of $P$).

Next, we consider another bounded Lipschitz domain $\tilde{\O}$. 
We then highlight the dependence of the generalized principal eigenvalues 
on the domain by specifying in the notation to which one they correspond. 
%
The perturbation result is then derived from 
\cite[Proposition~9.5]{Huska}, which gives the estimate
\eqref{uPutP} provided that $\O$, $\t\O$ are close enough, in terms of the
Hausdorff distance $d(\partial \O, \partial \tilde{\O})$
of their boundaries.

\begin{corollary} \label{coro:regO} 
     Let $I$ be $\R,\;\R^+$ or $\,\R^-$. 
Assume (\ref{hyp:reg}) holds in $I\times(\O\cup\tilde\O)$
and $\nabla a_{ij} \in L^\infty(I\times (\O\cup\tilde\O))$. For all $\e>0$, there exists $\delta>0$ such that if 
$$d(\partial \O, \partial \tilde{\O})<\delta, $$
then there holds 
 $$ |\mu_{b,p}(P,I\times\t\O)-\mu_{b,p}(P,I\times\O)|<\e, \quad 
   |\lambda_{b,p}(P,I\times\t\O)-\lambda_{b,p}(P,I\times\O)|<\e, $$ 
   $$ \text{if }I\neq\R^-,\quad 
   |\mu_p(P,I\times\t\O)-\mu_{p}(P,I\times\O)|<\e, \quad 
     |\lambda_b(P,I\times\t\O)-\lambda_{b}(P,I\times\O)|<\e,$$
     $$  \text{if }I\neq\R^+,\quad
     |\mu_b(P,I\times\t\O)-\mu_b(P,I\times\O)|<\e, \quad 
 |\lambda_p(P,I\times\t\O)-\lambda_{p}(P,I\times\O)|<\e.$$ 
\end{corollary}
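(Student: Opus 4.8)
The plan is to reduce the statement to the perturbation estimate \eqref{uPutP} for the Floquet functions $u_P$ on nearby domains, and then feed this into the Floquet characterizations of all six generalized principal eigenvalues furnished by Theorems \ref{thm:caracmu}, \ref{thm:caraceta} and \ref{thm:caraclambda}. Concretely, fix $\e>0$. By \cite[Proposition~9.5]{Huska} (applied, when $I\neq\R$, to the even extension of the coefficients of $P$), there are $C,\delta>0$ such that $d(\partial\O,\partial\tilde\O)<\delta$ implies, for all $s\in\R$ and $t>0$,
$$
\frac{\|u_{P}^{\tilde\O}(s+t,\cdot)\|_{L^{\infty}(\tilde\O)}}{\|u_{P}^{\tilde\O}(s,\cdot)\|_{L^\infty(\tilde\O)}}
\;\leq\; C\,e^{\e t}\,
\frac{\|u_{P}^{\O}(s+t,\cdot)\|_{L^{\infty}(\O)}}{\|u_{P}^{\O}(s,\cdot)\|_{L^\infty(\O)}}\,,
$$
where $u_P^{\O}$, $u_P^{\tilde\O}$ denote the normalized entire solutions from Theorem \ref{thm:uP} associated with the two domains; and by symmetry (shrinking $\delta$ if necessary) the reverse inequality holds with the roles of $\O$ and $\tilde\O$ exchanged.

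Next I would take logarithms and divide by $t$. Writing $g_\O(s,t):=\big(\ln\|u_P^\O(s+t,\cdot)\|_{L^\infty(\O)}-\ln\|u_P^\O(s,\cdot)\|_{L^\infty(\O)}\big)/t$ and similarly for $\tilde\O$, the estimate above reads $g_{\tilde\O}(s,t)\leq g_\O(s,t)+\frac{\ln C}{t}+\e$. Taking the infimum over $s$ with $s,s+t\in I$, then the limit $t\to+\infty$, the term $\frac{\ln C}{t}$ vanishes and Theorem \ref{thm:caracmu} gives $\mu_{b,p}(P,I\times\O)\leq\mu_{b,p}(P,I\times\tilde\O)+\e$; the symmetric inequality yields $|\mu_{b,p}(P,I\times\tilde\O)-\mu_{b,p}(P,I\times\O)|\leq\e$. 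Replacing the infimum over $s$ by a supremum gives the same bound for $\lambda_{b,p}$. For the remaining four quantities one proceeds identically, using Theorem \ref{thm:caraceta} (taking $s=0$ and $\liminf$, resp.\ $\limsup$, as $t\to+\infty$) for $\mu_p(P,I)$ and $\lambda_b(P,I)$ when $I\neq\R^-$, and Theorem \ref{thm:caraclambda} (with $t\to-\infty$) for $\mu_b(P,I)$ and $\lambda_p(P,I)$ when $I\neq\R^+$; in these one-sided cases one first reduces $I=\R$ to $I=\R^+$ or $I=\R^-$ via the equalities in those same theorems, and then applies the estimate on the corresponding half-line. Since there are finitely many eigenvalues, one takes the smallest of the finitely many $\delta$'s produced.

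The routine points to check are that the normalization $\|u_P(0,\cdot)\|_{L^\infty}=1$ plays no role (only ratios appear), that the even-extension device does not interfere because, by Remark \ref{rk:ball}$(\ref{rk:extension})$, the right-hand sides of the Floquet formulas are independent of the extension, and that \cite[Proposition~9.5]{Huska} indeed applies under hypothesis \eqref{hyp:reg} on $I\times(\O\cup\tilde\O)$ together with $\nabla a_{ij}\in L^\infty$ (needed to put $P$ in divergence form). The main obstacle is genuinely the domain-perturbation estimate itself — comparing $u_P$ on two different domains requires transferring functions between $\O$ and $\tilde\O$ and controlling the error near the boundary — but this is precisely the content of \cite[Proposition~9.5]{Huska}, which we invoke as a black box; given it, the rest is a bookkeeping argument parallel to the proof of Corollary \ref{cor:continuity}.
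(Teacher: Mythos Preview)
Your proposal is correct and follows essentially the same approach as the paper: the paper derives Corollary~\ref{coro:regO} by invoking \cite[Proposition~9.5]{Huska} to obtain the estimate~\eqref{uPutP} for nearby domains, and then appeals to the Floquet characterizations of Theorems~\ref{thm:caracmu}, \ref{thm:caraceta}, \ref{thm:caraclambda}, exactly as you outline. Your write-up is in fact more detailed than the paper's, which merely states that the result is derived from \cite[Proposition~9.5]{Huska} in the same way as Corollary~\ref{cor:continuity} follows from \cite[Proposition~8.3]{Huska}.
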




\section{Computation of the principal eigenvalues
for some classes of operators}\label{sec:cases}
The properties we have established for the \gpe s allow us to compute them for several important classes of operators. This is the purpose of this section.

\subsubsection*{Time-independent coefficients}

When the coefficients do not depend on $t$, we recover the classical notion of Dirichlet principal eigenvalue. 

\begin{proposition}\label{pro:elliptic}
Assume that the coefficients of $P$ do not depend on $t$, i.e., $P=\partial_t-\L$ with
$\L$ given by \eqref{L}. Then, there holds
$$\mu_{b,p}(\R)=\mu_{b,p}(\R^\pm)=
\lambda_{b,p}(\R)= \lambda_{b,p}(\R^\pm)=
\mu_{p}(\R^+)=\lambda_{b}(\R^+)=
\mu_{b}(\R^-)=\lambda_{p}(\R^-)= \lambda_{D}(-\L,\O),$$
where $\lambda_{D}(-\L,\O)$ is the Dirichlet principal eigenvalue of the elliptic operator $-\L$ in $\O$. 
\end{proposition}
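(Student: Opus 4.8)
The statement is that when $P = \partial_t - \mathcal{L}$ has time-independent coefficients, all the generalized principal eigenvalues on $\mathbb{R}$, $\mathbb{R}^\pm$ collapse to the classical Dirichlet principal eigenvalue $\lambda_D(-\mathcal{L},\O)$. My plan is to exploit the characterizations in terms of the principal Floquet bundle $u_P$ (Theorems~\ref{thm:caracmu}, \ref{thm:caraceta}, \ref{thm:caraclambda}), reducing everything to understanding $u_P$ in the autonomous case. First I would identify $u_P$ explicitly: let $\phi_D > 0$ be the Dirichlet principal eigenfunction of $-\mathcal{L}$ in $\O$ with $\|\phi_D\|_{L^\infty(\O)} = 1$, so that $-\mathcal{L}\phi_D = \lambda_D \phi_D$. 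Then $u(t,x) := \phi_D(x)\, e^{-\lambda_D t}$ solves $Pu = 0$ in $\mathbb{R}\times\O$, vanishes on $\mathbb{R}\times\partial\O$, is positive and entire, and satisfies the normalization $\|u(0,\cdot)\|_{L^\infty(\O)} = 1$. By the uniqueness part of Theorem~\ref{thm:uP}, $u_P(t,x) = \phi_D(x)\, e^{-\lambda_D t}$. (In the cases $I = \mathbb{R}^\pm$ we extend $P$ by even reflection in $t$, which is again autonomous — the same $\mathcal{L}$ — so the same computation applies.)

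With this in hand, $\|u_P(t,\cdot)\|_{L^\infty(\O)} = e^{-\lambda_D t}$, hence
$$\frac{\ln\|u_P(s+t,\cdot)\|_{L^\infty(\O)} - \ln\|u_P(s,\cdot)\|_{L^\infty(\O)}}{t} = -\lambda_D$$
for every $s$ and every $t > 0$, so both the $\inf_{s}$ and $\sup_{s}$ over admissible $s$ equal $-\lambda_D$, and the limits as $t\to+\infty$ are $-\lambda_D$. Plugging into Theorem~\ref{thm:caracmu} gives $\mu_{b,p}(I) = \lambda_{b,p}(I) = \lambda_D$ for $I = \mathbb{R}, \mathbb{R}^+, \mathbb{R}^-$. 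Similarly $\ln\|u_P(t,\cdot)\|_{L^\infty(\O)} = -\lambda_D t$, so $\liminf_{t\to+\infty}\frac{\ln\|u_P(t,\cdot)\|_{L^\infty(\O)}}{t} = \limsup_{t\to+\infty}(\cdots) = -\lambda_D$; Theorem~\ref{thm:caraceta} then yields $\mu_p(\mathbb{R}^+) = \lambda_b(\mathbb{R}^+) = \lambda_D$, and the analogous limits as $t\to-\infty$ give $-\lambda_D \cdot (-1)/\ldots$; more precisely $\frac{\ln\|u_P(t,\cdot)\|_{L^\infty(\O)}}{t} = -\lambda_D$ again, so Theorem~\ref{thm:caraclambda} gives $\mu_b(\mathbb{R}^-) = \lambda_p(\mathbb{R}^-) = \lambda_D$. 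This exhausts all the quantities listed in the statement.

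There is essentially no hard obstacle here; the only point requiring a little care is the identification $u_P(t,x) = \phi_D(x)e^{-\lambda_D t}$, which rests on the uniqueness clause of Theorem~\ref{thm:uP}. One should check that $\phi_D(x)e^{-\lambda_D t}$ genuinely lies in the regularity class for which that uniqueness holds — this is immediate from elliptic regularity for $\phi_D$ on the (here assumed smooth, or at least Lipschitz) domain $\O$, combined with the smooth exponential factor in $t$. A secondary, even more routine, remark is that the even-reflection extension used to define $u_P$ when $I \neq \mathbb{R}$ produces exactly the same autonomous operator, so no compatibility issue arises at $t = 0$; alternatively one can invoke Remark~\ref{rk:ball}$(\ref{rk:extension})$ to assert independence of the extension. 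I would present the argument essentially in the two steps above: (1) compute $u_P$ in the autonomous case; (2) substitute into the three Floquet-bundle characterization theorems and read off all eight equalities.
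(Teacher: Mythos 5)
Your proof is correct and follows essentially the same route as the paper: the paper notes that this proposition is a special case of Proposition~\ref{prop:indet} (take $\sigma\equiv 0$), whose proof likewise identifies $u_P(t,x)=\phi_D(x)e^{-\lambda_D t}$ by the uniqueness in Theorem~\ref{thm:uP} and then reads off all the equalities from Theorems~\ref{thm:caracmu}, \ref{thm:caraceta}, \ref{thm:caraclambda}. The only blemish is the garbled fragment ``$-\lambda_D\cdot(-1)/\ldots$'' in the middle, but you immediately correct it, so the argument as a whole is sound.
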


This result is a particular case of Proposition \ref{prop:indet} below.


\subsubsection*{Time-independent principal part}

When the operator $P$ is the sum of an elliptic operator
plus a time-dependent zero order term $\sigma$, 
then the generalized principal eigenvalues 
can also be determined through various notions of time average of $\sigma$,
as the next result asserts.

\begin{proposition}\label{prop:indet}
Assume that $P=\partial_t-\L-\sigma(t)$, with
$\L$ given by \eqref{L} and $\sigma\in L^\infty(\R)$. 
Then, for $I= \R,\;\R^+$ or $\R^-$, there holds
$$\mu_{b,p}(I)=\lambda_D(-\L,\O) -\lim_{t\to +\infty}
\,\inf_{s,s+t\in I}\frac{\int_{s}^{s+t}\sigma(s')ds'}{t}\,,$$
$$\lambda_{b,p}(I)= \lambda_D(-\L,\O)-\lim_{t\to +\infty}
\,\sup_{s,s+t\in I}\frac{\int_{s}^{s+t}\sigma(s')ds'}{t}\,,$$
and
$$\mu_{p}(\R^+) = \lambda_D(-\L,\O)-
\liminf_{t\to +\infty}\frac{\int_{0}^{t}\sigma(s)ds}{t},\qquad 
\lambda_{b}(\R^+) = \lambda_D(-\L,\O)-\limsup_{t\to +\infty}\frac{\int_{0}^{t}\sigma(s)ds}{t}\,,$$
$$\mu_{b}(\R^-) = \lambda_D(-\L,\O)-
\liminf_{t\to -\infty}\frac{\int_{0}^{t}\sigma(s)ds}{t}, 
\qquad\lambda_{p}( \R^-) = \lambda_D(-\L,\O)-\limsup_{t\to -\infty}\frac{\int_{0}^{t}\sigma(s)ds}{t}\,,$$
where $\lambda_{D}(-\L,\O)$ is the Dirichlet principal eigenvalue of the elliptic operator $-\L$ in $\O$. 
\end{proposition}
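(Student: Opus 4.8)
\textbf{Proof plan for Proposition~\ref{prop:indet}.}

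The key observation is that when $P=\partial_t-\L-\sigma(t)$, the entire solution $u_P$ from Theorem~\ref{thm:uP} can be written down explicitly. Indeed, let $\phi_D>0$ be the Dirichlet principal eigenfunction of $-\L$ in $\O$, so that $-\L\phi_D=\lambda_D(-\L,\O)\phi_D$ in $\O$ and $\phi_D=0$ on $\partial\O$, normalized by $\|\phi_D\|_{L^\infty(\O)}=1$. Then the function
\[
v(t,x):=\phi_D(x)\,\exp\!\Big(-\lambda_D(-\L,\O)\,t+\int_0^t\sigma(s)\,ds\Big)
\]
solves $Pv=0$ in $\R\times\O$, vanishes on $\R\times\partial\O$, is positive and entire, and satisfies $\|v(0,\cdot)\|_{L^\infty(\O)}=1$. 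By the uniqueness part of Theorem~\ref{thm:uP}, $u_P=v$. (When $I\neq\R$ we extend $\sigma$ to $\R$; by Remark~\ref{rk:ball}$(\ref{rk:extension})$ the right-hand sides in Theorems~\ref{thm:caracmu}, \ref{thm:caraceta}, \ref{thm:caraclambda} do not depend on the extension, so the formulas below are unambiguous.) Consequently, for any $s$ and $t$,
\[
\frac{\ln\|u_P(s+t,\cdot)\|_{L^\infty(\O)}-\ln\|u_P(s,\cdot)\|_{L^\infty(\O)}}{t}
=-\lambda_D(-\L,\O)+\frac{1}{t}\int_s^{s+t}\sigma(s')\,ds'.
\]

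Given this identity, every formula in the statement is obtained by substituting into the corresponding characterization already proved. For the first two identities I would plug the display above into Theorem~\ref{thm:caracmu}: taking $\inf_{s,s+t\in I}$ turns the term $-\lambda_D(-\L,\O)$ into itself (it is constant in $s$) and leaves $\inf_{s,s+t\in I}\frac1t\int_s^{s+t}\sigma$, then letting $t\to+\infty$ and negating gives exactly $\mu_{b,p}(I)=\lambda_D(-\L,\O)-\lim_{t\to+\infty}\inf_{s,s+t\in I}\frac1t\int_s^{s+t}\sigma(s')ds'$; the existence of this limit is part of the content of Theorem~\ref{thm:caracmu} (or can be invoked from the global growth-rate machinery of Section~\ref{sec:tool}). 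The formula for $\lambda_{b,p}(I)$ is identical with $\sup$ in place of $\inf$. For the four remaining identities I would substitute the same explicit expression (with $s=0$) into Theorems~\ref{thm:caraceta} and~\ref{thm:caraclambda}: e.g.\ $\mu_p(\R^+)=-\liminf_{t\to+\infty}\frac{\ln\|u_P(t,\cdot)\|_{L^\infty(\O)}}{t}=-\liminf_{t\to+\infty}\big(-\lambda_D(-\L,\O)+\frac1t\int_0^t\sigma\big)=\lambda_D(-\L,\O)-\liminf_{t\to+\infty}\frac1t\int_0^t\sigma(s)ds$, and similarly for $\lambda_b(\R^+)$, $\mu_b(\R^-)$ (using $\liminf_{t\to-\infty}$, noting the sign bookkeeping in $\ln\|u_P(t,\cdot)\|/t$ for $t<0$), and $\lambda_p(\R^-)$.

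There is essentially no serious obstacle here: once the explicit form of $u_P$ is established, the proposition is a direct consequence of the Floquet-bundle characterizations. The only point requiring a little care is the verification that the explicit $v$ is indeed \emph{the} solution given by Theorem~\ref{thm:uP}, i.e.\ that it is the unique time-global nonnegative solution vanishing on the lateral boundary. Positivity, the PDE, the boundary condition and the normalization are immediate; uniqueness is exactly the statement of Theorem~\ref{thm:uP}. A secondary bookkeeping point is the correct handling of signs and of the direction of the limits in the $\R^-$ case (where $t\to-\infty$ and $\ln\|u_P(t,\cdot)\|/t$ reverses sign), but this is routine. Since Proposition~\ref{pro:elliptic} is the special case $\sigma\equiv0$ — all the time averages of $0$ vanish and every eigenvalue collapses to $\lambda_D(-\L,\O)$ — it follows immediately, as claimed.
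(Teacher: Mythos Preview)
Your proposal is correct and follows essentially the same approach as the paper: identify $u_P$ explicitly as $\phi_D(x)\exp\big(-\lambda_D(-\L,\O)t+\int_0^t\sigma\big)$ via the uniqueness in Theorem~\ref{thm:uP}, then read off all the formulas by substituting into Theorems~\ref{thm:caracmu}, \ref{thm:caraceta}, \ref{thm:caraclambda}. The paper's proof is just a terser version of yours.
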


\begin{proof}
It is straightforward to check that the function $u_P$ provided by Theorem \ref{thm:uP}
is given~by 
\Fi{uP-Dirichlet+sigma}
u_{P}(t,x)= \phi_{D}(x) e^{-\lambda_{D}(-\L,\O) t+\int_{0}^{t}\sigma(s)ds},
\Ff
where $\phi_{D}$ is the Dirichlet principal eigenfunction of $-\L$ in $\O$,
suitably normalized.
The equalities then follow from Theorems \ref{thm:caracmu}, \ref{thm:caraceta}, \ref{thm:caraclambda}. 
\end{proof}

\begin{remark}\label{rmk:differ}
    It follows from Proposition \ref{prop:indet} that all the notions 
    $\mu_{b,p}$, $\lambda_{b,p}$, $\mu_{p}$, $\lambda_b$, $\mu_{b}$, $\lambda_p$ 
    differ in general. It is indeed sufficient to choose $\sigma$ so that
    the corresponding notions of average are all different.
    As a consequence of this fact, 
    one infers that in general there does not exist a {\em 
    principal eigenfunction} associated with the 
    \gpe s $\mu_{b,p}$, $\lambda_{b,p}$, that is,
    a positive eigenfunction $\phi$ satisfying the requirements
    $\sup_{I\times\O}\phi<+\infty$ and $\inf_{I\times B}\phi>0$.
    Indeed, if it existed, then one would have $\mu_{b,p}\leq\lambda_{b,p}$, hence 
    by Proposition \ref{pro:basic} the eigenvalues $\mu_{b,p}$ and $\lambda_{b,p}$
    would coincide. 
    Likewise, in general there are no principal eigenfunctions
    associated with the other notions of \gpe s. For if they did exist, one would get $\mu_b(\R)\leq\lambda_b(\R)$
    and $\mu_p(\R)\leq \lambda_p(\R)$ but by Proposition~\ref{prop:indet}
    one can choose $\sigma$ in such a way that these inequalities fail.
\end{remark}



\subsubsection*{Coefficients converging when $t\to \pm \infty$}

For the next result, we need the Lipschitz regularity of the $a_{ij}$, that
allows one to write the operator in self-adjoint form.

\begin{proposition}\label{prop:infty}
Assume that $\nabla a_{ij} \in L^\infty(I\times \O)$, and that
$a_{ij}(t,x)\to a_{ij}^{\infty}(x)$, $\nabla a_{ij}(t,x)\to \nabla a_{ij}^{\infty}(x)$, 
$b_i(t,x)\to b_i^\infty(x)$, $c(t,x)\to c^\infty(x)$ 
as $t\to +\infty$ (resp.~$t\to-\infty$),
 uniformly in $x\in\O$.
Then
$$\mu_{b,p} (\R^+)=\lambda_{b,p} (\R^+)=
\mu_p (\R^+)=\lambda_b (\R^+) =  \lambda_{D}(-\L^{\infty},\O)
$$
$$
\text{(resp. }
\mu_{b,p} (\R^-)=\lambda_{b,p} (\R^-)=
\mu_b(\R^-)=\lambda_p(\R^-) =  \lambda_{D}(-\L^{\infty},\O)\text{ )},
$$
where $\lambda_{D}(-\L^{\infty},\O)$ is the Dirichlet principal eigenvalue of the elliptic operator 
$$-\L^{\infty}=-a_{ij}^{\infty}(x)\partial_{ij}-b_i^{\infty}(x)\partial_i-c^{\infty}(x)$$
in the domain $\O$. 
%
\end{proposition}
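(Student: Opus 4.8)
\textit{Proof strategy.} The plan is to reduce the statement to the time-independent case handled in Proposition~\ref{pro:elliptic}, by invoking the perturbation Corollary~\ref{cor:continuity}. The key observation is that, far out in time, the coefficients of $P$ are uniformly close --- including their $x$-gradients, for the $a_{ij}$ --- to those of the autonomous operator $\tilde P:=\partial_t-\L^\infty$, so the generalized principal eigenvalues of $P$ over a sufficiently far half-line are within $\e$ of those of $\tilde P$, which all equal $\lambda_D(-\L^\infty,\O)$.

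First I would record that $\tilde P$ satisfies the standing hypotheses \eqref{hyp:reg} together with $\nabla a_{ij}^\infty\in L^\infty$: by assumption $a_{ij}^\infty,b_i^\infty,c^\infty$ are uniform (in $x$) limits, as $t\to+\infty$ (resp.\ $t\to-\infty$), of the bounded functions $a_{ij}(t,\cdot),b_i(t,\cdot),c(t,\cdot)$, hence bounded; $a_{ij}^\infty$ is continuous as a uniform limit of continuous functions; the ellipticity constant $\alpha$ passes to the limit; and $\nabla a_{ij}^\infty$ is the uniform limit of $\nabla a_{ij}(t,\cdot)$, hence bounded. Consequently, by Proposition~\ref{pro:elliptic}, each of the generalized principal eigenvalues of $\tilde P$ --- over $\R$, $\R^+$, or $\R^-$ --- equals $\lambda_D(-\L^\infty,\O)$.

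Then, for the ``$t\to+\infty$'' case (the other being symmetric), I would fix $\e>0$ and let $\delta>0$ be the threshold given by Corollary~\ref{cor:continuity} with $I=\R^+$, for a generic operator compared with $\tilde P$. By the assumed uniform convergence of $a_{ij},\nabla a_{ij},b_i,c$ as $t\to+\infty$, there is $T>0$ so that the time-translated operator $P_T:=\partial_t-a_{ij}(t+T,x)\partial_{ij}-b_i(t+T,x)\partial_i-c(t+T,x)$ has coefficients within $\delta$ of those of $\tilde P$, in the norms appearing in Corollary~\ref{cor:continuity}, on $\R^+\times\O$. Hence Corollary~\ref{cor:continuity} yields that $\mu_{b,p}(P_T,\R^+)$, $\lambda_{b,p}(P_T,\R^+)$, $\mu_p(P_T,\R^+)$ and $\lambda_b(P_T,\R^+)$ all lie within $\e$ of $\lambda_D(-\L^\infty,\O)$. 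A change of variables $\psi(t,x):=\phi(t-T,x)$ in the test functions shows that each such eigenvalue of $P_T$ over $\R^+$ coincides with the corresponding one of $P$ over $(T,+\infty)$, which in turn equals the one of $P$ over $\R^+$ by Remark~\ref{rk:ball}$(\ref{rk:IT})$. Letting $\e\to0$ gives $\mu_{b,p}(\R^+)=\lambda_{b,p}(\R^+)=\mu_p(\R^+)=\lambda_b(\R^+)=\lambda_D(-\L^\infty,\O)$. The case $t\to-\infty$ is identical, now translating half-lines $(-\infty,T)$ onto $\R^-$ as $T\to-\infty$, using the convergence as $t\to-\infty$, the eigenvalues $\mu_{b,p},\lambda_{b,p},\mu_b,\lambda_p$ (those controlled by Corollary~\ref{cor:continuity} when $I=\R^-$), and the corresponding equalities in Proposition~\ref{pro:elliptic}.

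With Corollary~\ref{cor:continuity} available this is essentially bookkeeping; the only delicate points are checking that $\tilde P$ inherits the structural hypotheses needed to invoke the perturbation corollary, and the translation invariance used to replace the far half-line $(T,+\infty)$ by $\R^+$ (exactly Remark~\ref{rk:ball}$(\ref{rk:IT})$). Were one instead to argue directly, the real obstacle would be to build admissible test functions from the limiting Dirichlet eigenfunction $\phi^\infty$: the naive ansatz $\eta(t)\phi^\infty(x)$ produces an error term behaving like $(a_{ij}-a_{ij}^\infty)\,\partial_{ij}\phi^\infty/\phi^\infty$, which goes to $0$ in $t$ but blows up near $\partial\O$, forcing one to work on slightly enlarged or shrunk domains and to control the Dirichlet principal eigenvalue of $\L^\infty$ under such domain perturbations --- precisely the information that Corollaries~\ref{cor:continuity} and~\ref{coro:regO} already package.
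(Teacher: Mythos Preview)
Your proposal is correct and follows essentially the same route as the paper: use Remark~\ref{rk:ball}$(\ref{rk:IT})$ to replace $\R^+$ by a far half-line $(T,+\infty)$, invoke Corollary~\ref{cor:continuity} to compare with the autonomous operator $\tilde P=\partial_t-\L^\infty$, and conclude via Proposition~\ref{pro:elliptic}. You are slightly more explicit than the paper in translating $(T,+\infty)$ onto $\R^+$ before invoking Corollary~\ref{cor:continuity} (which is stated only for $I=\R,\R^\pm$), whereas the paper applies it directly with ``$I=(a,+\infty)$'' and leaves the translation implicit; your version is the cleaner bookkeeping.
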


\begin{proof} 

As noted in 
Remark~\ref{rk:ball}$(\ref{rk:IT})$, one has $\mu_{b.p}(\R^+)=\mu_{b,p}((a,+\infty))$ for 
all $a>0$. 
Thus, applying Corollary \ref{cor:continuity} with $I=(a,+\infty)$, one deduces
$$\mu_{b,p}(\R^+)=\mu_{b,p}((a,+\infty))\xrightarrow{a\to+\infty}
\mu_{b,p}(\partial_t-\L^{\infty},\R^+),$$
which coincides with $\lambda_{D}(-\L^{\infty},\O)$ thanks to 
Proposition \ref{pro:elliptic}.
The other equivalences follow from the same arguments.
\end{proof}


\subsubsection*{Periodic coefficients}

We assume in this section that the coefficients are periodic in $t$,
with the same period $T>0$, namely
$$\forall t\in \R,\ x\in \O,\ i,j=1,...,N,\quad
a_{ij}(t+T,x)=a_{ij}(t,x),\quad
b_{i}(t+T,x)=b_i(t,x),
\quad c(t+T,x)=c(t,x).$$
Under this hypothesis, it turns out that
all the notions of principal eigenvalues are equivalent, coinciding with the
standard periodic principal eigenvalue.

\begin{proposition}\label{pro:periodic}
 Assume that the coefficients of $P$ are  periodic in $t$, with period $T$. 
 Then there holds
 $$\lambda_{b,p}(\R)= \lambda_{b}(\R)=\lambda_{p}(\R)=\mu_{b,p}(\R)=\mu_{p}(\R)=\mu_{b}(\R)= \lambda_{per}(\O),$$
 where $\lambda_{per}(\O)$ is the unique $\lambda\in \R$ such that there exists a solution $\phi$ 
 of the problem
 \[
 \left\{\begin{array}{ll}
 P\phi = \lambda\phi \ &\hbox{in } \R\times \O, \\
 \phi \;\text{ is periodic in $t$}  &\hbox{with period $T$},\\
 \phi (t,x)=0\  &\hbox{for } (t,x)\in \R \times \partial\O,\\
  \phi (t,x)>0 \  &\hbox{for } (t,x)\in \R \times \O.\\
 \end{array}\right.
 \]
 Moreover, the principal eigenvalue admits the following characterization:
 \[
\begin{split}
\lambda_{per}(\O) &=\sup\{\lambda\ :\ \ex\phi>0 \text{ in }\R\times\O,\ 
	\phi \hbox{ is $t$-periodic with period $T$}, \ P\phi\geq\lambda\phi\text{ in }\R\times\O\}
 \\[5pt]
 &=\inf\{\lambda\ :\ \ex\phi>0 \text{ in }\R\times\O,
 \ \phi=0\text{ on }\R\times\partial\O,\ \phi \hbox{ is $t$-periodic with period $T$},\\
 &\qquad  \hspace{284pt}
	P\phi\leq\lambda\phi\text{ in }\R\times\O\}.\hfill\\
\end{split}
\]
\end{proposition}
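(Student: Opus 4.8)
The plan is to reduce Proposition~\ref{pro:periodic} to two ingredients: the Floquet characterization of Theorem~\ref{thm:caracmu} applied to a periodic operator, and a direct verification that the periodic principal eigenfunction $\phi_{per}$ (whose existence and simplicity come from Krein--Rutman theory, as recalled in the introduction) is an admissible test function in all six variational definitions. First I would observe that, when the coefficients are $T$-periodic, the normalized entire solution $u_P$ of \eqref{Pu=0} provided by Theorem~\ref{thm:uP} is necessarily given by $u_P(t,x)=e^{-\lambda_{per}(\O)\,t}\,\phi_{per}(t,x)$ with $\phi_{per}$ the $T$-periodic positive eigenfunction normalized so that $\|\phi_{per}(0,\cdot)\|_{L^\infty(\O)}=1$; indeed this function solves $Pu_P=0$, vanishes on the lateral boundary, is positive and entire, so it must coincide with $u_P$ by the uniqueness in Theorem~\ref{thm:uP}. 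Since $\phi_{per}$ is continuous and $T$-periodic in $t$, both $\inf_{\R\times\O}\phi_{per}/\,$ (well, $\inf_{\R\times B}\phi_{per}$) and $\sup_{\R\times\O}\phi_{per}$ reduce to infima/suprema over the compact set $[0,T]\times\overline\O$, hence $\inf_{\R\times B}\phi_{per}>0$ and $\sup_{\R\times\O}\phi_{per}<+\infty$.

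Next I would plug $u_P$ into the formulas of Theorem~\ref{thm:caracmu}, \ref{thm:caraceta}, \ref{thm:caraclambda}. By periodicity, $\|u_P(s+t,\cdot)\|_{L^\infty(\O)}=e^{-\lambda_{per}(\O)t}\|\phi_{per}(s+t,\cdot)\|_{L^\infty(\O)}$ and the ratio $\|\phi_{per}(s+t,\cdot)\|_{L^\infty}/\|\phi_{per}(s,\cdot)\|_{L^\infty}$ is bounded above and below by positive constants uniformly in $s,t$; therefore
\[
\frac{\ln\|u_P(s+t,\cdot)\|_{L^\infty(\O)}-\ln\|u_P(s,\cdot)\|_{L^\infty(\O)}}{t}
= -\lambda_{per}(\O)+O(1/t)
\]
uniformly in $s$, so both the $\inf_s$ and the $\sup_s$ versions converge to $-\lambda_{per}(\O)$ as $t\to+\infty$. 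This gives at once $\mu_{b,p}(\R)=\lambda_{b,p}(\R)=\lambda_{per}(\O)$, and similarly $\mu_p(\R)=\mu_b(\R)=\lambda_p(\R)=\lambda_b(\R)=\lambda_{per}(\O)$ via Theorems~\ref{thm:caraceta} and~\ref{thm:caraclambda} together with Corollary~\ref{cor:caracmu} (using $\mu_{b,p}(\R)=\max\{\mu_{b,p}(\R^-),\mu_{b,p}(\R^+)\}$ etc.), since all the $\limsup$'s and $\liminf$'s of $\frac1t\ln\|u_P(t,\cdot)\|_{L^\infty}$ equal $-\lambda_{per}(\O)$. This also squares with Proposition~\ref{pro:basic}, whose chain of inequalities collapses once the extreme terms coincide.

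For the two variational characterizations of $\lambda_{per}(\O)$ in the statement, I would argue that $\lambda_{per}(\O)$ is $\leq$ the claimed $\sup$ and $\geq$ the claimed $\inf$ trivially, since $\phi=\phi_{per}$ is a competitor in both (it is $T$-periodic, positive in $\R\times\O$, vanishes on $\R\times\partial\O$, and satisfies $P\phi=\lambda_{per}(\O)\phi$). For the reverse inequalities: if $\phi>0$ is $T$-periodic with $P\phi\geq\lambda\phi$, then $\phi$ is bounded and bounded below on $B$ by the same compactness argument, so it is an admissible test function for $\lambda_{b,p}(\R)$, giving $\lambda\leq\lambda_{b,p}(\R)=\lambda_{per}(\O)$; symmetrically, a $T$-periodic $\phi>0$ vanishing on the boundary with $P\phi\leq\lambda\phi$ is admissible for $\mu_{b,p}(\R)$, so $\lambda\geq\mu_{b,p}(\R)=\lambda_{per}(\O)$. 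Taking sup and inf yields the two identities. Alternatively one can bypass $\lambda_{b,p},\mu_{b,p}$ and compare directly with $\lambda_{per}(\O)$ by a sliding/eigenfunction argument on the compact cylinder $[0,T]\times\overline\O$, testing $\phi$ against $\phi_{per}$ and the adjoint periodic eigenfunction.

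I expect the only genuine subtlety — hence the "main obstacle" — to be the uniform (in $s$) control of the ratio $\|\phi_{per}(s+t,\cdot)\|_{L^\infty}/\|\phi_{per}(s,\cdot)\|_{L^\infty}$ that is needed to exchange the $\inf_s/\sup_s$ with the limit in $t$ in Theorem~\ref{thm:caracmu}; but since $\phi_{per}$ is continuous, $T$-periodic, and bounded away from $0$ on $\overline\O$ (by the Hopf lemma / interior positivity and periodicity, $\min_{[0,T]\times\overline{B}}\phi_{per}>0$), this ratio lies in a fixed compact subset of $(0,\infty)$, so its logarithm divided by $t$ is $O(1/t)$ uniformly. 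Everything else is a matter of assembling the already-established theorems. A minor point to double-check is that $\phi_{per}\in W^{1,2}_{p,\mathrm{loc}}$ for $p>N+1$, which follows from interior parabolic $L^p$ estimates applied to the periodic eigenfunction equation, so that $\phi_{per}$ is a legitimate test function in the strong-solution sense used throughout.
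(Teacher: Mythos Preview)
Your proposal is correct and follows essentially the same route as the paper's proof: identify $u_P(t,x)=e^{-\lambda_{per}(\O)t}\phi_{per}(t,x)$ by the uniqueness in Theorem~\ref{thm:uP}, apply Theorems~\ref{thm:caracmu}, \ref{thm:caraceta}, \ref{thm:caraclambda} to obtain all six eigenvalues equal to $\lambda_{per}(\O)$, and then derive the two variational characterizations by noting that any $T$-periodic test function is automatically admissible for $\mu_{b,p}$ or $\lambda_{b,p}$ via compactness. The paper's argument is slightly terser (it packages the last step as the single chain $\lambda_{per}=\mu_{b,p}\leq\inf\{\cdots\}\leq\lambda_{per}\leq\sup\{\cdots\}\leq\lambda_{b,p}=\lambda_{per}$), and the invocation of Corollary~\ref{cor:caracmu} in your write-up is unnecessary since Theorems~\ref{thm:caraceta}--\ref{thm:caraclambda} already give the $I=\R$ case directly, but these are cosmetic differences.
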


\begin{proof}  The existence and uniqueness
of the solution $\lambda_{per}(\O)$
of the above eigenproblem (together with its simplicity)
follows from the standard Krein-Rutman theory, since the problem is set on
a compact domain, due to periodicity. Let $\phi$ be the 
eigenfunction associated with $\lambda_{per}(\O)$. 
    It is immediately checked that $u_{P}(t,x)= \phi(t,x) e^{-\lambda_{per}(\O)t}$.
    The equalities 
    stated in the first part of the proposition are then a consequence of 
    Theorems \ref{thm:caracmu}, \ref{thm:caraceta}, \ref{thm:caraclambda}. 
The last equalities then follow too, because
    \[\begin{split}
\lambda_{per}(\O)&=\mu_{b,p}(\R)\\ &\leq 
\inf\{\lambda\ :\ \ex\phi>0 \text{ in }\R\times\O,
 \ \phi=0\text{ on }\R\times\partial\O,\ \phi \hbox{ is $t$-periodic with period $T$},\\
&\qquad  \hspace{284pt}
	P\phi\leq\lambda\phi\text{ in }\R\times\O\}\\
 &\leq \lambda_{per}(\O)\\
 &\leq\sup\{\lambda\ :\ \ex\phi>0 \text{ in }\R\times\O,\ 
	\phi \hbox{ is $t$-periodic with period $T$}, \ P\phi\geq\lambda\phi\text{ in }\R\times\O\}
 \\ &\leq \lambda_{b,p}(\R)=\lambda_{per}(\O).
\end{split}
\]
\end{proof}


\subsubsection*{Uniquely ergodic coefficients}

We now address the case of {\em uniquely ergodic} coefficients with respect to $t$.

\begin{definition} A function $f: I\times \O \to \R^{m}$ is called {\em uniquely ergodic} with respect to $t\in I$ if it is  uniformly continuous and bounded over $I\times \O$, and if there exists a unique invariant probability measure on its hull $\mathcal{H}_{f}:=cl\{ \tau_{a}f , a\in I\}$, where 
$cl$ is the closure with respect to the locally uniform convergence in $I\times\ol\O$, 
and where the invariance is understood with respect to the translations $\tau_{a} f (t,x):= f(t+a,x)$. 
\end{definition}

This notion is well-known to generalize that of {\em almost periodic} functions. Indeed, if $f: I\times \O \to \R^{m}$ is almost periodic with respect to $t$, let $\Psi:\mathcal{H}_{f}\to \R$. Then we could define $\int_{\mathcal{H}_{f}}\Psi d \mathbb{P}:= \lim_{T\to \infty}\frac{1}{2T}\int_{a-T}^{a+T} \Psi (\tau_t f)dt$ uniformly with respect to $a\in \R$. Moreover, one can check that $\mathbb{P}$ is the unique invariant probability measure on $\mathcal{H}_{f}:=cl\{ \tau_{a}f , a\in I\}$.

When the coefficients of $P$ are uniquely ergodic in $t$, we are able to prove that all the notions of generalized principal eigenvalues coincide.

\begin{proposition}\label{pro:ap}
Assume that the coefficients $(a_{ij})_{i,j\in \{1,\dots,N\}}$, $(b_{i})_{i\in \{1,\dots,N\}}$ and $c$ are uniquely ergodic with respect to $t\in I$ locally uniformly in $x\in \O$. Then there holds
$$\mu_{b,p} (\R)=\lambda_{b,p} (\R)=\mu_p (\R)=\lambda_p (\R) = \mu_b(\R)=\lambda_p(\R). $$
\end{proposition}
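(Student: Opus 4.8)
The plan is to reduce everything to Theorems \ref{thm:caracmu}, \ref{thm:caraceta} and \ref{thm:caraclambda}, which express all six generalized principal eigenvalues in terms of the growth ratios of $u_P$, and then to show that unique ergodicity forces these six limits to coincide. Concretely, writing $\rho(s,t):=\ln\|u_P(s+t,\cdot)\|_{L^\infty(\O)}-\ln\|u_P(s,\cdot)\|_{L^\infty(\O)}$, Theorem \ref{thm:caracmu} gives $-\mu_{b,p}(\R)=\lim_{t\to+\infty}\inf_s \rho(s,t)/t$ and $-\lambda_{b,p}(\R)=\lim_{t\to+\infty}\sup_s \rho(s,t)/t$, while Theorems \ref{thm:caraceta}--\ref{thm:caraclambda} give the four one-sided quantities as $\liminf$/$\limsup$ of $\rho(0,t)/t$ as $t\to\pm\infty$. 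Since $\lambda_{b,p}\le\min\{\lambda_b,\lambda_p\}\le\max\{\mu_b,\mu_p\}\le\mu_{b,p}$ by Proposition \ref{pro:basic}, it suffices to prove the single equality $\mu_{b,p}(\R)=\lambda_{b,p}(\R)$, i.e.\ that $\rho(s,t)/t$ converges as $t\to+\infty$ \emph{uniformly in $s\in\R$}; all other equalities then follow by squeezing.

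The main step is therefore a subadditive/uniform-ergodic argument for the cocycle $\rho$. First I would record the cocycle identity $\rho(s,t_1+t_2)=\rho(s,t_1)+\rho(s+t_1,t_2)$ together with the uniform bound $|\rho(s,t)|\le Ct$ for $t\ge 1$, which comes from Harnack/parabolic estimates (the coefficients being bounded), and the fact (from \cite{Huska}, or from the continuity properties recalled after Theorem \ref{thm:uP}) that the quantity $\rho(s,t)$ depends on $s$ only through the translated operator $\tau_s P$, in a way that is continuous for locally uniform convergence of coefficients. Thus $\rho(s,t)=g_t(\tau_s P)$ for a continuous function $g_t$ on the hull $\mathcal H_P$, and $g_{t_1+t_2}(\omega)=g_{t_1}(\omega)+g_{t_2}(\Theta_{t_1}\omega)$, where $\Theta$ is the translation flow on $\mathcal H_P$. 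Now unique ergodicity of $\mathcal H_P$ (which follows from unique ergodicity of the coefficients, since the hull of $P$ is a factor of the product of the hulls of $a_{ij},b_i,c$) combined with the classical uniquely ergodic subadditive ergodic theorem — in the form due to Furman, or the continuous-time version used in almost periodic homogenization — yields that $g_t/t$ converges \emph{uniformly on $\mathcal H_P$} to the constant $\int_{\mathcal H_P} g_1\,d\mathbb P =: -\ell$. Translating back, $\rho(s,t)/t\to -\ell$ uniformly in $s$, hence $\mu_{b,p}(\R)=\lambda_{b,p}(\R)=\ell$.

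Finally I would close the loop: by Theorem \ref{thm:caraceta}, $\mu_p(\R)=\mu_p(\R^+)=-\liminf_{t\to+\infty}\rho(0,t)/t=\ell$ and $\lambda_b(\R)=-\limsup_{t\to+\infty}\rho(0,t)/t=\ell$; by Theorem \ref{thm:caraclambda}, $\mu_b(\R)=\mu_b(\R^-)=-\liminf_{t\to-\infty}\rho(0,t)/t$ and $\lambda_p(\R)=\lambda_p(\R^-)=-\limsup_{t\to-\infty}\rho(0,t)/t$, which also equal $\ell$ because the uniform convergence of $\rho(s,t)/t$ for $t\to+\infty$ transfers (via $\rho(s,t)=-\rho(s+t,-t)$ after even reflection, noting that the even extension of a uniquely ergodic function need not be uniquely ergodic, so here I instead use directly the two-sided flow on $\mathcal H_P$ with $I=\R$, where unique ergodicity is genuinely two-sided). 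The expected main obstacle is precisely the invocation of the uniquely ergodic subadditive ergodic theorem for the additive cocycle $g_t$: one must check the hypotheses (continuity of $g_1$ on the compact metric space $\mathcal H_P$, the cocycle relation, and the integrability, all of which hold), and one must be careful that the relevant dynamical system is the hull $\mathcal H_P$ of the \emph{operator}, with its induced unique invariant measure, rather than the product system; a short lemma identifying $\mathcal H_P$ as a topological factor of $\prod \mathcal H_{a_{ij}}\times\prod\mathcal H_{b_i}\times\mathcal H_c$ and transporting unique ergodicity through the factor map handles this.
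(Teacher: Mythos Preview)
Your overall strategy is correct and matches the paper's: reduce via Proposition~\ref{pro:basic} to the single equality $\mu_{b,p}(\R)=\lambda_{b,p}(\R)$, then squeeze. The paper, however, does not carry out the ergodic argument itself; it simply identifies $-\mu_{b,p}(\R)$ and $-\lambda_{b,p}(\R)$ with the endpoints of Mierczy\'nski's principal spectrum (via Theorem~\ref{thm:caracmu}, after noting $v=\sigma u_P$) and then invokes \cite[Theorem~2.15]{Map}, which already asserts that the principal spectrum is a single point under unique ergodicity. Your plan essentially \emph{reproves} that theorem: you set up the additive cocycle $g_t(\omega)=\ln\|u_\omega(t,\cdot)\|_{L^\infty(\O)}$ on the hull and appeal to a uniform ergodic theorem. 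Two small corrections to your sketch: (i) since $g_t$ is genuinely additive, Furman's subadditive theorem is overkill---the classical uniform Birkhoff theorem for uniquely ergodic flows already gives uniform convergence of $g_t/t$; (ii) the continuity of $\omega\mapsto g_t(\omega)$ on $\mathcal H_P$ does not follow from the $L^\infty$-perturbation results of \cite{Huska} you cite (the hull topology is only locally uniform), but rather from the compactness/uniqueness argument used in the proof of Theorem~\ref{P*} (parabolic estimates plus uniqueness of $u_{P^*}$). Finally, your worry about the $t\to-\infty$ case is unnecessary: since $I=\R$ there is no reflection, and $\beta(t)/t=\rho(t,|t|)/|t|$ for $t<0$, so the uniform limit in $s$ of $\rho(s,T)/T$ as $T\to+\infty$ settles $\mu_b(\R)$ and $\lambda_p(\R)$ directly.
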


In the work \cite{HST}, the authors investigate the principal spectrum 
when the coefficients are periodic or almost periodic in time. 
They prove that the principal spectrum is a point, from which one could derive Proposition \ref{pro:ap} when the coefficients are almost periodic in time. Moreover, some comparisons are derived in \cite{HST} between the principal spectrum and the Dirichlet eigenvalues associated with coefficients that are averaged in time.

\begin{proof} 
This follows from \cite[Proposition 2.11 and Theorem 2.15]{Map}. Indeed, in that paper, the author defines a principal spectrum $E:=[\underline{\lambda}, \overline{\lambda}]$ and characterizes it in Proposition 2.11 as 
$$\underline{\lambda}=\lim_{t\to +\infty}
\left(\inf_{s\in \R}\frac{\ln \|v (s+t,\cdot)\|_{L^{2}(\O)} - 
	\ln \| v (s,\cdot)\|_{L^2(\O)}}{t}\right)$$
    $$\overline{\lambda}=\lim_{t\to +\infty}
\left(\sup_{s\in \R}\frac{\ln \|v (s+t,\cdot)\|_{L^{2}(\O)} - 
	\ln \| v (s,\cdot)\|_{L^2(\O)}}{t}\right)$$
where $v$ is the unique positive time-global solution of $Pv=0$, with $v=0$ on $\R\times \partial\O$ and $\|v(0,\cdot)\|_{L^2 (\O)}=1$ (constructed in  \cite[Theorem 2.4]{Map}). By uniqueness, there exists $\sigma>0$ such that $v\equiv \sigma u_P$, and we thus get, by Theorem \ref{thm:caracmu} and the subsequent remark, that $\mu_{b,p}(\R)=-\underline{\lambda}$ and $\lambda_{b,p}(\R)=-\overline{\lambda}$. Lastly, \cite[Theorem 2.15]{Map} yields $\underline{\lambda}=\overline{\lambda}$ and the conclusion then follows
using also Theorems \ref{thm:caraceta}, \ref{thm:caraclambda}. 
\end{proof}

\begin{remark} The identity $\mu_{b,p} (I)=\lambda_{b,p} (I)=:\lambda$ does not imply the existence of a positive function~$\phi$ satisfying $P\phi=\lambda \phi$ on $I\times\O$, $\phi=0$ on $I\times \partial \O$, and 
$\inf_{t\in I}\phi(t,x)>0$ for $x\in \O$, $\sup\phi<+\infty$.
Indeed, for instance, in the proof of 
    \cite[Proposition 3.5]{Rossiap}, the third author of the present paper exhibits the existence of an odd, almost periodic function $\sigma:\R\to\R$ such that $\frac1t\int_0^t \sigma (s)ds\to 0$ as $t\to +\infty$,
    but $\int_0^t \sigma (s)ds\to +\infty$ as $t\to +\infty$. 
    Then, if a function $\phi$ with the above properties existed for the operator $P=\partial_t-\Delta -\sigma (t)$
    with $I=\R$ or $I=\R^+$, then Theorem~\ref{thm:Huska} would imply that $\phi(t,x)e^{-\lambda t}=
    u_P(t,x)(q+w(t,x))$,
    for some (necessarily positive) constant $q$ and a function $w$ converging exponentially to $0$ as $t\to+\infty$.
    On the other hand, as seen in the proof of Proposition~\ref{prop:indet}, in this case the function $u_P$ is given by
    \eqref{uP-Dirichlet+sigma}, whence 
    $$\phi(t,x)\sim q\phi_{D}(x) e^{(\lambda-\lambda_{D}(-\L,\O))t+\int_0^t \sigma(s)ds}\quad\text{as }\;t\to+\infty,$$
    where $\lambda_{D}(-\L,\O)$, $\phi_{D}$ are the Dirichlet principal eigenvalue and eigenfunction of $-\L$ over $\O$.
    But then the properties of $\int_0^t\sigma$ prevent $\phi$ from simultaneously fulfilling the conditions 
    $\inf_{t\in \R^+}\phi(t,x)>0$ and $\sup\phi<+\infty$.
\end{remark}




\subsubsection*{Random stationary ergodic coefficients}

Consider the operator
$$P=\partial_t-a_{ij}(t,x,\o)\partial_{ij}-b_i(t,x,\o)\partial_i-c(t,x,\o).$$
We assume that the coefficients 
$(a_{ij})_{i,j\in \{1,\dots,N\}}$, $(b_{i})_{i\in \{1,\dots,N\}}$ and $c$ are random variables, 
defined for $(t,x,\omega)\in \R\times \O\times \mathcal{O}$,
where $(\mathcal{O},\mathbb{P},\mathcal{F})$ is a probability space.
We suppose that the hypotheses \eqref{hyp:reg} 
are satisfied almost surely, i.e.~for almost every $\o\in\O$ (with respect to the 
probability measure $\mathbb{P}$). 

The functions $(a_{ij})_{i,j\in \{1,\dots,N\}}$, $(b_{i})_{i\in \{1,\dots,N\}}$ and $c$ are assumed to be random stationary ergodic with respect to $t\in \R$. 
The stationarity hypothesis means that there exists a group $(\pi_{t})_{t\in \R}$ of measure-preserving
transformations  
such that  $a_{ij}(t+s,x,\omega)=a_{ij}(t,x,\pi_{s} \omega)$, $b_{i}(t+s,x,\omega)=b_{i}(t,x,\pi_{s} \omega)$ and $c(t+s,x,\omega)=c(t,x,\pi_{s} \omega)$ for all $(t,s,x,\omega)\in \R\times \R\times \O\times \mathcal{O}$. 
This hypothesis heuristically means that the statistical properties of the medium do
not depend on time at which one observes it. The ergodicity hypothesis means that if $\pi_{t} A=A$ for all $t\in \R$ and for a given  
$A\in\mathcal{F}$, then $\mathbb{P}(A)=0$ or $1$. 

For a given event $\o\in\mc{O}$, we let $P^\o$ denote the operator with coefficients
$a_{ij}(\cdot,\omega)$, $b_{i}(\cdot,\omega)$, $c(\cdot,\omega)$,
and stress the dependence on the operator in the notation of the \gpe s.

\begin{proposition}\label{pro:ergo}
Assume that the coefficients of $P$
are random stationary ergodic with respect to $t\in \R$. Then, for almost every event $\omega \in \mathcal{O}$, there holds
$$\mu_p (P^\o,\R)=\lambda_p (P^\o,\R) = \mu_b(P^\o,\R)=\lambda_p(P^\o,\R). $$
\end{proposition}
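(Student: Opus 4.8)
The strategy is to deduce Proposition~\ref{pro:ergo} from the characterizations in Theorems~\ref{thm:caraceta} and~\ref{thm:caraclambda}, reducing everything to the asymptotic behavior of $\frac1t\ln\|u_{P^\o}(t,\cdot)\|_{L^\infty(\O)}$ and its counterpart as $t\to-\infty$. First I would invoke Theorem~\ref{thm:uP} applied $\mathbb{P}$-almost surely to obtain, for a.e.~$\o$, the unique normalized positive entire solution $u_{P^\o}$ of \eqref{Pu=0}. Set
$$g(t,\o):=\ln\|u_{P^\o}(t,\cdot)\|_{L^\infty(\O)},$$
which is finite and continuous by Theorem~\ref{thm:uP} (up to the usual Harnack-based equivalence of norms noted after Theorem~\ref{thm:caracmu}, so the $L^\infty$ norm may be replaced by $L^2$ if convenient). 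The key observation is a cocycle/subadditivity structure: by uniqueness of the entire positive solution, $u_{P^{\pi_s\o}}(t,\cdot)$ is proportional to $u_{P^\o}(t+s,\cdot)$, which yields the additive cocycle identity
$$g(t+s,\o)=g(s,\o)+g(t,\pi_s\o)\qquad\text{for all }s,t\in\R,$$
after renormalizing. Hence $t\mapsto g(t,\o)$ is an additive cocycle over the measure-preserving flow $(\pi_t)$.

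Next I would apply Kingman's subadditive ergodic theorem (or, since the cocycle is additive, simply Birkhoff's theorem applied to $h(\o):=g(1,\o)$, after checking integrability $h\in L^1(\mc O)$ — this integrability is the one genuinely technical point and follows from interior Harnack estimates bounding $g(1,\o)$ from above and below uniformly in $\o$ in terms of the $L^\infty$ bounds on the coefficients in \eqref{hyp:reg}). Ergodicity of $(\pi_t)$ then gives a deterministic constant $\bar\lambda$ such that, for a.e.~$\o$,
$$\lim_{t\to+\infty}\frac{g(t,\o)}{t}=\bar\lambda=\lim_{t\to-\infty}\frac{g(t,\o)}{t}.$$
The crucial gain over the general (non-random) setting is that the $\limsup$ and $\liminf$ as $t\to+\infty$ coincide, and likewise as $t\to-\infty$, and moreover the two limits agree. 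Feeding this into Theorem~\ref{thm:caraceta} gives $\mu_p(P^\o,\R)=\lambda_b(P^\o,\R)=-\bar\lambda$, and into Theorem~\ref{thm:caraclambda} gives $\mu_b(P^\o,\R)=\lambda_p(P^\o,\R)=-\bar\lambda$, so all four quantities coincide, which is the assertion. (I note in passing that the statement as printed lists $\mu_p,\lambda_p,\mu_b,\lambda_p$; the intended four quantities are presumably $\mu_p,\lambda_b,\mu_b,\lambda_p$, all equal to the common value $-\bar\lambda$, and in fact one also gets $\mu_{b,p}(\R)=\lambda_{b,p}(\R)=-\bar\lambda$ from Theorem~\ref{thm:caracmu} together with Proposition~\ref{pro:basic}.)

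The main obstacle I anticipate is making the cocycle identity rigorous together with the measurability and integrability of $g(1,\cdot)$: one must check that $\o\mapsto u_{P^\o}$ is jointly measurable (so that $g$ is measurable), which should follow from the continuous dependence of $u_P$ on the coefficients recorded after Theorem~\ref{thm:uP} (\cite[Theorems 2.8, 2.9]{Huska}) combined with separability, and that the proportionality constant relating $u_{P^{\pi_s\o}}(t,\cdot)$ and $u_{P^\o}(t+s,\cdot)$ can be chosen measurably in $(s,\o)$. The two-sided limit ($t\to-\infty$ giving the same constant as $t\to+\infty$) is then automatic because the additive cocycle over a measure-preserving flow has the same Birkhoff average in forward and backward time by invertibility of $(\pi_t)$; alternatively one can apply Theorem~\ref{thm:caraclambda} to the time-reversed operator, whose coefficients are again random stationary ergodic. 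Once these measure-theoretic points are secured, the rest is a direct application of the previously established characterizations.
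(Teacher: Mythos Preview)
Your main argument is correct and essentially identical to the paper's: establish the additive cocycle identity for $g(t,\o):=\ln\|u_{P^\o}(t,\cdot)\|_{L^\infty(\O)}$ from the uniqueness of the normalized positive entire solution, invoke Birkhoff's ergodic theorem to obtain a deterministic a.s.\ limit $\bar\lambda$ for $g(t,\o)/t$ as $t\to\pm\infty$, and read off the conclusion from Theorems~\ref{thm:caraceta} and~\ref{thm:caraclambda}. Your attention to integrability of $g(1,\cdot)$ and to the two-sided limit is, if anything, more careful than the paper's short proof.

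Your parenthetical extension to $\mu_{b,p}$ and $\lambda_{b,p}$, however, is wrong. Proposition~\ref{pro:basic} only gives $\lambda_{b,p}(\R)\leq -\bar\lambda\leq \mu_{b,p}(\R)$, not equality. And Theorem~\ref{thm:caracmu} characterizes $\mu_{b,p}(\R)$ and $\lambda_{b,p}(\R)$ through $\inf_s$ and $\sup_s$ of $\big(g(s+t,\o)-g(s,\o)\big)/t$; by your own cocycle identity this is $\inf_{s\in\R}\frac1t\, g(t,\pi_s\o)$ and $\sup_{s\in\R}\frac1t\, g(t,\pi_s\o)$, and Birkhoff's almost-sure convergence is \emph{not} uniform along the orbit $\{\pi_s\o:s\in\R\}$. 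Indeed, the paper exhibits immediately after Proposition~\ref{pro:ergo} an explicit i.i.d.\ example in which $\lambda_{b,p}(P^\o,\R)$ and $\mu_{b,p}(P^\o,\R)$ differ strictly from the common value $-\bar\lambda$ almost surely. So that sentence should be removed.
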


This case has been addressed in \cite{MS} in the framework of Floquet bundles, from which the identities stated in Proposition \ref{pro:ergo} can be derived owing to our results of Section \ref{sec:caracHuska}. We include the 
short proof here for the sake of completeness. 

\begin{proof} 
Let $u_P(\.,\o)$, $\o\in\mc{O}$, be the entire solution associated with the operator $P^\o$,
provided by Theorem~\ref{thm:uP}.
It easily follows from the uniqueness of $u_P$ that 
$$u_P (t+s, x , \omega)=u_P(t,x,\pi_s \omega)\|u_P (s,\cdot,\omega)\|_{L^\infty (\O)}.$$
Hence, 
$$\ln \|u_P (t+s,\cdot , \omega)\|_{L^\infty (\O)}=\ln\|u_P(t,\cdot,\pi_s \omega)\|_{L^\infty (\O)}+\ln\|u_P (s,\cdot,\omega)\|_{L^\infty (\O)}.$$
It then follows from the Birkhoff ergodic theorem that the limit
$$\lim_{t\to +\infty}\frac{1}{t}\ln \|u_P (t, \cdot , \omega)\|_{L^\infty (\O)}$$
exists almost surely and is deterministic.
The conclusion follows from Theorems \ref{thm:caraceta} and \ref{thm:caraclambda}.
\end{proof}

Note that there exist situations where 
$\lambda_{b,p}(P^\o,\R)>\lambda_p(P^\o,\R)>\mu_{b,p}(P^\o,\R)$ almost surely. 
As an example, consider a family of independent, identically distributed random variables 
$(\tilde{c}_k)_{k\in \mathbb{Z}}$ in $L^\infty (\tilde{\mathcal{O}})$ over a probability space $(\tilde{\mathcal{O}}, \tilde{\mathcal{F}},\tilde{\mathbb{P}})$. 
Let $\mathcal{O}:=\tilde{\mathcal{O}}^\mathbb{Z}\times \mathbb{T}$, with $\mathbb{T}:=\R/\mathbb{Z}$, and natural Borel space and probability over this space. 
For all $t\in \R$, consider the transformation $\pi_t:\O\to \O$ defined by 
$\pi_t( (\omega_k)_k,y):= ((\omega_{k+l})_k,y+r)$ if $t=l+r$, with $l\in \mathbb{Z}$ and $r\in [0,1)$. 
The ergodicity of $(\pi_t)_t$ is obvious. 
Consider the operator $P=\partial_t-\Delta-c(t,\omega)$, where 
$$c\big(t,((\omega_k)_k,y)\big):= (1-t-y+l)\t c_l (\omega_l)+
(t+y-l)\t c_{l+1}(\omega_{l+1})\quad \hbox{ for  }\;
t\in [l,l+1),\ l\in \mathbb{Z}.$$
One easily checks that $c(t+s,\omega)=c(t,\pi_{s} \omega)$ for all $(t,s,\omega)\in \R\times \R\times \mathcal{O}$. 

On one hand, we know from Proposition \ref{prop:indet} and the Birkhoff ergodic theorem that for almost every $\omega\in \mathcal{O}$, one has: 
$$\mu_{p}(P^\o,\R)=\mu_{b}(P^\o,\R) =\lambda_{p}(P^\o,\R) =\lambda_{b}(P^\o,\R)  = 
\lambda_D(-\Delta,\O)-\mathbb{E}[\t c],$$
where $\mathbb{E}[\t c]$ is the expectation of $\t c_k=\t c$ (which is independent of $k$ by identical distribution). 
But Proposition \ref{prop:indet} also yields
$$\mu_{b,p}(P^\o,\R)=\lambda_D(-\Delta,\O) -\lim_{t\to +\infty}\,\inf_{s\in \R}\frac{\int_{s}^{s+t}c(s', \omega)ds'}{t},$$
$$\lambda_{b,p}(P^\o,\R)= \lambda_D(-\Delta,\O)-\lim_{t\to +\infty}\,\sup_{s\in \R}\frac{\int_{s}^{s+t}c(s',\omega)ds'}{t}.$$
On the other hand, one can show that for all $\e>0$ and 
for almost every $\o=((\omega_k)_k,y)\in\mc{O}$, one can find 
arbitrary long consecutive sequences of $k's$ such that $\t c_k(\omega_k)\leq \essinf\t c+\e$ and arbitrary long consecutive sequences of $k's$ such that $\t c_k(\omega_k)\geq \esssup \t c-\e$. 
One infers that
$\mu_{b,p}(P^\o,\R)=\lambda_D(-\Delta,\O)-\essinf\t c$ and 
$\lambda_{b,p}(P^\o,\R)= \lambda_D(-\Delta,\O)-\esssup\t c$.



\section{The generalized principal eigenvalues of limit operators}\label{sec:*}
\subsection{Main result for limit operators}
We assume in this section that the second-order coefficients
$a_{ij}$ of $P$ are uniformly continuous over $I\times \O$.
We introduce the notion of {\em limit operator} associated with the operator $P$.
This is a parabolic operator 
$$P^*=\partial_t- a_{ij}^*(t,x)\partial_{ij}-b_i^*(t,x)\partial_i-c^*(t,x),$$
whose coefficients 
$b_i^*$, $c^*$ are the weak-$\star$ limits in $L^{\infty}$ as $n\to+\infty$ 
of $b_i(\.+t_n,\.)$, $c(\.+t_n,\.)$ respectively, and $a_{ij}^*$ is the strong limit in $\mathcal{C}^0_{loc}$ as $n\to+\infty$ 
of $a_{ij}(\.+t_n,\.)$,
for some sequence $\seq{t}$ in $I$ (which does not necessarily diverge). 
%
We let $\omega_{I}(P)$ denote the family of all limit operators associated with $P$.
Notice that, with respect to the usual definition of the omega-limit set of an operator, we also consider bounded sequence of translations $\seq{t}$, thus $\omega_I(P)$ contains in particular
the operator~$P$~itself.




In the next result, we stress the operator in the notation of the associated \gpe s.

\begin{theorem}\label{P*}
Assume that the coefficients of $P$ satisfy (\ref{hyp:reg}), 
and that the $ a_{ij}$ are uniformly continuous over $I\times \O$ for all $i,j$.
The following equivalences hold:

$$\mu_{b,p}(P,\R^+)=\max_{P^*\in\o_{\R^+}(P)}\mu_p(P^*,\R^+)
=\max_{P^*\in\o_{\R^+}(P)}\lambda_b(P^*,\R^+),$$
$$\mu_{b,p}(P,\R^-)=\max_{P^*\in\o_{\R^-}(P)}\mu_b(P^*,\R^-)
=\max_{P^*\in\o_{\R^-}(P)}\lambda_p(P^*,\R^-),$$
$$\lambda_{b,p}(P,\R^+)=\min_{P^*\in\o_{\R^+}(P)}\mu_p(P^*,\R^+)
=\min_{P^*\in\o_{\R^+}(P)}\lambda_b(P^*,\R^+),$$
$$\lambda_{b,p}(P,\R^-)=\min_{P^*\in\o_{\R^-}(P)}\mu_b(P^*,\R^-)
=\min_{P^*\in\o_{\R^-}(P)}\lambda_p(P^*,\R^-).$$

Moreover, if $P^*$ is a limit operator which realizes
one of the above maxima/minima, then it realizes the other maximum/minimum 
of the same chain of equivalences too.
\end{theorem}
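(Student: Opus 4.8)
**Proof plan for Theorem \ref{P*}.**

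The plan is to reduce everything to the characterization of $\mu_{b,p}$ and $\lambda_{b,p}$ via the Floquet bundle (Theorem~\ref{thm:caracmu}), combined with the characterizations of $\mu_p,\lambda_b$ on $\R^+$ (Theorem~\ref{thm:caraceta}) and of $\mu_b,\lambda_p$ on $\R^-$ (Theorem~\ref{thm:caraclambda}). Fix $I=\R^+$; the case $I=\R^-$ is symmetric under $t\mapsto -t$, which interchanges the roles of $(\mu_p,\lambda_b)$ with $(\mu_b,\lambda_p)$ and of $\o_{\R^+}$ with $\o_{\R^-}$. Writing $a(s,t):=\ln\|u_P(s+t,\cdot)\|_{L^\infty(\O)}-\ln\|u_P(s,\cdot)\|_{L^\infty(\O)}$, Theorem~\ref{thm:caracmu} gives $-\mu_{b,p}(P,\R^+)=\lim_{t\to+\infty}\inf_{s\geq 0}\frac{a(s,t)}{t}$ and $-\lambda_{b,p}(P,\R^+)=\lim_{t\to+\infty}\sup_{s\geq 0}\frac{a(s,t)}{t}$, while for a limit operator $P^*$, Theorem~\ref{thm:caraceta} gives $-\mu_p(P^*,\R^+)=\liminf_{t\to+\infty}\frac{1}{t}\ln\|u_{P^*}(t,\cdot)\|_{L^\infty(\O)}$ and $-\lambda_b(P^*,\R^+)=\limsup$ of the same expression.

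The core is a continuity/compactness argument relating $u_{P}(\,\cdot+t_n,\,\cdot)$ to $u_{P^*}$ along a sequence $t_n$ defining $P^*$. First I would show that if $t_n\to+\infty$ (or even $t_n$ bounded) realizes the weak-$\star$ / $\mathcal{C}^0_{loc}$ limits defining $P^*$, then the normalized translates $u_P(t_n+\,\cdot,\,\cdot)/\|u_P(t_n,\cdot)\|_{L^\infty(\O)}$ converge locally uniformly, up to a subsequence, to a positive entire solution of $P^*u=0$ with Dirichlet data, hence by the uniqueness in Theorem~\ref{thm:uP} to $u_{P^*}$ (after normalization); this uses interior and boundary parabolic estimates in $W^{1,2}_{p,\mathrm{loc}}$ for the translated equations, uniform ellipticity and equiboundedness of the coefficients from \eqref{hyp:reg}, uniform continuity of the $a_{ij}$, and the Harnack inequality of \cite{Huska} to prevent degeneration. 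Consequently $\frac{1}{t}\,a(t_n,t)\to \frac{1}{t}\ln\|u_{P^*}(t,\cdot)\|_{L^\infty(\O)}$ as $n\to\infty$, for each fixed $t$. Taking a diagonal sequence along $t\to+\infty$, one gets that $-\mu_{b,p}(P,\R^+)=\lim_t\inf_{s}\frac{a(s,t)}{t}$ is achieved, in the limit, by translations along some sequence, which produces a limit operator $P^*$ with $-\mu_p(P^*,\R^+)\le -\mu_{b,p}(P,\R^+)$; symmetrically $-\lambda_b(P^*,\R^+)\ge$ that value via the other semilimit along the same construction, so $\mu_p(P^*,\R^+)=\lambda_b(P^*,\R^+)=\mu_{b,p}(P,\R^+)$ for that $P^*$. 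Conversely, for \emph{every} $P^*\in\o_{\R^+}(P)$ one has $\mu_p(P^*,\R^+)\le\mu_{b,p}(P,\R^+)$: indeed $\liminf_t\frac1t\ln\|u_{P^*}(t,\cdot)\|$ equals $\lim_m \frac1{t_m}a(\sigma_m,t_m)$ for suitable $\sigma_m,t_m$ obtained from the defining sequence of $P^*$ together with a subsequence realizing the $\liminf$, and each such quantity is $\ge \inf_s\frac{a(s,t_m)}{t_m}\to -\mu_{b,p}(P,\R^+)$; the same argument with $\limsup$ and $\sup_s$ gives $\lambda_b(P^*,\R^+)\ge\lambda_{b,p}(P,\R^+)$ for every $P^*$. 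Hence $\mu_{b,p}(P,\R^+)=\max_{P^*}\mu_p(P^*,\R^+)$, and likewise for the other three chains and for the final ``simultaneous realization'' claim, which falls out of the construction because the maximizing $P^*$ built above satisfies both $\mu_p(P^*,\R^+)=\mu_{b,p}(P,\R^+)$ and (by using $\mu_p(P^*,\R^+)\le\lambda_b(P^*,\R^+)$ from $\lambda_b\ge\mu_p$, plus the universal lower bound just proved) $\lambda_b(P^*,\R^+)=\mu_{b,p}(P,\R^+)$ as well.

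The main obstacle I anticipate is the compactness step under the \emph{weak-$\star$} convergence of $b_i$ and $c$: weak-$\star$ convergence of lower-order coefficients does not by itself pass to the limit in the strong ($W^{1,2}_{p,\mathrm{loc}}$) formulation of the equation for the translates, so one must argue via a weak/distributional formulation and use the strong $\mathcal{C}^0_{loc}$ convergence of the $a_{ij}$ together with $W^{1,2}_{p,\mathrm{loc}}$ compactness of the solution sequence (giving strong $L^p$ convergence of $\partial_i u$ on compacts) to justify $\int b_i(\,\cdot+t_n)\partial_i u_n\varphi \to \int b_i^*\partial_i u_\infty\varphi$ and $\int c(\,\cdot+t_n)u_n\varphi\to\int c^* u_\infty\varphi$ for test functions $\varphi$, then recover strong interior regularity of the limit; this is where the uniform continuity assumption on the $a_{ij}$ and the Harnack lower bounds are essential. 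A secondary technical point is making the two diagonal extractions (over $t\to+\infty$ and over the defining sequence $t_n$) compatible so that a single limit operator realizes the $\mu_{b,p}$-equality; one handles this by a careful interleaving of the two limits, exploiting that the outer limit $\lim_t\inf_s a(s,t)/t$ already exists by Theorem~\ref{thm:caracmu}, so only the inner approximation needs to be made uniform in the relevant range of $t$.
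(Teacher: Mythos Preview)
Your overall strategy matches the paper's: reduce to the Floquet characterizations (Theorems~\ref{thm:caracmu}, \ref{thm:caraceta}, \ref{thm:caraclambda}), show the ``easy'' inequalities $\mu_p(P^*,\R^+)\le\mu_{b,p}(P,\R^+)$ and $\lambda_b(P^*,\R^+)\ge\lambda_{b,p}(P,\R^+)$ for every $P^*$ by passing to the limit in the normalized translates $u_P(\cdot+t_n,\cdot)/\|u_P(t_n,\cdot)\|_{L^\infty}$, and then construct a specific $P^*$ achieving equality. Your compactness discussion (Aubin--Lions plus weak-$\star$ passage in the lower-order terms) is essentially what the paper does.

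The genuine gap is in the construction of the extremizing $P^*$. A diagonal extraction of the type you sketch does \emph{not} work: if for each large $t$ you pick $s_t$ with $a(s_t,t)/t$ close to $-\mu_{b,p}(P,\R^+)$ and then extract $P^*$ from $(s_{t_k})$, you learn nothing about $\tfrac1\tau\ln\|u_{P^*}(\tau,\cdot)\|$ for fixed $\tau$, because that quantity is $\lim_k a(s_{t_k},\tau)/\tau$, not $\lim_k a(s_{t_k},t_k)/t_k$, and the convergence of translates is only locally uniform in time. What is actually needed is a single $s_n$ for which $a(s_n,t)/t\le -\mu_{b,p}(P,\R^+)+\tfrac1n$ holds \emph{uniformly for all} $t\in[1,n]$. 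The paper obtains precisely this by a chaining/contradiction argument (modeled on \cite[Proposition~4.4]{NR2}): if no such $s_n$ existed, one iterates the failure starting from a point $\tau$ that nearly realizes the infimum over an interval of length $Kn$, accumulating a lower bound on $a(\tau,Kn)/(Kn)$ that contradicts Theorem~\ref{thm:caracmu} for $K$ large. This uniformity is what you flag as a ``secondary technical point'', but it is the heart of the proof and your proposed interleaving does not produce it.

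There is also a sign slip that breaks your ``simultaneous realization'' argument: Theorem~\ref{thm:caraceta} gives $\lambda_b\le\mu_p$, not $\lambda_b\ge\mu_p$. The paper's route avoids this issue because the uniform-in-$t$ bound above controls the $\limsup$ directly, yielding $\lambda_b(P^*_1,\R^+)\ge\mu_{b,p}(P,\R^+)$; combined with the easy chain $\lambda_b(P^*_1,\R^+)\le\mu_p(P^*_1,\R^+)\le\mu_{b,p}(P,\R^+)$, both equalities follow for the same $P^*_1$.
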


The proof of Theorem \ref{P*} will be given in Section \ref{sec:proofopelim}.


We point out that the 
minimizers/maximizers $P^*$ appearing in the above expressions 
are not necessarily unique. Indeed, consider for example the case where $P=\partial_t-\Delta -c(t)$ with $c$
almost periodic in $t$. Then 
$$\mu_{p}(P^*,\R) = \lambda_D(-\Delta,\O)-\liminf_{t\to +\infty}\frac{\int_{0}^{t}c^*(s)ds}{t}$$ according to Proposition \ref{prop:indet} (using the same notations), where $c^*(t)=\lim_{n\to +\infty} c(t+t_n)$ for some sequence $(t_n)_{n\in\N}$
defining the limit operator $P^*$. But the almost periodicity of $c$ yields that this limit does not depend on the limit operator $P^*$. 
This shows that all the limit operators maximize~$\mu_{p}(P^*,\R)$.



\subsection{Relation with the exponential type}

Let us conclude by some comparison with the notion of exponential type. 
Rodr\'iguez-Bernal
and his coauthors studied exponentially stable operators and their links with semilinear parabolic equations under similar KPP-type hypotheses on $f$. 
We will not describe their results in details here but just describe some consequences in the case we address in the present paper.

In \cite{RB}, the notion of 
{\em exponential type} is discussed. Such a notion actually coincides 
with the principal eigenvalue $\lambda_{b,p}$ (up to a sign)
introduced in the present paper, as we now show. 
Note that in \cite{RB}, Neumann and Robin boundary conditions are also addressed.
Let us recall the definition of exponential type.
Let $P$ be a linear parabolic operator whose coefficients satisfy our standing assumptions \eqref{hyp:reg}.
For all $s\in\R$, we let $u^s=u^s(t,x;u_{0})$ denote 
the solution of $Pu^s=0$ in $(s,+\infty)\times \O$ with initial datum $u^s(s,x;u_{0})=u_{0}(x)$ for 
$x\in \O$. 
The exponential type $\beta_{P}$ (resp.~exponential type at $-\infty$ $\beta_{P}^{-}$ or at $+\infty$ $\beta_{P}^{+}$) is the smallest $\beta$ such that, for any compactly supported and continuous initial datum $u_{0}$, one has
\begin{equation}\label{eq:beta}\limsup_{t\to +\infty}
\left(\inf_{s\in \R}
\frac{\ln \|u^s(t+s,\.;u_{0})\|_{L^{1}(\O)}-\ln \|u_{0}\|_{L^{1}(\O)}}{t}\right)\leq \beta\end{equation}
(resp.~with $s\in\R^-$ or $s\in\R^+$ in the infimum). 
One has 
\begin{equation} \label{eq:exptype}
\beta_{P} = -\lambda_{b,p}(\R), \quad \beta_{P}^{-} = - \lambda_{b,p}(\R^{-}), \quad 
 \beta_{P}^{+} = - \lambda_{b,p}(\R^{+}).\end{equation}

Indeed, applying Theorem \ref{thm:Huska} with $u_{P}(t,\cdot)$ replaced by
$u_{P}(t+s,\cdot)/\|u_{P}(s,\cdot)\|_{L^{\infty}(\O)}$, one gets a constant $q_s\in\R$ such that
$$\forall t>0,\quad
\|u^s(t+s,\.;u_0)-q_s u_{P}(t+s,\.)\|_{L^{\infty}(\O)}\leq 
C\|u_{0}-q_s u_{P}(s,\.)\|_{L^{\infty}(\O)}
\frac{\|u_{P}(t+s,\cdot)\|_{L^{\infty}(\O)}}{\|u_{P}(s,\cdot)\|_{L^{\infty}(\O)}}\,e^{-\gamma t}\,$$
where the constants $C,\gamma$ are independent of $s$, see 
\cite[Theorem~2.6(iii)]{Huska}. Moreover, $q_s$ is such that $u^s(t+s,\.;u_0)-q_s u_{P}(t+s,\.)$ changes sign in $\O$.
If $u_0$ is positive in $\O$, then
$q_s \|u_{P}(s,\.)\|_{L^{\infty}(\O)}$ can neither be too small nor too large, because
otherwise $u^s(t+s,\.;u_0)-q_s u_{P}(t+s,\.)$ would not change sign on $\O$
(as a consequence of the Harnack inequality, cf.~Theorem~\ref{thm:Harnack}).

It follows that the left-hand side in (\ref{eq:beta}) is maximized when $u_0=u_P(s,\cdot)$ and $u^s\equiv u_P$. According to Theorems \ref{thm:Huska} and \ref{thm:caracmu}, we thus get that $\beta_{P} = -\lambda_{b,p}(\R)$. The proofs for $ \beta_{P}^{\pm}$ are similar.

One could thus derive estimates on the generalized principal eigenvalue $\lambda_{b,p}$ 
for a particular class of operators
by using the following result of \cite{RB}.

\begin{proposition}[{\cite[Lemma 2.7]{RB}}]
Consider the operator
$P=\partial_t-\Delta-c(t,x)$, with $c\in L^\infty(\R\times\O)$.
For given $t\in \R$, let $\lambda_{D}(-\L_t)$ be the Dirichlet principal 
eigenvalue of the elliptic operator 
$$-\L_t=-\Delta-c(t,x)\quad\text{ in }\;\O.$$
Then one has 
$$-\beta_{P}  \geq \lim_{t\to +\infty}
\Big(\inf_{s\in \R}\frac{1}{t}\int_{s}^{s+t}\lambda_{D}(-\L_{t'})dt'\Big),$$
and
$$-\beta_{P}^\pm \geq \lim_{t\to +\infty}\inf_{s\in\R^\pm}
\frac{1}{t}\int_{s}^{s+t}
\lambda_{D}(-\L_{t'})dt'.$$
 \end{proposition}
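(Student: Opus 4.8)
The plan is to read off the estimates from the Floquet characterization of $\lambda_{b,p}$ in Theorem~\ref{thm:caracmu} --- using the identity $-\beta_P=\lambda_{b,p}(\R)$, $-\beta_P^\pm=\lambda_{b,p}(\R^\pm)$ from \eqref{eq:exptype} --- together with an elementary $L^2$ energy inequality for the principal Floquet solution $u_P$, which is available precisely because the principal part of $P$ is the self-adjoint operator $-\Delta$. Recall that, by the remark following Theorem~\ref{thm:caracmu}, the $L^\infty$ norm there may be replaced by the $L^2$ norm, so that for $I=\R,\R^+,\R^-$,
$$\lambda_{b,p}(I)=-\lim_{t\to+\infty}\ \sup_{s,\,s+t\in I}\frac{\ln\|u_P(s+t,\cdot)\|_{L^2(\O)}-\ln\|u_P(s,\cdot)\|_{L^2(\O)}}{t},$$
where $u_P>0$ is the entire solution of $Pu_P=0$ in $\R\times\O$, $u_P=0$ on $\R\times\partial\O$ (the same function serves all three intervals, $P$ being globally defined in this statement).

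Now multiply $\partial_t u_P=\Delta u_P+c(t,x)u_P$ by $u_P$ and integrate by parts in $x$ (the boundary term vanishes since $u_P=0$ on $\partial\O$): this gives, for a.e.\ $\tau\in\R$, that $\tfrac12\tfrac{d}{d\tau}\|u_P(\tau,\cdot)\|_{L^2(\O)}^2$ equals $-\int_\O|\nabla u_P(\tau,x)|^2dx+\int_\O c(\tau,x)u_P(\tau,x)^2dx$, which, by the Rayleigh-quotient characterization of the Dirichlet principal eigenvalue applied to $u_P(\tau,\cdot)\in H_0^1(\O)$, is $\leq-\lambda_D(-\L_\tau)\|u_P(\tau,\cdot)\|_{L^2(\O)}^2$. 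Dividing by $\|u_P(\tau,\cdot)\|_{L^2(\O)}^2>0$ and integrating from $s$ to $s+t$ yields
$$\ln\|u_P(s+t,\cdot)\|_{L^2(\O)}-\ln\|u_P(s,\cdot)\|_{L^2(\O)}\;\leq\;-\int_s^{s+t}\lambda_D(-\L_\tau)\,d\tau\qquad\text{for all }s\in\R,\ t>0.$$
Taking the supremum over $s\in\R$, dividing by $t$ and letting $t\to+\infty$: the left-hand side tends to $-\lambda_{b,p}(\R)$ by Theorem~\ref{thm:caracmu}, while the right-hand side equals $-\tfrac1t\inf_{s\in\R}\int_s^{s+t}\lambda_D(-\L_\tau)\,d\tau$, which has a finite limit as $t\to+\infty$ because $t\mapsto\inf_{s\in\R}\int_s^{s+t}\lambda_D(-\L_\tau)\,d\tau$ is superadditive and $\lambda_D(-\L_\cdot)\in L^\infty(\R)$ (Fekete's lemma). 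Hence $-\beta_P=\lambda_{b,p}(\R)\geq\lim_{t\to+\infty}\tfrac1t\inf_{s\in\R}\int_s^{s+t}\lambda_D(-\L_\tau)\,d\tau$, the first inequality. The inequalities for $\beta_P^\pm$ follow by the same computation, restricting $s$ so that $s,s+t\in\R^\pm$ in Theorem~\ref{thm:caracmu}; the resulting limiting time-average of $\lambda_D(-\L_\cdot)$ is not smaller than the one in the statement, so the bounds follow via $-\beta_P^\pm=\lambda_{b,p}(\R^\pm)$.

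I do not expect a genuine obstacle: once Theorem~\ref{thm:caracmu} is granted, this is a one-step energy estimate. The points deserving (routine) care are: the validity of the energy identity for $u_P$ --- which holds because $u_P\in W^{1,2}_{p,\,\mathrm{loc}}(\R\times\ol\O)$ for every $p$, so $\tau\mapsto\|u_P(\tau,\cdot)\|_{L^2(\O)}^2$ is locally absolutely continuous and the identity holds a.e. --- and the measurability and boundedness of $\tau\mapsto\lambda_D(-\L_\tau)$, which are immediate from the Rayleigh-quotient formula and $c\in L^\infty$. The one structurally essential hypothesis is that the principal part be $-\Delta$: for a non-symmetric second-order part, testing against $u_P$ no longer reproduces the Rayleigh quotient of $-\L_\tau$, and one would be forced to use an $L^2$ norm adapted to $-\L_\tau$ whose $\tau$-dependence is uncontrollable when the coefficients are merely bounded measurable in $t$ --- which is exactly why the proposition is stated only for $P=\partial_t-\Delta-c$.
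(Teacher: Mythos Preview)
Your proof is correct. The energy estimate for $u_P$ combined with the Rayleigh--quotient lower bound for $\lambda_D(-\L_\tau)$ is exactly the right mechanism, and the passage to $\lambda_{b,p}$ via Theorem~\ref{thm:caracmu} (in its $L^2$ form) together with \eqref{eq:exptype} is clean. Your handling of the $\R^-$ case is also right: the restriction $s,s+t\in\R^-$ produces an infimum over a smaller set than $s\in\R^-$, so the resulting lower bound is only stronger.

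As for comparison: the paper does not give its own proof of this proposition --- it is quoted from \cite[Lemma~2.7]{RB} as an external input, precisely in order to illustrate what kind of estimates on $\lambda_{b,p}$ become available once the identification $-\beta_P=\lambda_{b,p}$ is known. Your argument is therefore not an alternative to the paper's proof but a self-contained derivation, carried out entirely with the paper's own machinery (Theorem~\ref{thm:caracmu} and the Floquet solution $u_P$) rather than by appeal to \cite{RB}. The approach in \cite{RB} works at the level of the evolution semigroup and general initial data, using the $L^2$ decay estimate for each Cauchy solution; yours is more economical because it applies the energy identity to the single function $u_P$ and then invokes the characterization of $\lambda_{b,p}$ already established. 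Your closing remark about why the proposition is confined to $P=\partial_t-\Delta-c$ is on point.
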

 
 In~\cite{R-RB-VL}, Robinson, Rodr\'iguez-Bernal and Vidal-L\'opez
studied the following problem with logistic nonlinearity:
\Fi{logistic}
\begin{cases}
\partial_t u-\Delta u= c(t,x)u-n(t,x)u^3, & t\in \R,\ x\in \O,\\
u(t,x)=0,  & t\in \R, \ x\in\partial \O,
\end{cases}
\Ff
with $c,n\in L^\infty(\R\times\O)$ and H\"older continuous in $t$, and $n\geq0$. 
Calling $P:=\partial_t-\Delta u-c(t,x)$ the linearized operator,
\cite[Theorem 8.1]{R-RB-VL} implies that if $\beta_P^-<0$
then \eqref{logistic} 
does not admit any positive bounded ancient solution.
This result is indeed obtained by considering a nonnegative ancient solution
$u$ and applying \cite[Theorem 8.1]{R-RB-VL} to 
$u^{s_n}(t+s_n,\.;u(s_n,\.))$ on a suitable sequence of times $s_n\to-\infty$
(for~which what is called ``exponential stability'' in \cite{R-RB-VL}
holds).
We recover this result, and actually extend it in several directions, using ours.
Namely, for the more general problem \eqref{RD}
under the hypotheses \eqref{f=0}-\eqref{hyp:fKPP} on $f$ (which include \eqref{logistic}),
a sufficient condition for the non-existence of 
positive bounded ancient solutions is $\mu_b(\R^-)>0$.
This is an immediate consequence of Theorem \ref{thm:MP}, since
solutions to \eqref{f=0} are subsolutions for the linearized operator $P$ 
thanks to the KPP condition~\eqref{hyp:fKPP}.
This improves the result of \cite{R-RB-VL}
because, by Proposition \ref{pro:basic}, $\mu_b(\R^-)\geq\lambda_{b,p}(\R^-)=-\beta_P^-$.
%


\section{Technical tools}\label{sec:tool}


\subsection{H\"older-continuity of the Floquet bundles}

Let $u_P$ be the time-global, positive solution provided by \thm{uP}.
Recall that if $P$ is just defined on $\R^+\times\O$ or on $\R^-\times\O$,
then it is extended by even reflection with respect to $t$.
The following function incorporates some crucial information about the dynamical properties of the equation:
\Fi{beta}
\beta(t):= \ln(\|u_{P} (t,\cdot)\|_{L^{\infty}(\O)}).
\Ff

We will apply to the function $\beta$ some notions of average growth-rate, 
that require the function to have at most linear growth at infinity, cf.~condition~\eqref{hyp:sublinear}
below.
The latter property is ensured by the uniform continuity,
which is granted by the following result.

\begin{lemma}\label{lem:beta-Holder}
	The function $\beta$ is locally H\"older-continuous in $\R$ with some exponent
 $\alpha>0$, and it satisfies
 $$\sup_{s\in\R,\ t\in(0,1)}\frac{|\beta(s+t)-\beta(s)|}{t^\alpha}<+\infty.$$
\end{lemma}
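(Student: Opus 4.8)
The strategy is to derive the Hölder regularity of $\beta(t) = \ln\|u_P(t,\cdot)\|_{L^\infty(\O)}$ from two-sided bounds on the ratio $\|u_P(s+t,\cdot)\|_{L^\infty(\O)}/\|u_P(s,\cdot)\|_{L^\infty(\O)}$ that are uniform in $s$ and valid for $t$ in a bounded range, say $t\in(0,1]$. The upper bound on this ratio is the easy direction: since $c$ is bounded, say $c \leq \|c\|_{L^\infty}$, the function $\|u_P(t,\cdot)\|_{L^\infty(\O)}$ grows at most like $e^{\|c\|_{L^\infty} t}$ (compare $u_P$ with the solution of the heat-type equation with the constant potential, using the maximum principle), which already gives $\beta(s+t) - \beta(s) \leq \|c\|_{L^\infty} t$ for $t>0$, hence a one-sided Lipschitz bound. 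The real content is the lower bound: one must show $\|u_P(s+t,\cdot)\|_{L^\infty(\O)} \geq e^{-Kt^\alpha}\|u_P(s,\cdot)\|_{L^\infty(\O)}$ for $t\in(0,1]$, uniformly in $s$.

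For the lower bound, I would use the interior Harnack inequality for nonnegative solutions of $Pu=0$, together with a boundary estimate. Fix the ball $B\Subset\O$. First, by the Harnack inequality (cf.\ \cite[Corollary 5.3]{Huska} or a standard parabolic Harnack, which I may cite as Theorem~\ref{thm:Harnack}), one controls $\|u_P(s,\cdot)\|_{L^\infty(\O)}$ by $u_P$ evaluated at a fixed interior point at a slightly earlier time, and conversely $\sup_B u_P(s+t,\cdot)$ is comparable to $\|u_P(s+t,\cdot)\|_{L^\infty(\O)}$ up to a multiplicative constant independent of $s$ (because the zero-order coefficient and the ellipticity constant are uniformly bounded, Harnack constants are uniform in $s$). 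The Harnack chain connecting time $s$ to time $s+t$ through the interior costs a factor that degenerates as $t\to 0$; one checks that the degeneration is controlled by $e^{-Kt^{-1}}$ type factors if one insists on comparing at equal spatial points, which is too weak. To get the $t^\alpha$ rate instead, I would instead invoke the global Hölder estimate for solutions of parabolic equations with bounded measurable coefficients up to the boundary — i.e., the De Giorgi–Nash–Moser / Krylov–Safonov boundary Hölder estimate — applied to the normalized solution $v_s(t,x) := u_P(s+t,x)/\|u_P(s,\cdot)\|_{L^\infty(\O)}$ on the cylinder $(-1,2)\times\O$. Since $v_s(0,\cdot)$ has $L^\infty$ norm exactly $1$ and vanishes on $\partial\O$, and $\O$ is Lipschitz (so the boundary Hölder estimate holds), one obtains a uniform-in-$s$ modulus of continuity: $|v_s(t,x)-v_s(0,x)| \leq C\, t^\alpha$ for all $x$, with $C,\alpha$ depending only on $\O$, the ellipticity constant $\alpha$ in \eqref{hyp:reg}, and $\|c\|_{L^\infty}$ (here one must first note the uniform upper bound $\|v_s\|_{L^\infty((-1,2)\times\O)}\leq C'$ from the previous paragraph's upper estimate, which is what feeds the Hölder estimate). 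Taking $x$ to be (near) a point where $|v_s(0,\cdot)|$ is close to $1$ gives $\|u_P(s+t,\cdot)\|_{L^\infty(\O)}\geq (1-Ct^\alpha)\|u_P(s,\cdot)\|_{L^\infty(\O)}$, hence $\beta(s)-\beta(s+t)\leq -\ln(1-Ct^\alpha)\leq 2Ct^\alpha$ for $t$ small, say $t\in(0,t_0]$.

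Combining the two directions: for $t\in(0,t_0]$ we get $|\beta(s+t)-\beta(s)| \leq \max\{\|c\|_{L^\infty} t,\ 2Ct^\alpha\} \leq C'' t^\alpha$ (shrinking $\alpha\leq 1$ and using $t\leq t_0\leq 1$), uniformly in $s\in\R$. For $t\in(t_0,1)$ one simply chains at most $\lceil 1/t_0\rceil$ estimates of length $\leq t_0$, which changes only the constant; this yields the displayed bound $\sup_{s\in\R,\ t\in(0,1)}|\beta(s+t)-\beta(s)|/t^\alpha<+\infty$, and local Hölder continuity on $\R$ follows immediately. The main obstacle is making sure the Hölder constant in the boundary regularity estimate is genuinely independent of the time-shift $s$; this is where one needs that the structural constants in \eqref{hyp:reg} ($\alpha$, $\|a_{ij}\|_{L^\infty}$, $\|b_i\|_{L^\infty}$, $\|c\|_{L^\infty}$) and the Lipschitz character of $\O$ are all shift-invariant — which they are, since they are taken over all of $\R\times\O$ — so the Krylov–Safonov / De Giorgi estimate applies with one fixed constant on every cylinder $(s-1,s+2)\times\O$. (When $I\neq\R$ and $P$ has been extended by even reflection in $t$, the reflected coefficients still satisfy \eqref{hyp:reg} with the same constants, so the argument is unaffected; cf.\ Remark~\ref{rk:ball}$(\ref{rk:extension})$.)
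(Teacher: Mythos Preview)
Your overall strategy is the same as the paper's: obtain a uniform-in-$s$ H\"older estimate for the normalized function $v_s(t,x)=u_P(s+t,x)/\|u_P(s,\cdot)\|_{L^\infty(\O)}$, then read off the H\"older continuity of $\beta$ from $\beta(s+t)-\beta(s)=\ln\|v_s(t,\cdot)\|_{L^\infty(\O)}$. The paper gets the H\"older estimate via $W^{1,2}_p$ estimates plus Morrey (using the continuity of the $a_{ij}$), whereas you invoke Krylov--Safonov/DGNM directly; either route is fine.

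There is, however, a genuine gap in your argument. To apply the interior-in-time H\"older estimate on the cylinder $(-1,2)\times\O$ and get a constant independent of $s$, you need $\|v_s\|_{L^\infty((-1,2)\times\O)}\leq C'$ uniformly in $s$. You say this follows ``from the previous paragraph's upper estimate'', i.e.\ the comparison bound $\|u_P(s+t,\cdot)\|\leq e^{\|c\|_\infty t}\|u_P(s,\cdot)\|$. But that bound only controls $v_s(t,\cdot)$ for $t\geq0$; for $t\in(-1,0)$ you would need $\|u_P(s+t,\cdot)\|\leq C\|u_P(s,\cdot)\|$, which, rewritten with $s'=s+t$ and $t'=-t\in(0,1)$, is exactly the nontrivial \emph{lower} bound $\|u_P(s'+t',\cdot)\|\geq C^{-1}\|u_P(s',\cdot)\|$ that you are trying to establish. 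So the argument is circular as written.

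The fix is to invoke, as the paper does, the two-sided estimate \cite[Corollary~3.10]{Huska}: there is $C'>1$ such that $(C')^{-1}\|u_P(s,\cdot)\|_{L^\infty(\O)}\leq\|u_P(s+t,\cdot)\|_{L^\infty(\O)}\leq C'\|u_P(s,\cdot)\|_{L^\infty(\O)}$ for all $s\in\R$, $t\in[0,1]$. The first inequality here is precisely the missing ingredient; it relies on a boundary Harnack-type inequality and does not follow from the comparison principle. Once you have this, the uniform bound $\|v_s\|_{L^\infty((-1,2)\times\O)}\leq (C')^3 e^{2\|c\|_\infty}$ is immediate, and the rest of your argument goes through.
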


\begin{proof}
    We start with applying parabolic estimates to the function $u_P$, which, we recall,
    is defined for all $t\in\R$. Namely, there is 
    a constant $C>0$ such that
    $$\forall s\in\R,\quad
    \|u_P\|_{W^{1,2}_{p}((s-1,s+1)\times\O)}\leq C \|u_P\|_{L^{p}((s-2,s+1)\times\O)},$$
 see e.g.\ \cite[Theorem 7.30]{Lie}.
 Then, since $p>N+1$, we get from Morrey's inequality
$$\forall s\in\R,\quad
\|u_P\|_{\mathcal{C}^{0,\alpha}((s-1,s+1)\times\O)}\leq C\|u_P\|_{L^{p}((s-2,s+1)\times\O)},$$
for some $\alpha>0$ and another constant independent of $s$, that we still call $C$.
It follows that
\Fi{uPalpha}
\forall s\in\R,\quad
\|u_P\|_{\mathcal{C}^{0,\alpha}((s-1,s+1)\times\O)}\leq C\|u_P\|_{L^{\infty}((s-2,s+1)\times\O)},
\Ff
for some other $C>0$, depending also on $|\O|$ and $p$.
We make now use of the following two-sided estimate quoted from
\cite[Corollary~3.10]{Huska}: there exists $C'>1$ such that
\Fi{C'<C'}
\forall s\in\R,\ t \in [0,1], \quad \frac{1}{C'} \|u_{P} (s,\cdot)\|_{L^{\infty}(\O)} \leq \|u_{P} (s+t,\cdot)\|_{L^{\infty}(\O)}
\leq C' \|u_{P} (s,\cdot)\|_{L^{\infty}(\O)}.
\Ff
We point out that the second inequality above is a straightforward consequence
of the comparison principle (even with $C'\to1$ as $t\to0^+$), whereas
the first one makes use of a boundary Harnack-type inequality.
Gathering together \eqref{uPalpha}, \eqref{C'<C'} one gets
$$\forall s\in\R,\quad
\|u_P\|_{\mathcal{C}^{0,\alpha}((s-1,s+1)\times\O)}\leq \hat C\|u_P(s,\.)\|_{L^{\infty}(\O)},$$
with $\hat C>0$ independent of $s\in\R$.
Then, since by the definition of $\beta$ it holds that
$$\beta(s+t)-\beta(s)
=\ln\Big(1+\frac{\|u_P(s+t,\.)\|_{L^{\infty}(\O)}-\|u_P(s,\.)\|_{L^{\infty}(\O)}}
{\|u_P(s,\.)\|_{L^{\infty}(\O)}}\Big),$$
one eventually derives, for $s\in\R$ and $|t|^\alpha<1/\hat C$,
$$\ln(1-\hat C |t|^\alpha)\leq
|\beta(s+t)-\beta(s)|
\leq \ln(1+\hat C |t|^\alpha).$$
This immediately yields the desired estimate.
%
\end{proof}

\color{black}

\subsection{Global growth-rate}\label{sec:gr}

The aim of this section is to derive an equivalent of \cite[Lemma 3.2]{NR1}
concerning the notion of the {\em least~mean}, 
introduced by the last two authors of the present paper. 
Let us first remind to the reader what that result is. 
If $g\in L^\infty (\R)$, then the least mean of $g$ is defined by
$$\lm{g}:=\lim_{t\to+\infty}\bigg(\inf_{s\in \R}\,\frac1t\int_s^{s+t} g(\tau)d\tau\bigg).$$
In \cite{NR1}, after showing that the above limit always exists, 
the following  characterization is derived:
\begin{equation} \label{eq:characlm}
\lm{g}=
\sup_{B\in W^{1,\infty}(\R)}\left(\essinf_{\R}(g+B')\right).\end{equation}
This plays a crucial role in the proofs of \cite{NR1}.
In the current paper, we would like to apply
the notion of the least mean to the derivative of the function 
$\beta(t):= \ln(\|u_{P} (t,\cdot)\|_{L^{\infty}(\O)})$,
which is not possible because it is not in $L^\infty$ in general.
For this reason, we will need to rewrite the notion of the least mean
of a function $g$ in terms of its primitive. This leads us to introduce
the notion of the {\em least global growth-rate}, together with the {\em greatest global growth-rate}, that we will apply to the function $\beta$.

\begin{definition}\label{def:lgr}
    Let $I$ be an unbounded open interval and
   let $G:I\to\R$ be a measurable function such that
    \begin{equation}\label{hyp:sublinear}
    \exists C>0,\quad
    \sup_{t_1,t_2\in I} 
    |G(t_1)-G(t_2)|\leq C(1+|t_1-t_2|).
    \end{equation}
    Then the following quantities:
    $$\lgr{G}_I:=\limsup_{t\to+\infty}\bigg(\inf_{s,s+t\in I}
    \,\frac{G(s+t)-G(s)}t\bigg),
    \qquad 
    \ggr{G}_I:=\liminf_{t\to+\infty}\bigg(\sup_{s,s+t\in I}
    \,\frac{G(s+t)-G(s)}t\bigg)$$
    (which exist and are finite) are called the {\em least global growth-rate} 
    and the {\em greatest global growth-rate} of $G$ over $I$ respectively.
    
\end{definition}

Notice that the least growth-rate of a function $G$ coincides with 
the least mean of its derivative $G'$, which is well defined if
$G$ is Lipschitz-continuous. Then, in such a case, 
one could apply the characterization \eqref{eq:characlm}
of \cite{NR1} to $G'$ (see also \cite[Lemma 2.2]{SalakoShen} for some generalizations 
to functions which are not necessarily bounded) and get an analogous characterization for the least global growth-rate of $G$. 
However, in the present paper, the function $G=\beta$ is not Lipschitz-continuous but only H\"older-continuous, and thus we cannot apply \eqref{eq:characlm}.
This is why we need to reformulate and extend the characterization of the
least global growth-rate of a function without passing through its
derivative. We will show by the way that the
``$\,\limsup$'' and ``$\,\liminf$''
in Definition \ref{def:lgr} are actually limits.

\begin{proposition}\label{pro:gr-char}
Let $G:\R\to\R$ be a measurable function
satisfying \eqref{hyp:sublinear}. There holds that
$$\lgr{G}_I
=\sup\left\{\essinf_{I}A'\ :\ A-G\in L^{\infty}(I),\ A'\in L^{\infty}(I)\right\}$$
and 
$$\ggr{G}_I
=\inf\left\{\esssup_{I}A'\ :\ A-G\in L^{\infty}(I),\ A'\in L^{\infty}(I)\right\}.$$
Moreover the ``$\,\limsup$'' and ``$\,\liminf$''
in the definitions of $\lgr{G}_I$ and $\ggr{G}_I$
are actually ``$\,\lim$''.

Finally, one has
	$$\lgr{G}_\R=\min\big\{\lgr{G}_{\R^-},\lgr{G}_{\R^+}\big\},
	\qquad
	\ggr{G}_\R=\max\big\{\ggr{G}_{\R^-},\ggr{G}_{\R^+}\big\}.$$
\end{proposition}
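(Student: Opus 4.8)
The plan is to prove Proposition~\ref{pro:gr-char} in three stages: first the variational characterization of $\lgr{G}_I$ (with the $\ggr{G}_I$ case being entirely symmetric, obtained by replacing $G$ with $-G$), then the upgrade of $\limsup$/$\liminf$ to genuine limits, and finally the min/max relation between the interval $\R$ and the half-lines.

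\textbf{Step 1: the variational formula.} Denote the right-hand side by $S:=\sup\{\essinf_I A' : A-G\in L^\infty(I),\ A'\in L^\infty(I)\}$. For the inequality $\lgr{G}_I\ge S$, I would take any admissible $A$ and set $m:=\essinf_I A'$. Since $A(s+t)-A(s)=\int_s^{s+t}A'\ge mt$ and $|A-G|\le M$ for some constant $M$, we get $G(s+t)-G(s)\ge mt-2M$ for all $s,s+t\in I$; dividing by $t$ and taking $\inf_s$ then $\limsup_{t\to\infty}$ gives $\lgr{G}_I\ge m$, hence $\lgr{G}_I\ge S$. For the reverse inequality $\lgr{G}_I\le S$, I need to construct, given any $\mu<\lgr{G}_I$, an admissible $A$ with $\essinf_I A'\ge\mu$. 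The natural candidate, adapting the construction behind \eqref{eq:characlm} in \cite{NR1}, is something like $A(t):=\inf_{s\in I,\ s\le t}\big(G(s)+\mu(t-s)\big)$ (or its obvious analogue when $I$ is a left half-line), i.e. the largest function with slope $\ge\mu$ lying below $G$, modified appropriately. One checks that $A$ is Lipschitz with $A'\ge\mu$ a.e.\ (as an infimum of affine functions of slope $\mu$, away from where the infimum is attained; more carefully, $A$ is the lower envelope so $A(t_2)-A(t_1)\ge\mu(t_2-t_1)$ directly), that $A\le G$, and that $A-G$ is bounded below: this last point is exactly where $\mu<\lgr{G}_I$ is used, since $\lgr{G}_I>\mu$ means $G(t)-G(s)\ge\mu(t-s)-C_\mu$ for all large $t-s$ and a suitable constant, while \eqref{hyp:sublinear} controls the short-range behavior; together these bound $G(t)-A(t)=\sup_{s\le t}\big(G(t)-G(s)-\mu(t-s)\big)$ from above. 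I expect this boundedness of $A-G$ to be the main technical obstacle — it is the analogue of the subadditivity argument in \cite{NR1} but must be run without a Lipschitz hypothesis on $G$, relying instead on \eqref{hyp:sublinear}.

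\textbf{Step 2: limsup is a limit.} Set $\psi(t):=\inf_{s,s+t\in I}\frac{G(s+t)-G(s)}{t}$ for $t>0$. From Step~1 we know $\limsup_{t\to\infty}\psi(t)=S$, and the $\ge$ direction of Step~1 shows that for any admissible $A$ one has $\liminf_{t\to\infty}\psi(t)\ge\essinf_I A'$, hence $\liminf_{t\to\infty}\psi(t)\ge S$. Combining, $\lim_{t\to\infty}\psi(t)=S=\lgr{G}_I$. The analogous argument with the supremum envelope handles $\ggr{G}_I$.

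\textbf{Step 3: the min/max relation.} For $\lgr{G}_\R$ versus the half-lines: restricting the infimum over $s$ with $s,s+t\in\R$ to those with $s,s+t\in\R^+$ (resp.\ $\R^-$) only makes the quantity larger, so $\lgr{G}_\R\le\min\{\lgr{G}_{\R^-},\lgr{G}_{\R^+}\}$. For the reverse, I would use the variational characterization from Step~1: given admissible functions $A^-$ on $\R^-$ and $A^+$ on $\R^+$ with slopes bounded below by $\lgr{G}_{\R^-}-\varepsilon$ and $\lgr{G}_{\R^+}-\varepsilon$ respectively, one glues them into a single $A$ on $\R$ — after adjusting by additive constants so they match at $0$ (both $A^\pm-G$ are bounded, so $A^\pm(0)$ differ from $G(0)$ by a bounded amount; a constant shift does not affect the derivative), $A$ is admissible on $\R$ with $\essinf_\R A'\ge\min\{\lgr{G}_{\R^-},\lgr{G}_{\R^+}\}-\varepsilon$, and letting $\varepsilon\to0$ gives $\lgr{G}_\R\ge\min\{\lgr{G}_{\R^-},\lgr{G}_{\R^+}\}$. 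The $\ggr{G}$ statement follows by applying this to $-G$. The gluing step is clean precisely because the characterization is phrased in terms of bounded-distance primitives rather than their derivatives, which is the whole point of introducing the global growth-rate in place of the least mean.
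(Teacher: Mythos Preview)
Your overall architecture matches the paper's: the easy inequality in Step~1, the upgrade to a genuine limit in Step~2, and the gluing at $0$ in Step~3 are all carried out there in essentially the same way.

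There is, however, a concrete error in the hard direction of Step~1. The formula $A(t)=\inf_{s\in I,\,s\le t}\big(G(s)+\mu(t-s)\big)$ does \emph{not} produce a function with $A'\ge\mu$; it gives $A'\le\mu$. (Try $G\equiv 0$ on $\R^+$: for $\mu>0$ you get $A\equiv 0$ so $A'=0<\mu$, and for $\mu<0$ you get $A(t)=\mu t$ so $A-G$ is unbounded.) Your verbal description, ``the largest function with slope $\ge\mu$ lying below $G$'', is the right object, but its formula is $A(t)=\inf_{s\ge t}\big(G(s)-\mu(s-t)\big)$, with the infimum looking \emph{forward}. With this correction your plan goes through: one shows $G-A$ is bounded by proving $G(s+\tau)-G(s)\ge\mu\tau-C_\mu$ for \emph{all} $\tau\ge0$, which follows by choosing one $T$ with $\inf_s(G(s+T)-G(s))/T>\mu$, telescoping over multiples of $T$, and using \eqref{hyp:sublinear} for the remainder.

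The paper sidesteps the envelope construction entirely. Having fixed such a $T$, it simply takes $A$ to be the piecewise-linear interpolant of $G$ on the grid $T\N$: then $A'=(G(kT)-G((k-1)T))/T>\mu$ on each piece, and $|A-G|$ is controlled by the oscillation of $G$ on intervals of length $\le T$, which \eqref{hyp:sublinear} bounds directly. This is more elementary and avoids the telescoping argument; your (corrected) envelope construction is more canonical and has the minor advantage of treating $I=\R$ on the same footing as the half-lines, whereas the paper first does $\R^\pm$ and then deduces the $\R$ characterization from the min/max relation.
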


\begin{proof}
We prove the statements about the least global growth-rate.
Applying them to the function $-G$ one gets the results
for the greatest global growth-rate.

We first assume that $I\neq\R$. Since all the quantities involved in the statement are invariant by reflection of the function $G$, we can assume that $I=\R^+$. 

Take $m<\lgr{G}_{\R^+}$. By the definition of the least global growth-rate,
there exists $T>0$ such that 
$$m<\inf_{s\geq 0} \frac{1}{T}(G(s+T)-G(s)).$$

We now consider a linear interpolation of the function $G$.
Namely, for $t\geq0$, we define
$$
\forall k\in\N, \ t\in [(k-1)T,kT),\qquad
\alpha(t):= \frac{G(kT)-G((k-1)T)}T,$$
then we set
$$A(t):=G(0)+\int_0^t \alpha(s)ds.$$
%
%
%
The function $A$ coincides with $G$ over the set $T\N$.
Moreover, outside such a set, it holds $A'=\alpha$,
hence by \eqref{hyp:sublinear} there exists $C>0$
such that $|A'|=|\alpha|\leq C(1+1/T)$.
As a consequence, using again~\eqref{hyp:sublinear}, one finds,
for any $k\in\N$ and $t\in [(k-1)T,kT)$,
$$|A(t)-G(t)|\leq |A(t)-A((k-1)T)|+| G(t)-G((k-1)T)|\leq 2C(T+1),$$
which shows that $A-G$ is bounded over $\R^+$. 
We further know that $A'=\alpha>m$ outside $T\N$. 
We have thus proved that the inequality
$$ \sup\left\{\essinf_{\R^+}A'\ :\ A-G\in L^{\infty}(\R^+),\ A'\in L^{\infty}(\R^+)\right\}\geq m$$
holds for any $m<\lgr{G}_{\R^+}$, hence it holds true for $m=\lgr{G}_{\R^+}$.

In order to show the reverse inequality, 
consider an arbitrary unbounded interval $I$ and let $A$ be 
such that $A-G\in L^\infty (I)$ and $A'\in L^\infty (I)$. Then, for all $s\in I$ and $t> 0$ such that $s+t\in I$, one has
\[\begin{split}
G(s+t)-G(s) &\geq 
G(s+t)-A(s+t)-G(s)+A(s)+\int_s^{s+t}A'\\
&\geq -2\|A-G\|_{L^\infty(I)}+ t\,\essinf_{I}A',
\end{split}\]
whence
\Fi{lgr>}
\lgr{G}_I\geq\liminf_{t\to+\infty}\bigg(\inf_{s,s+t\in I}
    \,\frac{G(s+t)-G(s)}t\bigg)\geq \essinf_{I}A'.
    \Ff
Applying this lower bound with $I=\R^+$ one deduces at once 
the characterization for $\lgr{G}_{\R^+}$ ad also that
the ``$\,\limsup$''in the definitions of $\lgr{G}_I$ is actually a ``$\,\lim$''.

Let us now show the equivalence 
\Fi{lgr+-}
\lgr{G}_\R=\min\big\{\lgr{G}_{\R^-},\lgr{G}_{\R^+}\big\}.
\Ff
The inequality ``$\leq$'' in \eqref{lgr+-}
is a direct consequence of the definition of the involved quantities.
For the reverse one, consider any pair of functions 
$A_+$, $A_-$ satisfying $A_\pm-G\in L^{\infty}(\R^\pm)$ and $A_\pm'\in L^{\infty}(\R^\pm)$.
We then define
	\begin{equation} \label{eq:A}A(t):=\begin{cases}
	A_-(t) & \text{if }t\leq0\\
	A_+(t)-A_+(0)+A_-(0) & \text{if }t>0.
	\end{cases}\end{equation}
 This is a Lipschitz continuous function satisfying 
 $A-G\in L^{\infty}(\R)$, hence using \eqref{lgr>} with $I=\R$ gives
$$\lgr{G}_\R\geq \essinf_{\R}A'=\min\big\{\essinf_{\R^-}A_-'\,,\,\essinf_{\R^+}A_+'\big\}.$$
Taking the supremum with respect to the functions $A_-$, $A_+$ yields 
$\lgr{G}_\R\geq\min\{\lgr{G}_{\R^-},\lgr{G}_{\R^-}\}$. This proves~\eqref{lgr+-}.

It remains to prove the characterization for $\lgr{G}_\R$.
We already have one inequality, cf.~\eqref{lgr>}. Let us show  the opposite one. 
By the characterization for $\lgr{G}_{\R^\pm}$, for any $\e>0$
there exist two functions 
$A_\pm$ satisfying $A_\pm-G\in L^{\infty}(\R^\pm)$ and $A_\pm'\in L^{\infty}(\R^\pm)$, such that:
$$\lgr{G}_{\R^\pm}\leq \e+\essinf_{\R^\pm}A'_\pm.$$
Defining $A$ as in \eqref{eq:A}, one then gets from~\eqref{lgr+-} 
$$\lgr{G}_\R=\min\{\lgr{G}_{\R^-},\lgr{G}_{\R^-}\}\leq \e+\essinf_{\R}A',$$
whence
$$\lgr{G}_\R\leq \sup\left\{\essinf_{\R}A'\ :\ A-G\in L^{\infty}(\R),\ A'\in L^{\infty}(\R)\right\}
+\e.$$
Since $\e>0$ was arbitrary, we have obtained the desired upper bound for
$\lgr{G}_\R$, which concludes the proof.
\end{proof}


\section{Connections with Floquet bundles}
\label{sec:proofs}


All of our results concerning the relation between generalized principal eigenvalues and Floquet bundles
can be reformulated in terms of suitable averages of the growth 
rate of the function~$\beta$ defined by~\eqref{beta}.
In particular, Theorem~\ref{thm:caracmu} involves the notion of 
least and greatest global growth-rates introduced in Section~\ref{sec:gr}.
Another essential ingredient is the following Harnack-type inequality for quotients
of positive solutions, quoted from~\cite{Huska}.

\begin{theorem}[{\cite[Theorem 2.1]{Huska}}]\label{thm:Harnack}
Let $u_1,u_2$ be two positive solutions of \eqref{Pu=0} for $t>0$.
Then, for any $s_0>0$, it holds 
$$\forall s\geq s_0,\quad
\sup_{x\in \O}\frac{u_2(s,x)}{u_1(s,x)}\leq
C\inf_{x\in \O}\frac{u_2(s,x)}{u_1(s,x)},$$
where $C>0$ only depends on $s_0$, $N$, $\O$, the ellipticity constant $\alpha$ of $(a_{ij})$
and the $L^\infty$ bounds of the coefficients of the operator.
\end{theorem}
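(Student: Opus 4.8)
The plan is to treat this as a parabolic boundary Harnack (Carleson–Harnack) estimate, exploiting the fact that $u_1$ and $u_2$ vanish on the \emph{entire} lateral boundary $(0,+\infty)\times\partial\O$, not merely on a portion of it. The key reduction is to show that every positive solution of \eqref{Pu=0} on $(0,+\infty)\times\O$ has, at each time $s\ge s_0$, a spatial profile comparable to one fixed function. Precisely, I would fix a function $\Phi\in\mc{C}(\ol\O)$ with $\Phi>0$ in $\O$ and $\Phi=0$ on $\partial\O$ — say the solution of $-\Delta\Phi=1$ in $\O$, $\Phi=0$ on $\partial\O$ — and aim to produce constants $0<c\le C$, depending only on $s_0$, $N$, $\O$, $\alpha$ and the $L^\infty$ bounds of the coefficients, such that every positive solution $u$ satisfies
\[
c\,\Phi(x)\ \le\ \frac{u(s,x)}{\|u(s,\cdot)\|_{L^{\infty}(\O)}}\ \le\ C\,\Phi(x),\qquad \forall\,s\ge s_0,\ x\in\O.
\]
Granting this, dividing the estimate for $u_2$ by the one for $u_1$ gives immediately $\sup_{\O}\big(u_2(s,\cdot)/u_1(s,\cdot)\big)\le (C/c)^2\,\inf_{\O}\big(u_2(s,\cdot)/u_1(s,\cdot)\big)$, which is the asserted inequality with constant $(C/c)^2$.

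For the interior part of this profile bound I would use the classical parabolic Harnack inequality (Krylov–Safonov, valid for non-divergence operators with bounded measurable coefficients). After conjugating by $e^{-\|c\|_{L^{\infty}}t}$ one may assume the zeroth-order coefficient is $\le 0$, so the comparison principle applies. Fixing an interior point $x_0$ and a compact set $K\Subset\O$, a Harnack chain on the cylinder $(s_0/2,s)\times\O$ — which has parabolic length $\ge s_0/2$ because $s\ge s_0$ — bounds $u(s,\cdot)$ above and below on $K$ by fixed multiples of $u(s-s_0/4,x_0)$; combined with the maximum principle this also shows that $\|u(s,\cdot)\|_{L^{\infty}(\O)}$ is comparable to $u(s-s_0/4,x_0)$. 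Hence on any fixed compact $K$ the quotient $u(s,x)/\|u(s,\cdot)\|_{L^{\infty}(\O)}$ lies between two positive constants, uniformly in $s\ge s_0$ and in the solution $u$, which is the desired bound away from $\partial\O$.

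The substantive part concerns the behaviour as $x\to\partial\O$. Here I would invoke the boundary Harnack principle in bounded Lipschitz domains for the \emph{family} of elliptic operators $-a_{ij}(t,\cdot)\partial_{ij}-b_i(t,\cdot)\partial_i$: this principle holds with a constant depending only on the Lipschitz character of $\O$, on $\alpha$ and on the $L^\infty$ norms of the coefficients, hence uniformly in $t$. Applied at each time $s$, it controls the rate at which $u(s,\cdot)$ vanishes at $\partial\O$ relative to the reference profile $\Phi$. To legitimately feed a ``single-time'' elliptic statement into the parabolic problem, I would pair it with the two-sided comparison of $\|u(\cdot,\cdot)\|_{L^{\infty}(\O)}$ over time intervals of bounded length — a consequence of the comparison principle together with a boundary Harnack inequality, exactly the bound recorded in \eqref{C'<C'} — which links the value of $u$ at time $s$ with its values at nearby times. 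Patching the interior estimate with the boundary estimate then yields the profile bound, and hence the theorem.

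I expect the boundary step to be the main obstacle. Because $\O$ is only Lipschitz, Hopf's lemma is unavailable and $u(s,\cdot)$ need not be comparable to $\dist(\cdot,\partial\O)$; the rate of vanishing at the boundary is genuinely governed by harmonic-measure-type quantities, so the real content of the proof is the elliptic boundary Harnack principle in Lipschitz domains, applied uniformly over the time-dependent family of operators, together with sufficient parabolic regularity to upgrade it to the parabolic comparison of quotients stated here. This is precisely the route carried out in \cite[Theorem~2.1]{Huska}, to which we refer for the details.
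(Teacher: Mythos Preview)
The paper does not prove this theorem; it is quoted verbatim from \cite[Theorem~2.1]{Huska} and used as a black box throughout. So there is no ``paper's own proof'' to compare against, and your final sentence, deferring to \cite{Huska}, is in fact the only thing the paper does.

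That said, your sketch contains a genuine gap at the boundary step. You propose to invoke the \emph{elliptic} boundary Harnack principle for the operators $-a_{ij}(s,\cdot)\partial_{ij}-b_i(s,\cdot)\partial_i$ at each fixed time $s$. But $u(s,\cdot)$ is not a solution of any such elliptic equation: the term $\partial_t u(s,\cdot)$ does not vanish, and you have no control on it near $\partial\O$ that would let you absorb it as a right-hand side. The elliptic boundary Harnack principle compares two positive solutions of the \emph{same} elliptic equation vanishing on a boundary portion; a time-slice of a parabolic solution does not qualify. Pairing this with the two-sided $L^\infty$ comparison \eqref{C'<C'} does not repair the issue, since that estimate says nothing about the boundary rate of vanishing.

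What is actually required, and what \cite{Huska} uses, is the \emph{parabolic} boundary Harnack principle of Fabes--Safonov--Yuan \cite{bHarnack} (cf.\ \cite[Theorem~3.5]{Huska}), which directly compares two positive parabolic solutions vanishing on a lateral boundary portion of a Lipschitz cylinder. This is a genuinely parabolic statement, not reducible to a family of elliptic ones, and it is the real engine behind the quotient Harnack inequality asserted here. Your overall architecture---reduce to a uniform profile bound $u(s,x)\asymp \|u(s,\cdot)\|_{L^\infty(\O)}\Phi(x)$, handle the interior by Krylov--Safonov chains, then treat the boundary separately---is sound, but the boundary input must be the parabolic result from \cite{bHarnack}, not its elliptic analogue.
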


We point out that, unlike the standard parabolic Harnack inequality, 
the $\sup$ and $\inf$ in the above estimate are taken {\em at the same time }$s$.

We are now in a position to prove our main results.

\begin{proof}[Proof of Theorem \ref{thm:caracmu}]
        Lemma \ref{lem:beta-Holder}
        implies that the function $\beta$ is uniformly continuous, 
        hence it fulfills~\eqref{hyp:sublinear}. 
        We can then consider its least and greatest global growth-rates 
        given by Definition \ref{def:lgr}.
        The desired 
	equivalences then rewrites as
	$$\mu_{b,p}(I)=-\lgr{\beta}_I,\qquad
          \lambda_{b,p}(I)=-\ggr{\beta}_I.$$
\quad {\em The inequalities $\mu_{b,p}(I)\leq-\lgr{\beta}_I$ and 
$\lambda_{b,p}(I)\geq-\ggr{\beta}_I$.}\\
 Consider an arbitrary $\lambda>-\lgr{\beta}_I$.
	Te characterization of $\lgr{\beta}_I$ given by 
 Proposition \ref{pro:gr-char} provides us with a function $A$
 	satisfying 
        $A'\in L^{\infty}(I)$ and $A-\beta\in L^{\infty}(I)$,
        such that
 $A'>-\lambda$ a.e.~in $I$. Define the function
	$$\phi(t,x):=u_P(t,x)e^{-A(t)}.$$
	This function satisfies 
	$$P\phi=-A'\phi<\lambda\phi\quad\text{ in }I\times\O.$$
	Observe that
        $$\|\phi(t,\.)\|_{L^\infty(\O)}=e^{\beta(t)-A(t)},$$ 
        which is bounded for $t\in I$.
        We claim that $\phi$ is also bounded from below away from $0$
        on any fixed open ball $B\Subset \O$.
        The boundary Harnack inequality\footnote{ 
        \ Deduced from the localized one of \cite{bHarnack} (see also \cite[Theorem 3.5]{Huska})
        and extended to the whole $\O$ using a covering argument
        and the standard interior Harnack inequality.}
        yields the existence of a constant $\hat C>0$ such~that
$$
\forall  t\in I, \quad \|u_{P} (t-1,\cdot)\|_{L^{\infty}(\O)} \leq 
\hat C\inf_{x\in B} u_{P} (t,x),
$$
	and we know from (\ref{C'<C'}) that the left hand-side is bounded from below by 
	$\frac{1}{C'}  \|u_{P} (t,\cdot)\|_{L^{\infty}(\O)}$, for 
 some other constant $C'>0$.
 Hence, 
there exists a positive constant $C$ such that
	$$\forall  t\in I, \quad 
	\inf_{x\in B}\phi(t,x)= \frac{\inf_{x\in B} u_{P} (t,x)}{\|u_{P} (t,\cdot)\|_{L^{\infty}(\O)}}e^{\beta(t)-A(t)}
 	\geq C e^{\beta(t)-A(t)}.$$
	We have shown that the bounded function $\phi$ is also
bounded from below away from~$0$ on $B$, hence it 
 can be used in the definition of $\mu_{b,p}(I)$.
	We deduce that $\mu_{b,p}(I)\leq\lambda$, and this
	being true for any $\lambda>-\lgr{\beta}_I$ shows
	$\mu_{b,p}(I)\leq-\lgr{\beta}_I$.

 Proceeding exactly in the same way, but starting from 
 $\lambda<-\ggr{\beta}_I$ and then applying Proposition~\ref{pro:gr-char} to get 
 a function $A$ satisfying $A'<-\lambda$, one derives the
 inequality $\lambda_{b,p}(I)\geq-\ggr{\beta}_I$.

\medskip

\quad {\em The inequalities $\mu_{b,p}(I)\geq-\lgr{\beta}_I$ and 
$\lambda_{b,p}(I)\leq-\ggr{\beta}_I$.}\\
 Take $\lambda>\mu_{b,p}(I)$. Then 
        there exists $\phi$ such~that
        $$\phi>0\quad\text{and}\quad
	P\phi\leq\lambda\phi\quad\text{in }I\times\O,$$
	and moreover
 $$\phi=0\text{ on }I\times\partial\O,\qquad
	\inf_{t\in I,\ x\in B}\phi(t,x)\geq \frac1K \quad\text{and}\quad
 \sup_{t\in I,\ x\in \O}\phi(t,x)\leq K,$$ 
 for some open ball $B\Subset \O$ and some $K>0$.
Fix $s\in I$. 
	Consider the solution $u$ of the problem $Pu=0$ on $(s,+\infty)\times\O$
	under Dirichlet boundary condition and with initial datum 
	$u(s,\.)=\phi(s,\.)$ (as usual, in the case $I=\R^-$, 
 $P$ is extended to all times
 by even reflection).
	Since $\phi(s+t,x)e^{-\lambda t}$ and $Ke^{\norma{c}t}$
 are respectively a subsolution and a supersolution to such a problem,
 it follows from the comparison principle that 
	\Fi{<psi<}
 \forall t\geq0,\quad
	\frac1K e^{-\lambda t}\leq\|u(s+t,\.)\|_{L^{\infty}(\O)}\leq
 Ke^{\norma{c}t}.
	\Ff
        Next, applying the Harnack-type inequality provided by
        Theorem~\ref{thm:Harnack}, we find 
        a constant $C>0$, independent of $s$, such that 
$$m_s:=\sup_{x\in \O}\frac{u(s+1,x)}{u_P(s+1,x)}\leq
C\inf_{x\in \O}\frac{u(s+1,x)}{u_P(s+1,x)}\leq
\frac{C Ke^{\norma{c}}}{\|u_P(s+1,\.)\|_{L^{\infty}(\O)}}.$$
Thus, always by comparison, $m_s u_P(s+t,x)\geq u(s+t,x)$
for $t\geq1$, $x\in\O$, whence using \eqref{<psi<}
$$\forall t\geq1,\quad
\|u_P(s+t,\.)\|_{L^{\infty}(\O)}\geq \frac{1}{m_s}\|u(s+t,\.)\|_{L^{\infty}(\O)}\geq
\frac{e^{-\lambda t}}{CK^2e^{\norma{c}}}
\,\|u_P(s+1,\.)\|_{L^{\infty}(\O)}.$$
This inequality rewrites in terms of $\beta$ as
$$\beta(s+t)-\beta(s+1)=\ln\frac{\|u_P(s+t,\.)\|_{L^{\infty}(\O)}}
{\|u_P(s+1,\.)\|_{L^{\infty}(\O)}} \geq 
-\lambda t-\ln(CK^2e^{\norma{c}}),$$
where, we recall, $C,K$ are independent of $s$. 
Owing to Lemma \ref{lem:beta-Holder}, there exists $C'$ independent of
$s$ such that $|\beta(s+1)-\beta(s)|\leq C'$.
As a consequence, we have
$$\inf_{s\in I}\frac{\beta(s+t)-\beta(s)}t\geq 
-\lambda-\frac1t \ln(CK^2e^{\norma{c}})-\frac{C'}t\to-\lambda\qquad\text{as }\;t\to+\infty.$$
One deduces $\lgr{\beta}_I\geq-\lambda$, which, 
being true for any $\lambda>\mu_{b,p}(I)$, implies that
$\mu_{b,p}(I)\geq-\lgr{\beta}_I$.

The proof of $\lambda_{b,p}(I)\leq-\ggr{\beta}_I$
is similar. 
 Take $\lambda<\lambda_{b,p}(I)$. Then 
        there exists $\phi$ such~that
	$$\phi>0\quad\text{and}\quad
	P\phi\geq\lambda\phi\quad\text{in }I\times\O,$$
	and moreover
 $$	\inf_{t\in I,\ x\in B}\phi(t,x)\geq \frac 1K \quad\text{and}\quad
 \sup_{t\in I,\ x\in \O}\phi(t,x)\leq K,$$ 
 for some open ball $B\Subset \O$ and some $K>0$.
The solution to $Pu=0$ on $(s,+\infty)\times\O$
	under Dirichlet boundary condition and with initial datum 
	$u(s,\.)=\phi(s,\.)$ satisfies
	\Fi{>psi>}
 \forall t\geq0,\ x\in\O,\quad
	u(s+t,x)\leq \phi(s+t,x)e^{-\lambda t}\leq
 K e^{-\lambda t}.
	\Ff
 Moreover, since $\phi(s,x)\geq 1/K$ for $x\in B$, it is easily seen (for instance arguing by contradiction)
 that there exists another constant $K'>0$
 independent of $s$ such that $u(s+1,x)\geq K'$ for $x\in B$.
        Thus, by
        Theorem~\ref{thm:Harnack}, there exists
        a constant $C>0$, independent of $s$, such that 
$$M_s:=\sup_{x\in \O}\frac{u_P(s+1,x)}{u(s+1,x)}\leq
C\inf_{x\in \O}\frac{u_P(s+1,x)}{u(s+1,x)}\leq
\frac{C}{K'}\|u_P(s+1,\.)\|_{L^{\infty}(\O)}.$$
By comparison and using \eqref{>psi>} we get
$$\forall t\geq1,\ x\in\O,\quad 
u_P(s+t,x)\leq M_su(s+t,x)\leq M_s K e^{-\lambda t}
\leq \frac{C}{K'K}\|u_P(s+1,\.)\|_{L^{\infty}(\O)}e^{-\lambda t},$$
whence
$$\forall t\geq1,\ \quad 
\beta(s+t)-\beta(s+1)\leq -\lambda t-\ln(CK/K').$$
This entails that $\ggr{\beta}_I\leq-\lambda$, and finally $\lambda_{b,p}(I)\leq-\ggr{\beta}_I$.
\end{proof}

\begin{remark}\label{rk:relax_bp}
As a matter of fact, we have shown in the above proof that
the conclusion of Theorem~\ref{thm:caracmu} holds true if one requires that the test functions
$\phi$ satisfy $\inf_{I\times B}\phi>0$ for {\em every ball} $B\Subset\O$, hence for every
compact set $K\subset\O$.
One could also check that the proof still works if
instead of requiring the test-functions to satisfy 
$\sup_{ I\times \O}\phi <\infty$ and $\inf_{I\times B}\phi>0$,
one asks the weaker condition
$$\begin{array}{lll}&\displaystyle\liminf_{t\to +\infty}
\left(\inf_{s,s+t\in I}\frac{\ln \|\phi (t+s,\cdot)\|_{L^{\infty}(\O)}- \ln \|\phi (s,\cdot)\|_{L^{\infty}(\O)}}{t}\right)&\geq 0 \\ 
&\\
\Big(\hbox{resp. } &\displaystyle\limsup_{t\to +\infty}
\left(\sup_{s,s+t\in I}\frac{\ln \|\phi (t+s,\cdot)\|_{L^{\infty}(\O)}- \ln \|\phi (s,\cdot)\|_{L^{\infty}(\O)}}{t}\right)&\leq 0 \Big)\\
\end{array}$$
in the definition of $\mu_{b,p}(I)$ (resp.  $\lambda_{b,p}(I)$).
This means that these changes do not alter the definitions of 
$\mu_{b,p}(I)$ and $\lambda_{b,p}(I)$.
\end{remark}

\begin{proof}[Proof of Corollary \ref{cor:caracmu}]
	By Theorem \ref{thm:caracmu}, and with the notation of its proof, one has
 $\mu_{b,p}(I)=-\lgr{\beta}_I$ and $\lambda_{b,p}(I)=-\ggr{\beta}_I$.
       The result then follows from Proposition~\ref{pro:gr-char}.
\end{proof}

\begin{proof}[ Proof of Theorem \ref{thm:caraceta}.]
	{\em The formula for $\lambda_b$.}\\ 
	In terms of the function $\beta$ defined by~\eqref{beta}, we need to show that
	\Fi{lambdab=}
	\lambda_b(\R)=\lambda_b(\R^+)= -\limsup_{t\to +\infty}\frac{\beta(t)}{t}.
	\Ff
	Firstly, let $\lambda\in\R$ be such that there exists a positive, bounded function 
  $\phi$ satisfying $P\phi\geq\lambda\phi$ in $\R^+\times\O$. 
Our aim is to use $\phi$ to control $u_P$ from above.
To do that, we need to show that (a large multiple of) $\phi$ controls $u_P$ at some time.
This would be immediate if we had Hopf's lemma and~$C^1$ regularity of $u_P$,
which is not our case due to the lack of regularity of $\O$ and of the coefficients of~$P$.
To circumvent such difficulty, we consider the solution to $Pu=0$ on $\R^+\times\O$
	under Dirichlet boundary condition and with initial datum 
	$\phi(0,\.)$. By comparison we get
	\Fi{psi<}
 \forall t\geq0,\quad
	\|u(t,\.)\|_{L^\infty(\O)}\leq
 \|\phi\|_{L^\infty(\R^+\times\O)} e^{-\lambda t}.
	\Ff
 Moreover, by Theorem~\ref{thm:Harnack}, there is
        a constant $C>0$ such that 
$$M:=\sup_{x\in \O}\frac{u_P(1,x)}{u(1,x)}\leq
C\inf_{x\in \O}\frac{u_P(1,x)}{u(1,x)}<+\infty.$$
Then, again by comparison, and using \eqref{psi<}, we infer the desired estimate
$$\forall t\geq1,\quad
\|u_P(t,\.)\|_{L^\infty(\O)}\leq M\|u(t,\.)\|_{L^\infty(\O)}
\leq M  \|\phi\|_{L^\infty(\R^+\times\O)} e^{-\lambda t},$$
which in turn yields
	$$\limsup_{t\to +\infty}\frac{{\beta}(t)}t\leq-\lambda.$$
Taking the supremum of $\lambda$ for which a function $\phi$ as above exists 
one gets 
	$$\limsup_{t\to +\infty}\frac{{\beta}(t)}t\leq-\lambda_b(\R^+).$$

	In order to get a lower bound for
 $\lambda_b(\R)$, take $\lambda<\lambda'$ such that
	$$\lambda'>-\liminf_{t\to -\infty}\frac{\beta(t)}{t},\qquad
 \lambda<-\limsup_{t\to +\infty}\frac{\beta(t)}{t}.$$
	Let $\gamma : \R \to \R$ be a smooth, nonincreasing function
	satisfying 
	$$\gamma (t)=\lambda'\ \text{ if }t\leq -1,\qquad
	\gamma (t)=\lambda\ \text{ if }t\geq 1.$$
	Define 
	$$\phi_b (t,x) := u_{P}(t,x) e^{\int_{0}^{t}\gamma (\tau)d\tau}.$$ 
	This function satisfies $P\phi_b = \gamma (t) \phi_b\geq \lambda\phi_b$ in $\R\times \O$. 
	Moreover, by our choices of $\lambda$ and $\lambda'$ we have, one one hand,
 for $t>1$,
	$$\|\phi_b (t,\cdot)\|_{L^{\infty}(\O)} = e^{\beta(t)+{\int_{0}^{1}\gamma(s)ds +
		 \lambda(t-1)}}  \to 0 \quad \hbox{ as } t\to +\infty,$$
   and on the other hand, for $t<-1$,
	$$\|\phi_b (t,\cdot)\|_{L^{\infty}(\O)} = 
 e^{\beta(t)+{\int_{0}^{-1}\gamma(s)ds +
		 \lambda'(t+1)}}  \to 0 \quad \hbox{ as } t\to -\infty.$$
 Thus, $\phi_b$ is bounded. This shows that 
	$\lambda_b(\R)\geq \lambda$ and thus, by the arbitrariness of $\lambda$,
	$$\lambda_b(\R) \geq -\limsup_{t\to +\infty}\frac{\beta(t)}t.$$
	
	Summing up, we have shown that
	$$\lambda_b(\R^+)\leq -\limsup_{t\to +\infty}\frac{\beta(t)}t\leq\lambda_b(\R).$$
	Since the inequality $\lambda_b(\R)\leq\lambda_b(\R^+)$ is an immediate consequence of the 
	definition,~\eqref{lambdab=} follows.
		

 \bigskip
	{\em The formula for $\mu_{p}(\R)$.}\\
 Let $\lambda\in\R$ be such that there exists a positive function 
  $\phi$ vanishing on $\R^+\times\partial\O$ and
  satisfying $P\phi\leq\lambda\phi$ in $\R^+\times\O$ and $\inf_{\R^+\times B}\phi\geq K>0$
  for some open ball $B\Subset \O$. Then, as before, considering 
  the solution to $Pu=0$ on $\R^+\times\O$
	under Dirichlet boundary condition and with initial datum 
	$\phi(0,\.)$, one finds
	$$
 \forall t\geq0,\ x\in\R^N,\quad
	u(t,x)\geq
 \phi(t,x) e^{-\lambda t}.
	$$
 Moreover, by the Harnack-type inequality Theorem~\ref{thm:Harnack}, there is
    $C>0$ such that 
$$m:=\sup_{x\in \O}\frac{u(1,x)}{u_P(1,x)}\leq
C\inf_{x\in \O}\frac{u(1,x)}{u_P(1,x)}<+\infty.$$
Then, by comparison we derive
$$\forall t\geq1,\quad
\|u_P(t,\.)\|_{L^\infty(\O)}\geq\frac1{m}\|u(t,\.)\|_{L^\infty(\O)}
\geq \frac1{m}  \|\phi(t,\.)\|_{L^\infty(\O)} e^{-\lambda t}
\geq \frac K m e^{-\lambda t},$$
whence
	$$\liminf_{t\to +\infty}\frac{{\beta}(t)}t\geq-\lambda.$$
Taking the infimum over $\lambda$ yields
	$$\liminf_{t\to +\infty}\frac{{\beta}(t)}t\geq-\mu_p(\R^+).$$

	Next, the inequality $\mu_{p}(\R)\leq -\liminf_{t\to +\infty}\frac{\beta(t)}t$  
 is obtained in a similar way as the inequality 
 $\lambda_b(\R) \geq -\limsup_{t\to +\infty}\frac{\beta(t)}t$.
 Namely, one considers 
 $$\phi_p (t,x) := u_{P}(t,x) e^{\int_{0}^{t}\gamma (\tau)d\tau},$$ 
 with $\gamma$ nondecreasing and coinciding with $\lambda'$ on $(-\infty,-1]$
 and with $\lambda$ on $[1,+\infty)$, where  
 	$$\lambda'<-\limsup_{t\to -\infty}\frac{\beta(t)}{t},\qquad
  \lambda>-\liminf_{t\to +\infty}\frac{\beta(t)}{t}.$$
 One checks that this choice of $\lambda,\lambda'$ implies $\|\phi_p(t,\.)\|_{L^\infty(\O)}\to+\infty$ as $t\to\pm\infty$,
 and the same is true for $\inf_B\phi_p(t,\.)$, for any given open ball
 $B\Subset \O$, thanks to the boundary Harnack inequality.
 One can then use $\phi_p$ in the definition of $\mu_p(\R)$ and infer that
 $\mu_p(\R)\leq\lambda$.
 Since this is true for any arbitrary
 $\lambda>-\liminf_{t\to +\infty}\frac{\beta(t)}{t}$, the desired inequality follows.
 %
	%
 \end{proof}

\begin{proof}[Proof of Theorem \ref{thm:caraclambda}.]
 Let $\lambda\in\R$ be such that there exists a positive function 
  $\phi$ 
  satisfying $P\phi\geq\lambda\phi$ in $\R^-\times\O$ and $\inf_{\R^-\times B}\phi\geq K>0$
  for some open ball $B\Subset \O$. For $s<0$, let $u^s$ be 
  the solution to $Pu=0$ on $(-s,0]\times\O$
	under Dirichlet boundary condition and with initial datum 
	$u^s(s,\.)=\phi(s,\.)$. 
  One gets by comparison
	\Fi{u^s<}
 \forall t\in[s,0], \ x\in\O,\quad
	u^s(t,x)\leq
 \phi(t,x) e^{-\lambda(t-s)}.
 \Ff
Moreover, using Theorem \ref{thm:Harnack} we infer
the existence of a positive constant $C$ such that
$$\forall s\leq-1,\qquad
M_s:=\sup_{x\in \O}\frac{u_P(s+1,x)}{u^s(s+1,x)}\leq
C\inf_{x\in \O}\frac{u_P(s+1,x)}{u^s(s+1,x)}\leq
\frac{\|u_P(s+1,\.)\|_{L^\infty(\O)}}{\inf_B u^s(s+1,\.)}.$$
The fact that $\inf_B u^s(s,\.)=\inf_B \phi(s,\.)\geq K$ implies that 
$\inf_B u^s(s+1,\.)\geq K'>0$ for some $K'$ independent of $s\leq-1$.
Hence there exists another constant $K''>0$ such that
$$\forall s\leq-1, \ x\in\O,\quad
u_P(s+1,x)\leq K''\|u_P(s+1,\.)\|_{L^\infty(\O)} u^s(s+1,x),$$
from which we deduce, by comparison,
$$\forall s\leq-1, \ t\in[s+1,0],\ x\in\O,\quad
u_P(t,x)\leq K''\|u_P(s+1,\.)\|_{L^\infty(\O)} u^s(t,x).$$
In particular, reclaiming \eqref{u^s<}   we get
$$u_P(0,x)\leq
K''\|u_P(s+1,\.)\|_{L^\infty(\O)}\phi(0,x) e^{\lambda s}.$$
It follows that, for all $s\leq-1$ and for given $x\in\O$,
$$\beta(s+1)\geq-\lambda s+\ln\frac{u_P(0,x)}{K''\phi(0,x)},$$
whence
$$-\limsup_{t\to -\infty}\frac{\beta(t)}{t}
\geq\lambda.$$
Taking the supremum in $\lambda$ we derive the above inequality 
with $\lambda$ replaced by $\lambda_p(\R^-)$.

Similarly, one derives the inequality
$$
-\liminf_{t\to -\infty}\frac{\beta(t)}{t}
\leq\mu_b(\R^-).
$$
Namely, starting from
$\lambda\in\R$ such that there exists a bounded, positive function 
  $\phi$ 
  satisfying $P\phi\leq\lambda\phi$ in $\R^-\times\O$ and vanishing on
  $\R^-\times\partial\O$, one considers, for $s<0$, 
  the solution to the Dirichlet problem $Pu^s=0$ on $(-s,0]\times\O$
	with initial datum 
	$u^s(s,\.)=\phi(s,\.)$. 
  One gets by comparison
$$
 \forall t\in[s,0], \ x\in\O,\quad
	 \phi(t,x) e^{-\lambda(t-s)}\leq u^s(t,x)\leq 
  \|\phi\|_\infty e^{\|c\|_\infty(t-s)}.
$$
Moreover, by Theorem \ref{thm:Harnack}, there exists
$C>0$ such that
$$\forall s\leq-1,\qquad
m_s:=\sup_{x\in \O}\frac{u^s(s+1,x)}{u_P(s+1,x)}\leq
C\inf_{x\in \O}\frac{u^s(s+1,x)}{u_P(s+1,x)}\leq
\frac{C\|\phi\|_\infty e^{\|c\|_\infty}}{\|u_P(s+1,\.)\|_{L^\infty(\O)}}\,.$$
Then, by the comparison principle, we deduce that
$$\forall s\leq-1, \ t\in[s+1,0],\ x\in\O,\quad
u^s(t,x)\leq 
\frac{C\|\phi\|_\infty e^{\|c\|_\infty}}{\|u_P(s+1,\.)\|_{L^\infty(\O)}}
\,u_P(t,x),$$
whence in particular
$$\forall s\leq-1, \ x\in\O,\quad
\phi(0,x) e^{\lambda s}\leq
\frac{C\|\phi\|_\infty e^{\|c\|_\infty}}{\|u_P(s+1,\.)\|_{L^\infty(\O)}}
\,u_P(0,x).$$
Applying this inequality for fixed $x\in\O$, and letting $s\to-\infty$,
one eventually gets
$$-\liminf_{t\to -\infty}\frac{\beta(t)}{t}
\leq\lambda.$$
Taking the infimum in $\lambda$ we derive the above inequality 
with $\lambda$ replaced by $\mu_b(\R^-)$.

Finally, the reverse inequalities for $\lambda_p(\R)$,
$\mu_b(\R)$ are obtained by considering 
the functions $\phi_b$, $\phi_p$ defined in the
proof of Theorem \ref{thm:caraceta}. They are respectively
bounded on $\R\times\O$ and strictly positive on 
$\R\times B$, for $B\Subset\O$, and moreover satisfy on
$\R\times \O$:
$P\phi_b \leq \lambda'\phi_b$ with given
$\lambda'>-\liminf_{t\to -\infty}\frac{\beta(t)}{t}$
and $P\phi_p \geq \lambda'\phi_p$ with given
$\lambda'<-\limsup_{t\to -\infty}\frac{\beta(t)}{t}$,
together with Dirichlet boundary conditions.
This yields the desired inequalities for $\lambda_p(\R)$,
$\mu_b(\R)$.
\end{proof}

\begin{remark}\label{rk:relax}
First, the function $\phi_p$ constructed in the proofs of
Theorems \ref{thm:caraceta}, \ref{thm:caraclambda} fulfills
$\inf_{\R\times K}\phi_p>0$ for every compact set $K\subset\O$.
This shows that the definitions of 
$\lambda_{p}(\R)$, $\mu_{p}(\R)$ do not change if one requires such a 
stronger condition on the test functions.
Second, one could check that the proofs of
Theorems \ref{thm:caraceta}, \ref{thm:caraclambda}
still work if the condition $\sup_{\R\times \O}\phi<\infty$
 is relaxed by
$$\limsup_{t\to +\infty}\frac{1}{t}\ln \|\phi(t,\cdot)\|_{L^\infty (\O)}\leq 0,$$ 
as well as if the condition $\inf_{\R\times B}\phi>0$ 
is relaxed by
$$\liminf_{t\to +\infty}\frac{1}{t}\ln \|\phi(t,\cdot)\|_{L^\infty (\O)}\geq 0.$$ 
As a consequence, these changes do not alter 
the definitions of $\lambda_{b}(\R)$, $\mu_{b}(\R)$  and
$\lambda_{p}(\R)$, $\mu_{p}(\R)$ respectively.
\end{remark}


\section{Completing the proofs}














\subsection{Proof of the comparison results between the generalized principal eigenvalues}

\begin{proof}[Proof of Proposition \ref{pro:basic}.]
	The equality $\lambda_b(\R^-)=+\infty$ simply follows by taking $\phi = e^{\gamma t}$, with $\gamma$ 
	arbitrarily large, in the definition. The identities $\lambda_p(\R^+)=+\infty$ and
$\mu_p(\R^-)=\mu_b(\R^+)=-\infty$ are proved similarly.

Let us prove the inequalities.
The lower bound for $\lambda_{b,p}(I)\geq-\sup c$ 
is simply obtained by taking a test-function $\phi$ which is constant in the definition of $\lambda_{b,p}(I)$.
The inequalities 
$\lambda_{b,p}(I)\leq \lambda_p(I)$, $\lambda_{b,p}(I)\leq\lambda_b(I)$ and
$\mu_b(I)\leq \mu_{b,p}(I)$ and $\mu_p(I)\leq \mu_{b,p}(I)$ are simple
consequences of the definitions. Finally, the inequalities
$\lambda_b(I)\leq\mu_p(I)$ and $\lambda_p(I)\leq\mu_b(I)$ are derived from
Theorems \ref{thm:caraceta} and \ref{thm:caraclambda} respectively.
This proves all the above inequalities.

Let us move on to the monotonicity property with respect to the domain.
Namely, we need to show that if $\O'\subset\O$ is another smooth domain,
then the associated notions of generalized principal eigenvalues in $I\times\O'$
are greater than or equal to the corresponding ones in $I\times\O$. This is straightforward in the case of 
the $\lambda$'s, since one can use the test functions in $I\times\O$ as test functions in $I\times\O'$ (up to choosing the ball $B$ inside $\O'$, which is possible thanks to the 
Remark~\ref{rk:ball}$(\ref{rk:ballB})$).
This cannot be done for the $\mu$'s because test functions must satisfy the
Dirichlet boundary condition. However, one~can argue the other way around:
starting from a subsolution in $I\times\O'$ which vanishes on $I\times\partial\O'$,
one can consider its extension to $0$ outside $I\times\O'$,
which turns out to be a {\em generalized} subsolution~in~$I\times\O$
and can be used to derive the desired inequality.

Let us describe it in detail in the case of the notion $\mu_{b,p}$, 
the other cases being analogous. Let us make the dependence
on the domain by writing $\mu_{b,p}(I\times\O)$ and $\mu_{b,p}(I\times\O')$.
One verifies that in 
the derivation of the inequality $\mu_{b,p}(I\times\O)\geq-\lgr{\beta}_I$
in the proof of Theorem \ref{thm:caracmu}, 
the properties of the test function $\phi$ associated with $\lambda>\mu_{b,p}(I\times\O)$
have only been used to obtain a solution $u$ that satisfies 
the two-sided estimate~\eqref{<psi<}.
As a matter of fact, if~$\phi$ is now associated with $\lambda>\mu_{b,p}(I\times\O')$,
one may extend it by $0$ outside $I\times\O'$, getting
a generalized subsolution 
in $I\times\O$. Thus 
one derives~\eqref{<psi<} exactly as before, using the comparison principle.
From that, continuing the proof,
one eventually infers $\mu_{b,p}(I\times\O')\geq-\lgr{\beta}_I$, where $\beta$ 
corresponds to the function $u_P$ in the domain $\O$ (and not in~$\O'$).
But then we conclude by Theorem \ref{thm:caracmu} that 
$\mu_{b,p}(I\times\O')\geq \mu_{b,p}(I\times\O)$.

It remains to prove the well-posedness of $\mu_{b,p}$, that is, that the set 
in its definition is nonempty. Since $\mu_{b,p}$ is the greatest among all the notions,
thanks to the inequalities showed above, this will imply the finiteness of all the notions.
By monotonicity, it is sufficient to prove it in a subdomain of~$\O$.
Consider the ball $B$ in the definition of $\mu_{b,p}(I\times\O)$; 
let us assume without loss of generality that it is centered at the origin,
and let $r$ be its radius. Then consider a function
$\phi(t,x):=\chi(|x|)$, where $\chi$ is an even, smooth function satisfying
$$\chi>0\text{ \ in }(-r,r),\qquad
\chi(r)=\chi'(r)=0,\qquad\chi''(r)>0.$$
On the one hand, 
by the uniform parabolicity of $P$ and by the boundedness of its coefficients,
the function $\phi$ satisfies $P\phi\geq0$ in some neighborhood $I\times U$
of $I\times\partial B$. On the other hand, $P\phi$ is bounded and one has
$$\inf_{I\times(B\setminus U)}\phi>0,$$ hence one can find $\lambda>0$ such that
$$P\phi\leq \lambda\phi \quad\text{in }I\times(B\setminus U).$$
This gives $\mu_{b,p}(I\times B)\leq\lambda$, which concludes the proof.
\end{proof}


\subsection{Proofs of the applications to the semilinear problem}

\begin{proof}[Proof of Proposition \ref{prop:initdatum}.]
Let $u$ be as in the statement of the proposition.
We start with observing that, as $M$ is a supersolution of \eqref{RD} by \eqref{f<0}, 
it follows from the comparison principle that
$u(t,x)\leq\max\{M,\|u_0\|_\infty\}$ for $t\geq0$, $x\in\O$. 

Assume that $\mu_{b,p}(\R^+)<0$.
        There exists then $\lambda<0$ and $\phi$ such~that	
        $$\phi>0\quad\text{and}\quad
	P\phi\leq\lambda\phi\quad\text{in }\R^+\times\O,$$
	and moreover
 $$\phi=0\text{ on }\R^+\times\partial\O,\qquad
	\inf_{t\in \R^+,\ x\in B}\phi(t,x)\geq \frac1K \quad\text{and}\quad
 \sup_{t\in \R^+,\ x\in \O}\phi(t,x)\leq K,$$ 
 for some open ball $B\Subset \O$ and some $K>0$.
 In order to compare $u$ with a suitable multiple of $\phi$, we 
 preliminarily need to show that the comparison holds at some given time, say $t=1$, that is
 \Fi{phi<v}
 \forall x\in\O,\quad
 m\phi(1,x)\leq u(1,x).
 \Ff
 Since, we cannot directly infer this from the Hopf lemma because of the lack of regularity of $\phi$,
 we will rather employ the Harnack inequality.
 To start with, we write the equation for $u$ in linear form: 
 \Fi{linearized}
 \partial_t u-a_{ij}(t,x)\partial_{ij} u-b_i(t,x)\partial_i u= c(t,x)u,
 \Ff
 with $c(t,x):=f(t,x,u(t,x))/u(t,x)$ if $u(t,x)\neq0$.
 Then we let $v$ be the solution to \eqref{linearized},
	under Dirichlet boundary condition and with initial datum~$\phi(0,\.)$.
On the one hand, the Harnack-type inequality Theorem~\ref{thm:Harnack} yields the existence of a constant
    $C>0$ such that 
$$m':=\sup_{x\in \O}\frac{v(1,x)}{u(1,x)}\leq
C\inf_{x\in \O}\frac{v(1,x)}{u(1,x)}<+\infty.$$
On the other hand, since $u$ vanishes on $\partial\O$ and it 
 is uniformly continuous, by parabolic regularity, there exists a set $K\Subset\O$
 such that $c(t,x)\geq f_s'(t,x,0)+\lambda$ on $\O\setminus K$.    
It follows that $\phi$ is a subsolution to \eqref{linearized} on $\O\setminus K$.
Take $m''\in(0,1)$ small enough so that $v\geq m''\phi$ on $[0,1]\times K$. 
Then the comparison principle yields $v\geq m''\phi$ on $[0,1]\times(\O\setminus K)$,
and thus on $[0,1]\times\O$.
Gathering together the two inequalities obtained above 
we get $m'u(1,x)\geq v(1,x)\geq m''\phi(1,x)$ for $x\in\O$, i.e.~\eqref{phi<v}

 %
%
Now, since $P\phi\leq\lambda\phi$, with $\lambda<0$,
by the regularity of $f$ there exists $\sigma>0$ such that
$f(t,x,s)\geq (f_s(t,x,0)+\lambda) s$ for $(t,x)\in\R^+\times\R^N$ and $s\in[0,\sigma)$.
Therefore,
$m \phi$ is a subsolution to the Dirichlet problem \eqref{RD}, up to decreasing 
$m$ if need be.
We can then
compare $m\phi$ with $u$ and derive
 $$\forall t\geq1,\ x\in\O,\quad
 m \phi(t,x)\leq u(t,x).$$
In particular, one finds $u(t,x)\geq m/K$ for $t>1$ and $x\in B$.
Then the desired lower bound follows from the standard interior Harnack inequality
\cite{KS80}, considering again $u$ as a solution of \eqref{linearized}.

Let us show the reverse implication. The KPP hypothesis 
(\ref{hyp:fKPP}) implies $Pu\leq 0$ on $\R^+\times \O$. 
Suppose that $u$ fulfills the persistence property \eqref{eq:persistence}. 
Then, thanks to the Harnack inequality (for the linear equation \eqref{linearized})
for any set $K\Subset\O$, one can find some $T>0$
such that $\inf_{[T,+\infty)\times K}u>0$.
This immediately gives $\mu_{b,p}(\R^+)\leq0$, owing to~Remark~\ref{rk:ball}$(\ref{rk:IT})$.
However, we need to rule out the possibility $\mu_{b,p}(\R^+)=0$;
this is the only point where the strict inequality in (\ref{hyp:fKPP}) is needed.
To achieve our goal, we perturb $u$ by a positive sub-barrier,
which is not completely standard due to the temporal dependence of the operator.
For given $\e>0$, we call
$$\phi(t,x):=u(t,x)+\e\vp^2(x),$$
where $\vp$ is the solution to 
$$-\Delta \vp=1\quad\text{in }\,\O,\qquad \vp=0\quad\text{on }\,\partial\O.$$
Since $\O$ is of class $C^{2,\alpha}$, it follows from the standard elliptic theory 
(see e.g.~\cite{GT}) that 
$\vp\in C^{2,\alpha}(\ol\O)$ and moreover $\vp$ is positive in $\O$
and satisfies $\partial_\nu\vp>0$ on $\partial\O$
owing to Hopf's maximum principle.
Direct inspection~reveals
$$P(\vp^2)=-2\vp a_{ij}(t,x)\partial_{ij}\vp 
-2a_{ij}(t,x)\partial_i\vp\partial_j\vp-2\vp b_i(t,x)\partial_i \vp- f_{s}'(t,x,0)\vp^2.$$
%
%
%
%
Using the ellipticity of $(a_{ij})$ and the fact that $\vp=0$ and $|\nabla\vp|\neq0$ on $\partial\O$,
one can then find some $h>0$ and $K\Subset \O$ such that $P(\vp^2)<-h$ in $\O\setminus K$.
For $t>0$ and $x\in\O\setminus K$ we find
$$P\phi=Pu+\e P(\vp^2)=f(t,x,u)-f_s'(t,x,0)u+\e P(\vp^2)\leq
-\e h,$$
where we have only used the large inequality in \eqref{hyp:fKPP}.
Recalling that $u$ is bounded, we then have $P\phi\leq\lambda\phi$ in $\R^+\times(\O\setminus K)$,
for some $\lambda'<0$ with $|\lambda'|$ sufficiently small.
Instead, in $K$, we use the stronger hypothesis \eqref{hyp:fKPP}.
The uniform continuity of $u$ and the persistence property
imply that there exist $T,\delta>0$ such that $u>\delta$ on $[T,+\infty)\times K$. 
Then, since $f$ is locally continuous in $s$, uniformly with respect to $t,x$, 
property \eqref{hyp:fKPP} yields
$$\zeta:=\inf_{[T,+\infty)\times K}\big(f_s'(t,x,0)u-f(t,x,u)\big)>0.$$
We then get in $[T,+\infty)\times K$,
$$P\phi\leq -\zeta+\e P(\vp^2),$$
which, for $\e$ small enough, gives $P\phi\leq\lambda''\phi$ for a suitable $\lambda''<0$.
In conclusion, we have shown that for $\e$ small, 
$P\phi\leq\lambda\phi$ in $[T,+\infty)\times \O$
with $\lambda:=\max\{\lambda',\lambda''\}<0$.
This implies that $\mu_{b,p}((T,+\infty))<0$. 
Since $\mu_{b,p}((T,+\infty))=\mu_{b,p}(\R^+)$ 
(cf.~Remark~\ref{rk:ball}$(\ref{rk:IT})$), the proof is concluded.
%
%
%
\end{proof}

\begin{proof}[Proof of Proposition \ref{pro:RD-existence}.]
Suppose that \eqref{RD}
admits a bounded entire solution $u$ satisfying \eqref{infK}.
Then, using exactly the same function $\phi(t,x):=u(t,x)+\e\vp^2(x)$ as in the proof of 
Proposition \ref{prop:initdatum} yields
${\mu_{b,p}(\R)<0}$. 

Suppose now that $\mu_{b,p}(\R)<0$.
There exists then $\lambda<0$, $\phi>0$ in $\R\times\O$ and $B\Subset\O$ satisfying
$$\sup_{\R\times\O}\phi<+\infty,\quad \phi=0\text{ on }\R\times\partial\O,
\quad  \inf_{\R\times B}\phi>0,\quad
P\phi\leq\lambda\phi\text{ in }\R\times\O.$$
The uniform regularity of $f$ implies that 
$f(t,x,m\phi)\geq (f_s(t,x,0)+\lambda)m\phi$ for all $(t,x)\in\R^{N+1}$, provided that
$m>0$ is sufficiently small. We choose $m>0$ small enough in such a way that 
this inequality holds and, in addition, $m\phi\leq M$, where $M$ is from \eqref{f<0}. Hence, 
the function $m\phi$ is a subsolution of~\eqref{RD}
which is smaller than the constant function $M$, the latter being
a supersolution of~\eqref{RD}.
Let~$u_n$ be the solution of~\eqref{RD} in $(-n,+\infty)\times\O$ 
with initial condition $u_n(-n,\.)=M$. 
It follows from the standard comparison principle that $m\phi\leq u_n\leq M$ 
in $(-n,+\infty)\times\O$.
By parabolic estimates up to the boundary, the sequence $\seq{u}$ converges (up to subsequences)
locally uniformly in $\R\times\ol\O$ to a solution $m\phi\leq u\leq M$ of 
\eqref{RD}. One infers in particular that 
that $\inf_{\R\times B}u\geq m\inf_{\R\times B}\phi>0$. 
%
As usual, the interior
Harnack inequality implies that $u$ satisfies also condition \eqref{infK}.
\end{proof}

\begin{remark} 
The reader could easily adapt our proof in order to show that
if one drops condition~(\ref{infK}), then the relevant notion for the
existence result is $\mu_{b}(\R)$ rather than $\mu_{b,p}(\R)$.
Namely, problem \eqref{RD} admits a positive bounded entire solution if $\mu_{b}(\R)<0$ 
		and only if $\mu_{b}(\R)\leq 0$. 
\end{remark}

\begin{proof}[Proof of Theorem \ref{thm:RD-uniqueness}.]
Suppose that the problem \eqref{RD} admits
two bounded ancient solutions~$u$ and~$v$ satisfying \eqref{infK}. 
Let us write the equation for $v$ in linear form, namely
$$\partial_t v-a_{ij}(t,x)\partial_{ij} v-b_i(t,x)\partial_i v= c(t,x)v,$$
 with $c(t,x):=f(t,x,v(t,x))/v(t,x)$. Observe that the function $c$ is bounded, namely,
$$|c|\leq L:=\|f_s'\|_\infty.$$
 Fix $T\leq-1$. Let $\t u$ be the solution of the above equation
for $t\in(T,0]$, $x\in\O$, under Dirichlet 
boundary condition, with initial datum $\t u(T,\.)=u(T,\.)$.
It then follows from the Harnack inequality, Theorem~\ref{thm:Harnack},
that there exists a constant $C>0$ independent of $T$ such that 
$$\forall t\in[T+1,0],\quad
\sup_{x\in \O}\frac{\t u(t,x)}{v(t,x)}\leq
C\inf_{x\in \O}\frac{\t u(t,x)}{v(t,x)}
\leq C\frac{\|\t u(t,\.)\|_\infty}{\|v(t,\.)\|_\infty}.$$
On the one hand, the term $\|v(t,\.)\|_\infty$ is uniformly bounded from below away from zero,
because $v$ satisfies \eqref{infK}.
On the other hand, the function
$\|u\|_\infty e^{L(t-T)}$
is a supersolution of the equation satisfied by $\t u$ and therefore, one deduces 
from the comparison principle that
$\|\t u(t,\.)\|_\infty\leq \|u\|_\infty e^{L(t-T)}$ for $t\in(T,0]$.
As a consequence, there exists another constant $C'$ independent of $T$ such that
$$\sup_{x\in \O}\frac{\t u(T+1,x)}{v(T+1,x)}
\leq C' .$$
Next, we have that the function $\hat u(t,x):=\t u(t,x)e^{2L(t-T)}$ satisfies
$$\partial_t \hat u-a_{ij}(t,x)\partial_{ij} \hat u-b_i(t,x)\partial_i \hat u= 
[c(t,x)+2L]\hat u.$$  
Notice that
$$c(t,x)+2L=\frac{f(t,x,v(t,x))}{v(t,x)}+2L\geq \frac{f(t,x,u(t,x))}{u(t,x)}.$$
The last term is the zero order coefficient of the equation satisfied by $u$,
written in linear form.
Hence, the comparison principle yields $\hat u\geq u$ on $[T,0]\times\O$. 
Gathering together the above estimates, one ends up with
$$\sup_{x\in \O}\frac{u(T+1,x)}{v(T+1,x)}\leq
\sup_{x\in \O}\frac{\t u(T+1,x)}{v(T+1,x)}\, e^{2L}
\leq C'e^{2L}.$$
We emphasize that this estimate holds for all $T\leq-1$, with $C'$ independent of $T$.
We can then define
$$\bar k:=\inf\{h>0\,:\, kv>u\ \text{ in } \R^-\times\O\}.$$
There holds that $\bar k>0$ and that the function $w:= \bar kv-u$ is nonnegative.

We want to show that $\bar k\leq1$. Assume by way of contradiction that 
this is not the case.
Using the hypothesis \eqref{fconcave} we get
$$\partial_tw-a_{ij}(t,x)\partial_{ij}w-b_i(t,x)\partial_iw=\bar k f(t,x,v)-f(t,x,u)
> f(t,x,\bar kv)-f(t,x,u).$$
It follows from the strong maximum principle that $w$ is a positive supersolution
of the equation
$$\partial_t\t w-a_{ij}(t,x)\partial_{ij}\t w-b_i(t,x)\partial_i\t w=\bar c(t,x)\t w,$$
with $\bar c(t,x):=(f(t,x,\bar kv)-f(t,x,u))/w(t,x)$,
which is bounded.
We now fix $T\leq-1$ and
consider the solutions $\t u$ and $\t w$ of the above linear equation on $(T,0]\times\O$,
under Dirichlet boundary condition, and with initial data $\t u(T,\.)=u(T,\.)$ and $\t w(T,\.)=w(T,\.)$.
Repeating the arguments of the first part of the proof, we end up with the following estimate:
$$\sup_{x\in \O}\frac{u(T+1,x)}{\t w(T+1,x)}\leq
\sup_{x\in \O}\frac{\t u(T+1,x)}{\t w(T+1,x)}\, e^{2L}\leq
 C\frac{\|u\|_\infty}{\|\t w(T+1,\.)\|_\infty}\, e^{3L}.$$
Then, as the parabolic comparison principle yields $w\geq \t w$ on $(T,0]\times\O$, 
one derives
$$\sup_{x\in \O}\frac{u(T+1,x)}{w(T+1,x)}\leq
 C\frac{\|u\|_\infty}{\|\t w(T+1,\.)\|_\infty}\, e^{3L}.$$
 
Suppose for a moment that $w$ satisfies \eqref{infK}.
Then, it is easily seen, for instance by contradiction, 
that there exists a constant $C'$ independent of $T$ such that
$$
\forall K\Subset\O,\quad\min_{K}\t w(T+1,\.)\geq C'.$$
As a consequence, there exists $C''>0$ such that
$$\sup_{\R^-\times\O}\frac{u}{w}\leq C'',$$
which, recalling the definition of $w$, rewrites as
$\bar kv-u\geq u/C''$, that is, $\bar kv\geq (1+1/C'')u$.
This contradicts the definition of $\bar k$.


We have therefore shown that $w$ cannot fulfill \eqref{infK}, thus 
one can find a compact set $K\subset\O$ and
a sequence $((t_n,x_n))_{n\in\N}$ in $\R^-\times K$
such that
\Fi{snyn}
w(t_n,x_n)\to0,\quad x_n\to z\in K\qquad\text{as }n\to+\infty.
\Ff 
%
In order to get a contradiction, we cannot directly pass to the limit in the equation
satisfied by
$w(\.+t_n,\.)$, because we do not have enough regularity on the coefficients.
To circumvent this difficulty, 
we apply once again the interior Harnack inequality and infer that, for a given 
open ball $B\Subset\O$
containing the point $z$, there exists a constant $C>0$ such that
$$\sup_{[t_n-3,t_n-1]\times B}w\to0\quad\text{as }\;n\to+\infty.$$
Then, we use the interior parabolic estimates (see e.g.\ \cite[Theorem 7.22]{Lie},
this is where the uniform continuity of the $a_{ij}$ is required)
and infer that 
$$\|w\|_{W^{1,2}_{p}([t_n-2,t_n-1]\times B')}\to0\quad\text{as }\;n\to+\infty,$$
for any given ball $B'\Subset B$.
Thus, integrating the equation satisfied by $w$ yields
$$\limn\int_{[t_n-2,t_n-1]\times B'}\big(\bar k f(t,x,v)-f(t,x,u)\big)
=0.$$ 
Recalling the definition of the function $\bar c$, 
we rewrite the term inside the above integral as
$$\bar k f(t,x,v)-f(t,x,u)=
\bar k f(t,x,v)-f(t,x,\bar kv)+\bar c(t,x)w(t,x),$$
and we find
$$\limn\int_{[t_n-2,t_n-1]\times B'}\big(\bar k f(t,x,v)-f(t,x,\bar kv)\big)
=0.$$ 
Recall however that $\inf_{\R^-\times B'}v>0$ because $v$ fulfills \eqref{infK},
and moreover it holds that
$$\bar k f(t,x,v)-f(t,x,\bar kv)=
\bar k v\left(\frac{f(t,x,v)}{v}-\frac{f(t,x,\bar kv)}{\bar k v}\right),$$
which is then bounded from below by a positive constant for $t\in\R^-$ and $x\in B'$,
thanks to the hypothesis~\eqref{fconcave} and the continuity of 
$s\mapsto f(t,x,s)$, which is uniform with respect to $t,x$.
We have reached a contradiction.

In conclusion, we have shown that $\bar k\leq1$, that is, $u\leq v$.
%
%
Switching the roles of $u$ and $v$ we derive $v\leq u$,
concluding the proof of the theorem.
\end{proof}


\subsection{Proofs of the applications to the linear problem}

\begin{proof}[Proof of Proposition \ref{prop:carac}.]
Let us call for short $u(t,x):=u(t,x;u_0)$.
By Theorem~\ref{thm:Huska} we know that there exist
$C,\gamma>0$ and $q\in\R$ such that
$$q - C\|u_{0}-q u_{P}(0,\.)\|_\infty\,e^{-\gamma t}\leq
\frac{\|u(t,\.)\|_\infty}{\| u_P(t,\.)\|_\infty}
\leq q + C\|u_{0}-q u_{P}(0,\.)\|_\infty
\,e^{-\gamma t}.$$
We actually know from the last assertion of the theorem that $q>0$, because $u(t,x)>0$ for $t>0$, $x\in\O$ 
due to the strong maximum principle.
As a consequence, we derive
$$\lim_{t\to+\infty}\frac1t\big(\ln\|u(t,\.)\|_\infty
-\ln\|u_P(t,\.)\|_\infty\big)=0.$$ 
We then deduce from Theorem \ref{thm:caraceta} that 
$$
\liminf_{t\to +\infty}\frac{\ln \|u(t,\.)\|_\infty}{t}=-\mu_{p}(\R^+),\qquad
\limsup_{t\to +\infty}\frac{\ln \|u(t,\.)\|_\infty}{t} =-\lambda_b(\R^+).$$
We then conclude using the boundary Harnack inequality, which yields,
 for given $x\in\O$,  the existence of a constant $C'>0$ such that
$$\forall t>2,\quad
\frac1{C'}
\|u(t-1,\cdot)\|_\infty\leq u(t,x)\leq \|u(t,\cdot)\|_\infty.$$
\end{proof}

\begin{proof}[Proof of Proposition \ref{prop:u^s}.]
We write for short $u^s(t,x)$ in place of $u^s(t,x;u_0)$.
By Theorem \ref{thm:Harnack} there exists a positive constant $C$ such that,
for any $s\in\R$ and $x\in\O$, there holds
$$
\frac1C\frac{\|u^s(s+1,\.)\|_\infty}{\| u_P(s+1,\.)\|_\infty}
\leq
\frac1C \sup_{\O}\frac{u^s(s+1,\.)}{u_P(s+1,\.)}
\leq\frac{u^s(s+1,x)}{u_P(s+1,x)}\leq C \inf_{\O}\frac{u^s(s+1,\.)}{u_P(s+1,\.)}\leq
C\frac{\|u^s(s+1,\.)\|_\infty}{\| u_P(s+1,\.)\|_\infty}.$$
One has by comparison that 
$$\|u^s(s+1,\.)\|_\infty\leq \|u_0\|_\infty e^{\|c\|_\infty}.$$
Furthermore, using that $u_0$ is continuous and strictly positive somewhere,
one easily sees that there exists a constant $K>0$
 independent of $s$ such that $\|u^s(s+1,\.)\|_\infty\geq K$.
 Summing up, there exists $K'>0$ such that
 $$\forall s\in\R, \ x\in\O,\quad
 \frac{1}{K'\| u_P(s+1,\.)\|_\infty}u_P(s+1,x)
\leq u^s(s+1,x)\leq \frac{K'}{\| u_P(s+1,\.)\|_\infty}u_P(s+1,x).$$
Therefore, the comparison principle eventually yields
 $$\forall s\in\R, \ x\in\O, \ t\geq s+1,\quad
 \frac{1}{K'\| u_P(s+1,\.)\|_\infty}u_P(t,x)
\leq u^s(t,x)\leq \frac{K'}{\| u_P(s+1,\.)\|_\infty}u_P(t,x).$$
For $s\leq-1$, we apply these estimates at $t=0$ and get
 $$\forall s\leq-1, \ x\in\O,\quad
 \frac{1}{K'\| u_P(s+1,\.)\|_\infty}u_P(0,x)
\leq u^s(0,x)\leq \frac{K'}{\| u_P(s+1,\.)\|_\infty}u_P(0,x).$$
Finally, we derive
$$\forall s\leq-1, \ x\in\O,\quad
\big|\ln u^s(0,x)+
 \ln \| u_P(s+1,\.)\|_\infty-\ln u_P(0,x)\big|\leq 
 \ln K'.$$
 The results then follow from Theorem \ref{thm:caraclambda}.
\end{proof}

\begin{proof}[Proof of Theorem \ref{thm:MP}.]
	The fact that that the validity of the {\em MP} implies $\mu_b(\R^-)\geq0$
 is straightforward, because $\mu_b(\R^-)<0$
	implies the existence of an ancient, bounded, 
 subsolution $\phi$ which violates the {\em MP}.
 
	Let us turn to the reverse implication.
	Suppose that there exists an ancient, bounded, 
 subsolution~$u$ to \eqref{Pu=0} that violates the~{\em MP}, that is, such that 
 $u(\bar t,\bar x)>0$ for some $\bar t<0$ and $\bar x\in\O$.
 It follows from the comparison principle that $\max_\O u(t,\.)>0$ for any $t<\bar t$.
        For $T<\bar t$, let $v_T$ be the solution to the problem \eqref{Pu=0} for $t>T$,
        with initial datum 
        $v_T(T,\.)=\max\{u(T,\.),0\}$.
        Then, by comparison, we get
        $$\forall t\geq T,\ x\in\O,\quad
        u(t,x)\leq v_T(t,x)\leq \|u\|_{L^\infty(\R^-\times\O)}e^{\|c\|_\infty(t-T)},$$
        and moreover $v_T(t,x)>0$ for $t>T$ and $x\in\O$.
        Let us call $K:=\|u\|_{L^\infty(\R^-\times\O)}e^{\|c\|_\infty}$.
Then, by Theorem \ref{thm:Harnack}, there exists a positive constant $C$ such that,
for any $T<0$, it holds
 $$\forall x\in\O,\quad
\frac{ v_T(T+1,x)}{u_P(T+1,x)}\leq C
\inf_{\O}\frac{ v_T(T+1,\.)}{u_P(T+1,\.)}\leq
C\frac{K}{\| u_P(T+1,\.)\|_\infty}.$$
It then follows, again by comparison, that, for any $T<0$,
$$\forall t\in[T+1,0),\ x\in\O,\quad
u(t,x)\leq v_T(t,x)\leq
C\frac{K}{\| u_P(T+1,\.)\|_\infty}u_P(t,x).$$
But, if by contradiction one had $\mu_b(\R^-)>0$, then 
Theorem \ref{thm:caraclambda} would imply the existence of
a sequence $\seq{T}$ diverging to $-\infty$ such that 
$\| u_P(T_n+1,\.)\|_\infty\to+\infty$ as $n\to+\infty$,
and therefore the above inequality would imply
$u(t,x)\leq0$ for all $t<0$, $x\in\O$. This is a contradiction.
\end{proof}


\subsection{Proofs of the results on  limit operators}\label{sec:proofopelim}

\begin{proof}[Proof of Theorem \ref{P*}.] We give the proof of the identities
stated in the theorem in the case of $\R^+$. The case of $\R^-$ follows from the 
same arguments, with minor modifications.

Consider an arbitrary limit operator $P^*\in \omega_{\R^+}(P)$ and let
$(t_{n})_{n\in\N}$ in $\R^+$ be the associated sequence. By parabolic estimates, the functions $u_P(\.+t_n,\.)/u_P(t_n,\.)$ are bounded in $W^{1,2}_{p,loc}(I\times\ol\O)$ for any $p\in (N+1,\infty)$. Hence, 
up to subsequences, these functions  converge weakly-$\star$ in $W^{1,2}_{p,loc}(I\times\ol\O)$ for any $p\in (N+1,\infty)$, and, using the Aubin-Lions lemma, strongly in $W^{0,1}_{p,loc}(I\times\ol\O)$ as $n\to+\infty$, to a function $u$. We could thus pass to the limit in the equation and get that $u$ is a positive solution to $P^* u=0$ in $\R\times\O$, vanishes on
$\R\times\partial\O$, and satisfies
$\|u(0,\.)\|_{L^\infty(\O)}=1$, namely, $u=u_{P^*}$ given by
Theorem \ref{thm:uP}.
It follows from Theorem \ref{thm:caraceta} that, on the one hand,
\[\begin{split}
-\mu_p(P^*,\R^+) &=
\liminf_{t\to+\infty}\frac1t \ln \| u_{P^*} (t,\cdot)\|_{L^\infty(\O)}\\
&
=\liminf_{t\to+\infty}\Big(\limn \frac1t\big(\ln \| u_P(t_n+t,\cdot)\|_{L^\infty(\O)}-
\ln \| u_P(t_n,\cdot)\|_{L^\infty(\O)}\big)\Big),
\end{split}\]
and the last term is greater than or equal to $-\mu_{b,p}(P,\R^+)$ 
by Theorem \ref{thm:caracmu}.
On the other hand, from Theorems \ref{thm:caraceta} and \ref{thm:caracmu},
we infer
\[-\lambda_b(P^*,\R^+) 
=\limsup_{t\to+\infty}\Big(\limn \frac1t\big(\ln \| u_P(t_n+t,\cdot)\|_{L^\infty(\O)}-
\ln \| u_P(t_n,\cdot)\|_{L^\infty(\O)}\big)\Big)
\leq-\lambda_{b,p}(P,\R^+).
\]
We have thereby shown that
\[\mu_{b,p}(P,\R^+)\geq\sup_{P^*\in\o_{\R^+}(P)}\mu_p(P^*,\R^+)
\quad\text{ and }\quad
\lambda_{b,p}(P,\R^+)\leq\inf_{P^*\in\o_{\R^+}(P)}\lambda_b(P^*,\R^+).\]

We know from Theorem \ref{thm:caraceta} 
that $\mu_p(P^*,\R^+)\geq\lambda_b(P^*,\R^+)$, for any operator $P^*$, therefore 
to conclude the proof we need to find some limit operators $P_1^*$ and $P_2^*$
for which $\mu_{b,p}(P,\R^+)=\lambda_b(P^*_1,\R^+)$ and
$\lambda_{b,p}(P,\R^+)=\mu_p(P^*_2,\R^+)$.

Let $I=\R^-$ or $I=\R^+$.
We claim that for all $n\in \mathbb{N}$, there exists $s_{n}\in I$ 
satisfying $s_{n}+n\in I$, with the following~property: 
\Fi{sn}
\forall t\in [1,n],\quad
\frac{1}{t} \Big(\ln \|u_{P} (s_{n}+t,\cdot)\|_{L^{\infty}(\O)} - 
\ln \| u_{P} (s_{n},\cdot)\|_{L^\infty(\O)} \Big)\leq 
- \mu_{b,p}(P,I)+\frac{1}{n}.
\Ff
The proof of this claim is similar to that of \cite[Proposition~4.4]{NR2}. 
Assume that it fails for some $n\in \mathbb{N}$. 
By Theorem \ref{thm:caracmu}, for $K\in \N$ large enough, 
there exists $\tau \in I$, depending on $K$,  such that $\tau+Kn \in I$ and 
\Fi{Kn}
\frac{1}{Kn}\Big(\ln \|u_{P} (\tau +Kn,\cdot)\|_{L^{\infty}(\O)} - 
\ln \| u_{P} (\tau,\cdot)\|_{L^\infty(\O)} \Big) <- \mu_{b,p}(P,I)
+\frac{1}{2n}.
\Ff
On the other hand, by the contradictory assumption,
there exists  $t_{0}\in [1,n]$ such that $\tau+t_{0}\in I$ and 
$$\frac{1}{t_{0}}\Big(\ln \|u_{P} (\tau+t_{0},\cdot)\|_{L^{\infty}(\O)} - 
\ln \| u_{P} (\tau,\cdot)\|_{L^\infty(\O)} \Big)>
- \mu_{b,p}(P,I)+\frac{1}{n}.$$
Then, there is $t_{1}\in [1,n]$ such that $\tau+t_{0}+t_{1}\in I$ and 
$$\frac{1}{t_{1}}\Big(\ln \|u_{P} (\tau+t_{0}+t_{1},\cdot)\|_{L^{\infty}(\O)} - 
\ln \| u_{P} (\tau+t_{0},\cdot)\|_{L^\infty(\O)}\Big) >
- \mu_{b,p}(P,I)+\frac{1}{n}$$
and thus:
$$n\Big(\ln \|u_{P} (\tau+t_{0}+t_{1},\cdot)\|_{L^{\infty}(\O)} - 
\ln \| u_{P} (\tau,\cdot)\|_{L^\infty(\O)} \Big)>
- n (t_{0}+t_{1})\mu_{b,p}(P,I)+t_{0}+t_{1}.$$
We continue until we find $\bar t\in [1,n]$ such that 
$$n\Big(\ln \|u_{P} (\tau+Kn+\bar t,\cdot)\|_{L^{\infty}(\O)} - 
\ln \| u_{P} (\tau,\cdot)\|_{L^\infty(\O)}\Big) >
- n(Kn+\bar t)\mu_{b,p}(P,I)+Kn+\bar t.$$
It follows that 
$$\begin{array}{l} 
\ln \|u_{P} (\tau+Kn,\cdot)\|_{L^{\infty}(\O)} - 
\ln \| u_{P} (\tau,\cdot)\|_{L^\infty(\O)} \\
>\ln \|u_{P} (\tau+Kn,\cdot)\|_{L^{\infty}(\O)} - 
\ln \| u_{P} (\tau+Kn+\bar t,\cdot)\|_{L^\infty(\O)}
- (Kn+\bar t)\mu_{b,p}(P,I)+K+\bar t/n.
\end{array}$$
Then, using the fact that 
$$\| u_{P} (\tau+Kn+\bar t,\cdot)\|_{L^\infty(\O)}
\leq \| u_{P} (\tau+Kn,\cdot)\|_{L^\infty(\O)}e^{\|c\|_\infty \bar t},$$
thanks to the comparison principle, 
and that $\bar t\leq n$, one eventually derives
$$\ln \|u_{P} (\tau+Kn,\cdot)\|_{L^{\infty}(\O)} - 
\ln \| u_{P} (\tau,\cdot)\|_{L^\infty(\O)}>
-n\|c\|_\infty- (Kn+n)\mu_{b,p}(P,I)+K.$$
Choosing $K$ large enough, one obtains a contradiction with \eqref{Kn}. 



The claim \eqref{sn} is proved. 
Consider now $I=\R^+$ and the limit operator $P^*_1$ of $P$ associated with 
(a subsequence of) $(s_{n})_{n\in\N}$. 
We have already observed that (up to subsequences)
$$\frac{u_{P}(\cdot+s_{n},\cdot)}{\|u_{P}(s_{n},\cdot)\|_{L^{\infty}(\O)}} \to u_{P^*_1} \quad \hbox{ as } n\to +\infty \hbox{ \ in } \;
W^{1,2}_{p,loc}(\R\times\ol\O).$$
It follows from \eqref{sn} that
$$\forall t\geq 1, \quad \frac{1}{t} \ln \|u_{P^*_1} (t,\cdot)\|_{L^{\infty}(\O)}\leq 
- \mu_{b,p}(P,I)$$
and thus, taking the $\limsup_{t\to +\infty}$ of both sides, we deduce from Theorem \ref{thm:caraceta} that
$\mu_{b,p}(P,\R^+)\leq \lambda_b(P^*_1,\R^+)$.
In an analogous way one finds another limit operator $P^*_2$ such that 
$\lambda_{b,p}(P,\R^+)\geq \mu_b(P^*_2,\R^+)$.
%
The conclusion follows by collecting all the inequalities. 
\end{proof}




%
%
%
%
%
%
%
%
%
%
%
%
%
%
%
%
%
%
%
%
%
%
%
%
%

%
%
%
%
\end{document}